\DeclareMathAlphabet{\mathbbold}{U}{bbold}{m}{n}	
\theoremstyle{plain}
\newtheorem{theorem}{Theorem}
\newtheorem*{theorem*}{Theorem}
\newtheorem{prop}[theorem]{Proposition}
\newtheorem{cor}[theorem]{Corollary}
\newtheorem{lemma}[theorem]{Lemma}
\newtheorem*{lemma*}{Lemma}
\newtheorem{theoremappendix}{Theorem}[section]
\newtheorem{lemmaappendix}[theoremappendix]{Lemma}
\theoremstyle{definition}
\newtheorem{definition}[theorem]{Definition}
\newtheorem{definitionappendix}[theoremappendix]{Definition}
\theoremstyle{remark}
\newtheorem{rmk}[theorem]{Remark}
\newtheorem{rmkappendix}[theoremappendix]{Remark}
\setlist[itemize]{leftmargin=1cm}
\newcommand{\al}{\alpha}
\newcommand{\mc}[1]{\mathcal{#1}}
\newcommand{\mis}{\mathsf{m}}	
\newcommand{\R}{\mathbb{R}}
\newcommand{\N}{\mathbb{N}}
\newcommand{\distr}{\mathcal{D}}
\newcommand{\eps}{\varepsilon}
\newcommand{\lam}{\lambda}
\newcommand{\J}{\mathcal{J}}
\newcommand{\g}{\gamma}
\newcommand{\wt}{\widetilde}
\newcommand{\cut}{\mathrm{Cut}}
\newcommand{\ver}{\mathcal{V}}
\newcommand{\Vr}{\overline{V}}
\newcommand{\Rr}{R_\mis^N(t)}
\newcommand{\y}{D}		
\renewcommand{\k}{\kappa}
\newcommand{\f}{\mathsf{f}}
\DeclareMathOperator{\tr}{\mathrm{Tr}}
\DeclareMathOperator{\spn}{\mathrm{span}}
\DeclareMathOperator{\rank}{\mathrm{rank}}
\DeclareMathOperator{\diag}{\mathrm{diag}}
\DeclareMathOperator{\Ric}{\mathfrak{Ric}}		
\DeclareMathOperator{\Sec}{\mathrm{Sec}}		
\DeclareMathOperator{\Rcan}{\mathfrak{R}}
\newcommand{\lev}{\alpha}						
\newcommand{\DD}{\mathcal{F}}					
\newcommand{\tanf}{\mathsf{T}}					
\newcommand{\Tor}{\mathrm{Tor}}
\newcommand{\vol}{\mathrm{vol}}
\newcommand{\ganf}{\dot \gamma}
\let\oldtocsection=\tocsection 
\let\oldtocsubsection=\tocsubsection 
\let\oldtocsubsubsection=\tocsubsubsection
\renewcommand{\tocsection}[2]{\hspace{0em}\oldtocsection{#1}{#2}}
\renewcommand{\tocsubsection}[2]{\hspace{1em}\oldtocsubsection{#1}{#2}}
\renewcommand{\tocsubsubsection}[2]{\hspace{2em}\oldtocsubsubsection{#1}{#2}}
\author[Davide Barilari]{Davide Barilari$^\flat$}
\address{$^\flat$ Institut de Math\'ematiques de Jussieu-Paris Rive Gauche, UMR CNRS 7586, Universit\'e Paris-Diderot, Batiment Sophie Germain, Case 7012, 75205 Paris Cedex 13, France}
\email{\href{mailto:davide.barilari@imj-prg.fr}{davide.barilari@imj-prg.fr}}
\author[Luca Rizzi]{Luca Rizzi$^\sharp$}
\address{$^\sharp$ Univ. Grenoble Alpes, IF, F-38000 Grenoble, France \newline 
CNRS, IF, F-38000 Grenoble, France}
\email{\href{mailto:luca.rizzi@univ-grenoble-alpes.fr}{luca.rizzi@univ-grenoble-alpes.fr}}
\title[Bakry-\'Emery curvature and model spaces in SR geometry]{Bakry-\'Emery curvature and model spaces in sub-Riemannian geometry}
\subjclass[2010]{53C17, 49J15}
\date{\today}
\begin{document}

\begin{abstract}
We prove comparison theorems for the sub-Riemannian distortion coefficients appearing in interpolation inequalities. These results, which are equivalent to a sub-Laplacian comparison theorem for the sub-Riemannian distance, are obtained by introducing a suitable notion of sub-Riemannian Bakry-Émery curvature. The model spaces for comparison are variational problems coming from optimal control theory.
As an application we establish the sharp measure contraction property for 3-Sasakian manifolds satisfying a suitable curvature bound.
\end{abstract}

\maketitle

\setcounter{tocdepth}{1}
\tableofcontents

\section{Introduction}

Interpolation inequalities connect different areas of mathematics such as optimal transport, functional inequalities and geometric analysis. Typical examples are the so-called Borell-Brascamp-Lieb inequality, and its geometrical counterpart: the Brunn-Minkowski one. We refer to \cite{Gardner} for a survey of the topic.

A geodesic version of these inequalities has been proved for Riemannian manifolds in the seminal paper \cite{CEMS-interpolations}, provided that the geometry is taken into account through appropriate distortion coefficients.
The main result of \cite{CEMS-interpolations}, written in the form of a Borell-Brascamp-Lieb inequality, reads as follows.
\begin{theorem} \label{t:srbbl-intro}
Let $(M,g)$ be a $n$-dimensional Riemannian manifold, equipped with a smooth measure $\mis$. Fix $t\in [0,1]$. Let $f,g,h:M\to \R$ be non-negative and $A,B\subset M$ be Borel subsets such that $\int_{A}f \,d\mis=\int_{B}g \,d\mis=1$.  Assume that for every $(x,y)\in (A\times B)\setminus \cut(M)$  and $z\in Z_{t}(x,y)$, it holds
\begin{equation}\label{eq:asscems}
\frac{1}{h(z)^{1/n}}\leq \left(\frac{\beta_{1-t}(y,x)}{f(x)}\right)^{1/n}+  \left(\frac{\beta_{t}(x,y)}{g(y)}\right)^{1/n}.
\end{equation}
Then $\int_{M} h\, d\mis \geq 1$.
\end{theorem}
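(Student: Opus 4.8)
The plan is to prove this by optimal transportation with quadratic cost, deforming $\mu_0:=f\,d\mis$ into $\mu_1:=g\,d\mis$ along the displacement interpolation and using a concavity (Jacobian) bound for the interpolating maps --- the Brenier--McCann strategy on manifolds. First I would reduce, by inner regularity and truncation, to the case where $f,g$ are bounded with compact support, and replace them by their restrictions to $A$ and $B$, so that $\mu_0,\mu_1$ become probability measures. By the Brenier--McCann theorem there is a $c$-concave potential $\psi$ (for $c=\tfrac12 d^2$) such that $T(x):=\exp_x(\nabla\psi(x))$ is the optimal map with $T_\#\mu_0=\mu_1$, and for $\mu_0$-a.e.\ $x$ the pair $(x,T(x))$ avoids $\cut(M)$, the minimizing geodesic joining them being $s\mapsto T_s(x):=\exp_x(s\nabla\psi(x))$; in particular $T_t(x)\in Z_t(x,T(x))$. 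Set $\mu_t:=(T_t)_\#\mu_0$.

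Next I would set up the change of variables. Since $\psi$ is semiconcave it is twice differentiable $\mis$-a.e.\ (Aleksandrov), so each $T_t$ is approximately differentiable a.e.\ with a well-defined nonnegative $\mis$-Jacobian $\mc J_t$ for which the area formula holds; for $t\in(0,1)$ and $(x,T(x))\notin\cut(M)$ one has $\mc J_t(x)>0$. The Monge--Amp\`ere equation reads $\mc J_1(x)=f(x)/g(T(x))$ for $\mu_0$-a.e.\ $x$. The geometric heart of the argument is the pointwise estimate
\begin{equation*}
  \mc J_t(x)^{1/n}\;\geq\;\beta_{1-t}(y,x)^{1/n}+\beta_t(x,y)^{1/n}\,\mc J_1(x)^{1/n},\qquad y:=T(x),
\end{equation*}
valid for $\mu_0$-a.e.\ $x$: it comes from analysing the Jacobi equation along $s\mapsto T_s(x)$, factoring out the volume distortion of the exponential map along the geodesic from $x$ to $y$ (which is exactly what produces $\beta_{1-t}(y,x)$ near $x$ and $\beta_t(x,y)$ near $y$) and applying the Minkowski determinant inequality to the remaining symmetric positive-semidefinite part --- this being precisely the property characterizing the distortion coefficients.

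To conclude, fix $t$ (the cases $t\in\{0,1\}$ are immediate, so take $t\in(0,1)$). By non-branching of minimizing geodesics, $T_t$ is injective on a full $\mu_0$-measure set $A'\subseteq A$ on which everything above holds and moreover $T(x)\in B$ and $f(x)>0$. For $x\in A'$ put $y=T(x)$ and $z=T_t(x)\in Z_t(x,y)$ with $(x,y)\notin\cut(M)$; substituting $g(y)=f(x)/\mc J_1(x)$ into hypothesis \eqref{eq:asscems} and then using the Jacobian estimate,
\begin{equation*}
  h(z)^{-1/n}\;\leq\;\frac{1}{f(x)^{1/n}}\Bigl(\beta_{1-t}(y,x)^{1/n}+\beta_t(x,y)^{1/n}\,\mc J_1(x)^{1/n}\Bigr)\;\leq\;\frac{\mc J_t(x)^{1/n}}{f(x)^{1/n}},
\end{equation*}
so $h(T_t(x))\,\mc J_t(x)\geq f(x)$. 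Integrating with the area formula and using $h\geq 0$,
\begin{equation*}
  \int_M h\,d\mis\;\geq\;\int_{T_t(A')}h\,d\mis\;=\;\int_{A'}h(T_t(x))\,\mc J_t(x)\,d\mis(x)\;\geq\;\int_{A'}f\,d\mis\;=\;\int_A f\,d\mis\;=\;1.
\end{equation*}

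I expect the main obstacle to be the second paragraph: on the one hand, the regularity theory needed to make the change of variables with Jacobian $\mc J_t$ rigorous (second-order differentiability of $c$-concave functions and the area formula for the associated maps), and on the other hand the proof of the Jacobian concavity inequality with precisely the coefficients $\beta_t$ --- the step where the full geometry enters --- together with the careful bookkeeping of the exceptional null sets (non-twice-differentiability of $\psi$, the cut locus, possible branching, and $\{f=0\}$). A minor additional point is to check measurability of $z\mapsto h(z)$ throughout the reductions of the first step.
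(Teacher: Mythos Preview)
The paper does not prove this statement at all: Theorem~\ref{t:srbbl-intro} is quoted in the introduction as ``the main result of \cite{CEMS-interpolations}, written in the form of a Borell--Brascamp--Lieb inequality,'' and serves purely as background motivation for the sub-Riemannian comparison theorems that follow. There is therefore no ``paper's own proof'' to compare against.

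That said, your outline is precisely the Cordero-Erausquin--McCann--Schmucken\-schl\"ager strategy from the cited reference: Brenier--McCann optimal transport with cost $c=\tfrac12 d^2$, displacement interpolation $T_t$, the Aleksandrov second-order differentiability of the $c$-concave potential, and the Jacobian concavity inequality $\mc J_t^{1/n}\geq \beta_{1-t}(y,x)^{1/n}+\beta_t(x,y)^{1/n}\mc J_1^{1/n}$ obtained by factoring out volume distortion and applying Minkowski's determinant inequality. Your identification of the delicate points (regularity for the area formula, the null sets coming from the cut locus and from $\{f=0\}$, and the precise derivation of the Jacobian bound with exactly these $\beta_t$'s) is accurate; these are indeed the places where the original paper does the real work.
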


Here $\cut(M)$ denotes the \emph{cut locus} of $(M,g)$, defined as the complement of the subset of $M\times M$ where the squared Riemannian distance is smooth. For $t \in [0,1]$, we denote by $Z_{t}(x,y)$  the set of $t$-intermediate points of geodesics between $x$ and $y$, and by $\beta_t(x,y)$ the \emph{distortion coefficient}
\begin{equation}
\beta_{t}(x,y)=\limsup_{r\to 0}\frac{\mis(Z_{t}(x,\mathcal{B}_{r}(y)))}{\mis(\mathcal{B}_{r}(y))}, \qquad t \in [0,1],
\end{equation}
where $\mathcal{B}_{r}(y)$ denotes the Riemannian ball centred at $y$ of radius $r>0$.  

Distortion coefficients are in general difficult to compute. However, if the Ricci curvature $\mathrm{Ric}_g$ of $M$ is bounded from below, then they can be controlled in terms of the distortion coefficients of suitable Riemannian model spaces.
\begin{theorem} \label{t:comp1}
Let $(M,g)$ be a $n$-dimensional Riemannian manifold, equipped with the Riemannian measure $\mis=\mathrm{vol}_{g}$. Assume that there exists $K\in\R$ such that $\mathrm{Ric}_g(v) \geq (n-1)K$ for every unit vector $v \in TM$. Then for all $t \in [0,1]$ we have
\begin{equation}\label{eq:dist-ref-riem-intro}
\beta_t(x,y) \geq \beta_t^{(K,n)}(x,y), 
\end{equation}
where $\beta_t^{(K,n)}(x,y)$ is the distortion coefficient of the simply connected Riemannian manifold of dimension $n$ and constant sectional curvature $K$.
\end{theorem}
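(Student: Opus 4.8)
The plan is to pass from the volumetric definition of $\beta_t$ to a Jacobi-field expression along the unique minimizing geodesic between $x$ and $y$, and then run the classical Bishop-type differential inequality together with a one-variable Sturm comparison forced by the Ricci lower bound.

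\emph{Step 1: Jacobi-field representation of $\beta_t$.} Fix $(x,y)\notin\cut(M)$, put $\ell=d(x,y)$, and let $\sigma\colon[0,\ell]\to M$ be the unit-speed minimizing geodesic with $\sigma(0)=x$, $\sigma(\ell)=y$; since $(x,y)\notin\cut(M)$ the map $\exp_x$ is a diffeomorphism near $v:=\exp_x^{-1}(y)$, $|v|=\ell$, and $y$ is not conjugate to $x$ along $\sigma$. For $z$ near $y$ one has $Z_t(x,z)=\exp_x\!\big(t\exp_x^{-1}(z)\big)$, so $Z_t(x,\mathcal B_r(y))=\exp_x(tU_r)$ with $U_r:=\exp_x^{-1}(\mathcal B_r(y))$; letting $r\to0$ in the defining ratio and using that $\exp_x$ is a local diffeomorphism (so the $\limsup$ is a genuine limit) yields
\[
\beta_t(x,y)=t^n\,\frac{|\det d(\exp_x)_{tv}|}{|\det d(\exp_x)_{v}|}.
\]
By the Gauss lemma, $|\det d(\exp_x)_{\tau v/\ell}|=\tau^{-(n-1)}\det\mathcal C(\tau)$, where $\mathcal C(\tau)$ is the $(n-1)\times(n-1)$ matrix Jacobi field along $\sigma$ with $\mathcal C(0)=0$, $\dot{\mathcal C}(0)=I$, solving $\ddot{\mathcal C}+\mathcal R(\tau)\mathcal C=0$ with $\mathcal R(\tau)$ the curvature operator $w\mapsto R(w,\dot\sigma)\dot\sigma$ on $\dot\sigma^{\perp}$; note $\mathrm{tr}\,\mathcal R(\tau)=\mathrm{Ric}_g(\dot\sigma(\tau))\ge(n-1)K$, and $\det\mathcal C>0$ on $(0,\ell]$ (no conjugate points). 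Setting $\delta:=(\det\mathcal C)^{1/(n-1)}$, this gives
\[
\beta_t(x,y)=t\,\frac{\det\mathcal C(t\ell)}{\det\mathcal C(\ell)}=t\left(\frac{\delta(t\ell)}{\delta(\ell)}\right)^{n-1},
\]
and the identical computation in the space form of sectional curvature $K$ produces $\beta_t^{(K,n)}(x,y)=t\,(\phi(t\ell)/\phi(\ell))^{n-1}$, where $\phi$ solves $\ddot\phi+K\phi=0$, $\phi(0)=0$, $\dot\phi(0)=1$; moreover $\phi>0$ on $(0,\ell]$ (automatic for $K\le0$, and for $K>0$ because $\ell<\pi/\sqrt K$ outside $\cut(M)$).

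\emph{Step 2: Bishop inequality and Sturm comparison.} On $(0,\ell)$ set $V:=\dot{\mathcal C}\mathcal C^{-1}$; it is symmetric and satisfies the matrix Riccati equation $\dot V+V^2+\mathcal R=0$. Since $(\log\delta)'=\tfrac1{n-1}\mathrm{tr}\,V$, differentiating and using $\mathrm{tr}(V^2)\ge\tfrac1{n-1}(\mathrm{tr}\,V)^2$ (Cauchy--Schwarz, $V$ symmetric) together with $\mathrm{tr}\,\mathcal R\ge(n-1)K$ gives $(\log\delta)''\le-K-((\log\delta)')^2$, hence $\delta''+K\delta\le0$ on $(0,\ell)$, with $\delta(0)=0$, $\dot\delta(0)=1$. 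The model function $\phi$ satisfies these relations with equality. Now consider $w:=\delta\dot\phi-\dot\delta\,\phi$: then $w(0)=0$ and $\dot w=-\phi(\delta''+K\delta)\ge0$ on $(0,\ell)$, so $w\ge0$ and $(\delta/\phi)'=-w/\phi^2\le0$; thus $\tau\mapsto\delta(\tau)/\phi(\tau)$ is non-increasing on $(0,\ell]$ (with value $1$ at $0$). As $t\ell\le\ell$, this yields $\delta(t\ell)/\delta(\ell)\ge\phi(t\ell)/\phi(\ell)$, and raising to the power $n-1$ and multiplying by $t$ gives $\beta_t(x,y)\ge\beta_t^{(K,n)}(x,y)$.

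I expect the main (if largely routine) obstacle to be Step 1: carefully justifying the change of variables for $\exp_x$, identifying its Jacobian with $\det\mathcal C$ via the Gauss lemma, and checking that the $\limsup$ in the definition of $\beta_t$ is an honest limit on the complement of $\cut(M)$. The only genuinely geometric input sits in the Cauchy--Schwarz step of Step 2, where the Ricci lower bound enters through the trace of the curvature operator; everything downstream is one-variable ODE comparison. It is also worth recording the closed forms $\phi(\tau)=\sin(\sqrt K\,\tau)/\sqrt K$, $\tau$, or $\sinh(\sqrt{-K}\,\tau)/\sqrt{-K}$ according to the sign of $K$, so that $\beta_t^{(K,n)}$ coincides with the explicit distortion coefficients of the space forms, and to treat separately (trivially) the positivity of $\phi$ when $K\le0$.
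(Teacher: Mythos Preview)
Your proof is correct and follows the same core strategy as the paper: both compute $\beta_t$ through a matrix Riccati equation for Jacobi fields, trace it using Cauchy--Schwarz to obtain a scalar differential inequality, and compare with the constant-curvature model. The only cosmetic differences are that you remove the direction of motion at the outset via the Gauss lemma (the paper does this as a separate factorization, cf.\ Theorem~\ref{t:srbericci-sharp} and Remark~\ref{rmk:formally}), and you finish with a Wronskian-based Sturm argument where the paper invokes its Riccati comparison machinery (Appendix~\ref{s:riccati}).
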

The model coefficients $\beta_t^{(K,n)}(x,y)$ depend only on the distance between $x$ and $y$, and are given explicitly by the following formula:
\begin{equation}\label{eq:coeffespliciti}
\beta_t^{(K,n)}(x,y) = \begin{cases}
t \left(\frac{\sin(t \alpha)}{\sin( \alpha)}\right)^{n-1} & \text{if } K>0, \\ 
t^n & \text{if } K=0 , \\
t\left(\frac{\sinh(t  \alpha)}{\sinh(  \alpha)}\right)^{n-1} & \text{if } K<0,
\end{cases} \qquad \alpha = \sqrt{|K|}d(x,y).
\end{equation}

\medskip
Inequality \eqref{eq:asscems}, with the $\beta$ given by the reference coefficients \eqref{eq:coeffespliciti}, is one of the incarnations of the so-called curvature-dimension condition $\mathrm{CD}(K,N)$, which allows to generalize the concept of Ricci curvature bounded from below (by $K\in \R$) and dimension bounded from above (by $N>1$), to general metric measure spaces.
This is the starting point of the synthetic approach to curvature bounds of Lott-Sturm-Villani \cite{LV-ricci,S-ActaI,S-ActaII} and extensively developed subsequently.

When the Riemannian manifold $(M,g)$ is endowed with an arbitrary smooth measure $\mis=e^{-\psi}\mathrm{vol}_{g}$, where $\psi:M\to \R$ is a smooth function, one should bound, instead, the so-called Bakry-Émery Ricci tensor of parameter $N>n$, defined for every unit vector $v \in TM$ as follows
\begin{equation}\label{eq:beriem}
\mathrm{Ric}_\mis^N(v)=\mathrm{Ric}_{g}(v)+\nabla^{2}\psi(v,v)-\frac{g(\nabla \psi,v)^{2}}{N-n},
\end{equation}
where $\nabla^{2}\psi$ denotes the Riemannian Hessian of $\psi$. The original Bakry-Émery Ricci tensor was introduced for Riemannian manifolds in \cite{Be85} and $N=\infty$ (see also \cite{AGS-BE} for general metric measure spaces). One has then the following result. An equivalent statement can be found in \cite[Appendix A]{WW}.
\begin{theorem} \label{t:comp2}
Let $(M,g)$ be a $n$-dimensional Riemannian manifold, equipped with a smooth volume $\mis$. Assume that there exists $K\in \R$ and $N>n$ such that $\mathrm{Ric}_\mis^N(v)\geq  (N-1)K$ for every unit vector $v\in TM$. Then for all $t \in [0,1]$ we have 
\begin{equation}\label{eq:dist-ref-riem-intro2}
\beta_t(x,y) \geq \beta_t^{(K,N)}(x,y),
\end{equation}
where $\beta_t^{(K,N)}(x,y)$ is defined as in \eqref{eq:coeffespliciti}.
\end{theorem}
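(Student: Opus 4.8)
The plan is to follow the classical Riemannian scheme (the weighted analogue of the argument behind Theorem~\ref{t:comp1}, cf.\ \cite{WW}): reduce the distortion coefficient to a Jacobian computed via Jacobi fields along a fixed minimizing geodesic, extract from the Bakry--Émery bound a one-dimensional differential inequality for a suitable power of that Jacobian, and close with a Sturm-type comparison against the model. The cases $t\in\{0,1\}$ are trivial, so I fix $t\in(0,1)$ and $(x,y)\notin\cut(M)$, write $\mis=e^{-\psi}\vol_g$, and let $\gamma\colon[0,1]\to M$ be the unique minimizing geodesic from $x$ to $y$, of speed $\ell=d(x,y)$. Since $(x,y)\notin\cut(M)$, $\gamma$ has no conjugate points on $(0,1]$, so the $s$-interpolation map $y'\mapsto Z_s(x,y')$ is a local diffeomorphism near $y$ for each $s\in(0,1]$, and a standard Jacobi-field computation of its Jacobian gives $\beta_s(x,y)=\mathcal J_\mis(s)/\mathcal J_\mis(1)$, where $\mathcal J_\mis(s)=e^{-\psi(\gamma(s))}\lvert\det A(s)\rvert$ and $A(s)$ is the matrix, in a parallel orthonormal frame, of the Jacobi fields along $\gamma$ vanishing at $\gamma(0)$ with $A'(0)=I$. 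Factoring out the tangential Jacobi field, which is orthogonal to the rest and has norm exactly $s$, gives $\lvert\det A(s)\rvert=s\,\lvert\det A^\perp(s)\rvert$ with $A^\perp$ the $(n-1)\times(n-1)$ block of transverse Jacobi fields; hence, setting $G(s):=e^{-\psi(\gamma(s))}\lvert\det A^\perp(s)\rvert$ (positive on $(0,1]$),
\[
\beta_s(x,y)=s\,\frac{G(s)}{G(1)}.
\]

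The core step is to show that $y(s):=G(s)^{1/(N-1)}$ satisfies $y''+K\ell^2 y\le 0$ on $(0,1]$, with $y>0$ there, $y(0^+)=0$, and $y(s)\sim c\,s^{(n-1)/(N-1)}$ (exponent $<1$, $c>0$) as $s\to 0^+$. To get this I would let $B^\perp=(A^\perp)'(A^\perp)^{-1}$, which is symmetric and solves $(B^\perp)'+(B^\perp)^2+R^\perp=0$, where $R^\perp$ is the restriction to $\dot\gamma^\perp$ of the Jacobi operator $\mathcal R(\cdot,\dot\gamma)\dot\gamma$, with $\operatorname{tr}R^\perp=\mathrm{Ric}_g(\dot\gamma)$. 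Differentiating $\log G=-\psi\circ\gamma+\log\lvert\det A^\perp\rvert$ twice, using Jacobi's formula, the bound $\operatorname{tr}(B^\perp)^2\ge(\operatorname{tr}B^\perp)^2/(n-1)$, and $\tfrac{d^2}{ds^2}(\psi\circ\gamma)=\nabla^2\psi(\dot\gamma,\dot\gamma)$, one obtains for $w:=(\log G)'=\operatorname{tr}B^\perp-g(\nabla\psi,\dot\gamma)$ the inequality
\[
w'\le -\frac{\bigl(w+g(\nabla\psi,\dot\gamma)\bigr)^2}{n-1}-\mathrm{Ric}_g(\dot\gamma)-\nabla^2\psi(\dot\gamma,\dot\gamma).
\]
Now I feed in the hypothesis: applying $\mathrm{Ric}_\mis^N(v)\ge(N-1)K$ to $v=\dot\gamma/\ell$ and using the $2$-homogeneity of $\mathrm{Ric}_\mis^N$ together with \eqref{eq:beriem} gives $\mathrm{Ric}_g(\dot\gamma)+\nabla^2\psi(\dot\gamma,\dot\gamma)\ge(N-1)K\ell^2+g(\nabla\psi,\dot\gamma)^2/(N-n)$; combined with the elementary inequality $\tfrac{(w+p)^2}{n-1}+\tfrac{p^2}{N-n}\ge\tfrac{w^2}{N-1}$ (Cauchy--Schwarz, writing $w=(w+p)-p$), this collapses to $w'\le-\tfrac{w^2}{N-1}-(N-1)K\ell^2$. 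Since $w=(N-1)y'/y$, this is exactly $y''+K\ell^2 y\le 0$.

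Finally, I would compare with the model solution $\bar y$ of $\bar y''+K\ell^2\bar y=0$, $\bar y(0)=0$, $\bar y'(0)=1$ --- that is, $\bar y(s)$ equals $\sin(\sqrt K\ell s)/(\sqrt K\ell)$, $s$, or $\sinh(\sqrt{-K}\ell s)/(\sqrt{-K}\ell)$ according to $\operatorname{sign}(K)$. The Wronskian $W:=y'\bar y-y\bar y'$ has $W'=(y''+K\ell^2 y)\bar y\le 0$ wherever $\bar y>0$, while the endpoint behavior $y(s)\sim c\,s^{(n-1)/(N-1)}$ with exponent $<1$ forces $W(0^+)=0$ with $W<0$ just to the right of $0$; hence $W<0$, so $y/\bar y$ is strictly decreasing on the interval where $\bar y>0$. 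When $K>0$ this already yields $\sqrt K\ell<\pi$ (otherwise $\bar y$ would vanish at some $s_0\le1$ while $y(s_0)>0$, contradicting that the decreasing quotient $y/\bar y$ stays finite), so in every case $\bar y>0$ on $(0,1]$ and the quotient decreases there. Evaluating at the endpoints gives $y(s)/y(1)\ge\bar y(s)/\bar y(1)$ on $[0,1]$, whence
\[
\beta_t(x,y)=t\Bigl(\tfrac{y(t)}{y(1)}\Bigr)^{N-1}\ge t\Bigl(\tfrac{\bar y(t)}{\bar y(1)}\Bigr)^{N-1}=\beta_t^{(K,N)}(x,y),
\]
the last equality being \eqref{eq:coeffespliciti} with $n$ replaced by $N$ and $\alpha=\sqrt{\lvert K\rvert}\,d(x,y)$. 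The main obstacle is precisely the endpoint analysis at $s=0$: because $N>n$ strictly, $y$ is not $C^1$ there (in fact $y'(0^+)=+\infty$), so one cannot quote a textbook Sturm comparison with matched initial data, and must instead run the Wronskian argument on the open interval and read off, in the positively curved case, the a priori bound $\sqrt K\,d(x,y)<\pi$ from the argument itself (or from the Bonnet--Myers theorem for $\mathrm{Ric}_\mis^N$). Everything else is a routine adaptation of the unweighted Riemannian computation, the only genuinely new ingredients being the two uses of Cauchy--Schwarz that produce the $-w^2/(N-1)$ term and the shift from the dimension $n$ to the parameter $N$.
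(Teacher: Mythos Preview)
Your proof is correct and follows the classical weighted-Riemannian route (essentially the argument of \cite{WW} to which the paper defers for this statement). The paper does not give a dedicated proof of Theorem~\ref{t:comp2}; it quotes it as background and only recovers it later as the Riemannian specialization of the general sub-Riemannian machinery (Theorem~\ref{t:srbericci-sharp}, see Remark~\ref{rmk:formally}). That recovery is organized differently from yours: rather than tracing the transverse Riccati equation and then running a scalar Sturm comparison, the paper keeps the matrix Riccati equation, absorbs the volume term by passing to $\bar V=V+\tfrac{\rho}{k}B$ (Lemma~\ref{l:piurho}, which is the matrix incarnation of your Cauchy--Schwarz step $\tfrac{(w+p)^2}{n-1}+\tfrac{p^2}{N-n}\ge\tfrac{w^2}{N-1}$), and then compares directly with the distortion coefficient of an LQ model via Proposition~\ref{p:LQdist}; the direction of motion is factored out at the end rather than at the start. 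The two arguments are thus the same ideas in different packaging: yours is shorter and self-contained in the Riemannian setting, while the paper's formulation is what generalizes to the sub-Riemannian case where a single scalar Riccati inequality is unavailable. One small point of phrasing: in your $K>0$ step you say the decreasing quotient $y/\bar y$ ``stays finite'', but in fact $y/\bar y\to+\infty$ at $s\to 0^+$ as well; the contradiction is simply that a strictly decreasing function on $(0,s_0)$ cannot tend to $+\infty$ at both endpoints (equivalently, $W(s_0^-)<0$ while $W(s_0)=-y(s_0)\bar y'(s_0)>0$). This is cosmetic and does not affect the argument.
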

The goal of this paper is to extend Theorems \ref{t:comp1} and \ref{t:comp2} to the sub-Riemannian setting. Our analysis suggests that, in this context, model spaces are microlocal, i.e.\ associated to a fixed geodesic, and are not sub-Riemannian manifolds. Rather, they belong to a more general class of variational problems, called linear-quadratic optimal control problems.

The comparison theory for distortion coefficient that we develop here can be paired with the results in \cite{BR-G1}, yielding \emph{explicit} sub-Riemannian Borell-Brascamp-Lieb-type or Brunn-Minkowski-type inequalities, under suitable curvature bounds. This work can be seen as a continuation of \cite{BR-G1}.

\medskip

We now give an overview of our results. We refer to \cite[Sec.~2]{BR-G1} for a minimal introduction to sub-Riemannian geometry, whose conventions are used here. See also Appendix \ref{s:prel}. For comprehensive references see \cite{nostrolibro,riffordbook,montgomerybook}. 

\subsection{Sub-Riemannian geometry and curvature}

A sub-Riemannian manifold is a triple $(M,\distr,g)$ where $g$ is a metric on a smooth vector distribution $\distr$. The sub-Riemannian distance $d_{SR}$ is the infimum of the length of curves tangent to $\distr$. The distribution $\distr$ is required to be bracket generating and under this assumptions $d_{SR}$ is continuous and finite. We assume that $(M,d_{SR})$ is a complete metric space, so that for any pair of points $x,y \in M$ there exists a minimizing geodesic joining them. The interpolating map $Z_t(A,B)$ between two Borel sets $A,B\subset M$ is defined as the $t$-intermediate points of geodesics joining points of $A$ and $B$.

We fix a smooth measure $\mis$ on $M$, and we define the \emph{distortion coefficient} as
\begin{equation}
\beta_{t}(x,y)=\limsup_{r\to 0}\frac{\mis(Z_{t}(x,\mathcal{B}_{r}(y)))}{\mis(\mathcal{B}_{r}(y))}, \qquad t \in [0,1],
\end{equation}
where $\mathcal{B}_{r}(y)$ denotes the sub-Riemannian ball centred at $y$ of radius $r>0$.

We need a directional bracket-generating-type condition, formalized in the following definition (given in terms of a general smooth horizontal curve).
\begin{definition}\label{d:flag}
Let $\gamma:[0,1]\to M$ be a smooth horizontal curve and let $\tanf$ be a smooth horizontal vector field such that $\tanf|_{\gamma(t)} = \dot{\gamma}(t)$ for all $t\in [0,1]$. For $i\geq 1$, let
\begin{equation}
\DD_{\gamma(t)}^i :=  \spn\{(\mathrm{ad}\, \tanf)^j Y|_{\gamma(t)} \mid  Y \in \Gamma(\distr),\; j \leq i-1\} \subseteq T_{\gamma(t)} M, \quad t\in [0,1],
\end{equation}
where $(\mathrm{ad}\, X) Y=[X,Y]$.
The \emph{growth vector} of the curve is the sequence
\begin{equation}\label{eq:growthvect}
\mathcal{G}_{\gamma(t)} := \{\dim \DD_{\gamma(t)}^1,\dim \DD_{\gamma(t)}^2,\ldots\},\qquad t\in [0,T].
\end{equation}
We say that the curve $\gamma$ is:
\begin{itemize}
\item[(a)] \emph{equiregular} if $\dim \DD_{\gamma(t)}^i$ does not depend on $t$ for all $i \geq 1$,
\item[(b)] \emph{ample} if for all $t$ there exists $m \geq 1$ such that $\dim \DD_{\gamma(t)}^{m} = \dim T_{\gamma(t)}M$.
\end{itemize}
\end{definition}
If $\gamma$ is ample and equiregular, then the following objects are well defined (we refer to Appendix~\ref{s:prel} for details):
\begin{itemize}
\item a Young diagram $\y$, encoding the growth vector of $\gamma$;
\item a quadratic form $\mathfrak{R}_{\gamma}(t):T_{\gamma(t)}M \times T_{\gamma(t)}M \to \R$ defined along $\gamma$;
\item a scalar product $\langle \cdot |\cdot\rangle_{\gamma(t)}$, on $T_{\gamma(t)}M$, extending $g$ along $\gamma$;
\item a canonical moving frame $X_1(t),\dots,X_n(t)$ along $\gamma$, orthonormal with respect to $\langle\cdot|\cdot\rangle_{\gamma(t)}$, and adapted to the flag $\DD_{\gamma(t)}$.
\end{itemize}
The canonical moving frame is a generalization of the concept of parallel transported frame. It is uniquely defined up to constant orthogonal transformations respecting the structure of the flag $\DD^i_{\gamma(t)}$. It is obtained as the projection of the so called canonical frame introduced in \cite{lizel}, in the setting of Jacobi curves \cite{agzel1,agzel2}.

\begin{rmk}\label{r:riemann}
Every Riemannian geodesic is ample and equiregular. In this case $\mathfrak{R}_{\gamma}(t)(v,v)=R_{g}(v,\dot \gamma(t),\dot\gamma(t),v)$, where $R_{g}$ is the Riemann curvature tensor. Furthermore, in the Riemannian case, $\mathfrak{R}_{\gamma}(t)$ is quadratic also with respect to $\dot{\gamma}(t)$.  Notice that $\tr \mathfrak{R}_{\gamma}(t)= \mathrm{Ric}_{g}(\dot \gamma(t))$.
\end{rmk}

\begin{definition}\label{def:geodvolder}
Given a smooth measure $\mis$ and an ample and equiregular geodesic $\g$, we define the \emph{geodesic volume derivative} along $\g$ as the function
\begin{equation}
\rho_{\mis,\g}(t)= \frac{d}{dt}\log \mis_{\gamma(t)}(X_1(t),\dots,X_n(t)),
\end{equation}
where $X_{1},\ldots,X_{n}$ is a canonical moving frame along $\gamma$.
\end{definition}
\begin{rmk}
In the Riemannian case $X_1(t),\dots,X_n(t)$ is parallel and orthonormal, hence if $\mis = e^{-\psi}\vol_g$, then $\rho_{\mis,\g}(t) = -g(\dot\gamma(t),\nabla\psi)$. In particular $\rho_{\vol_g,\g}(t)=0$ along any geodesic. For a definition of the geodesic volume derivative not using frames, and its relation with curvature we refer to \cite{ABP}. 
\end{rmk}
The  scalar product $\langle \cdot |\cdot\rangle_{\gamma(t)}$  induces a quadratic form $\mathfrak{B}_{\gamma}(t):T_{\gamma(t)}M \times T_{\gamma(t)}M\to \R$, where $\mathfrak{B}_{\gamma}(t)(v,w)$ is the scalar product of the orthogonal projections of $v,w$ on $\distr_{\gamma(t)}$.

\begin{definition}
Let $(M,\distr,g)$ be a $n$-dimensional sub-Rieman\-nian manifold. The \emph{Bakry-Émery curvature} along $\gamma$ is the family of quadratic forms $\mathfrak{R}_{\mis,\gamma}^{N}(t):T_{\gamma(t)}M \times T_{\gamma(t)}M\to \R$ defined by
\begin{equation} \label{eq:bemery}
\mathfrak{R}_{\mis,\gamma}^{N}(t)= 
\mathfrak{R}_{\gamma}(t)-\left(\frac{\dot \rho_{\mis,\g}(t)}{k}+\frac{n}{N-n}\frac{ \rho_{\mis,\g}^{2}(t)}{k^{2}}\right)\mathfrak{B}_{\gamma}(t),
\end{equation} 
where $N>n$ is a real parameter, and $k=\rank \distr$.
\end{definition}
\begin{rmk}
In the Riemannian case $k=n$ and  $\mathfrak{B}_{\gamma}(t)$ coincides with the Riemannian metric on $T_{\gamma(t)}M$. Hence:
\begin{equation}\label{eq:bemeryricci}
\tr \mathfrak{R}_{\mis,\gamma}^{N}(t)= 
\tr \mathfrak{R}_{\gamma}(t)-\dot \rho_{\mis,\g}(t)-\frac{1}{N-n} \rho_{\mis,\g}^{2}(t).
\end{equation}
Letting $\mis=e^{-\psi}\mathrm{vol}_{g}$, we have $\rho_{\mis,\g}(t)=-g(\nabla\psi,\dot \gamma(t))$ and therefore $\dot \rho_{\mis,\g}(t)=-\nabla^{2}\psi(\dot \gamma(t),\dot \gamma(t))$. Hence \eqref{eq:bemeryricci} reduces to the classical Bakry-Émery Ricci curvature defined in \eqref{eq:beriem}.
\end{rmk}

\subsection{The model spaces} 

We now introduce model spaces, which are associated to a fixed geodesic. Let $A,B$ be $n\times n$ matrices, with $B \geq 0$ and symmetric. Their special form is determined by the Young diagram $\y$ of the geodesic. Letting $k\leq n$ be the rank of $B$, there exist vectors $b_1,\dots,b_k \in \R^n$, unique up to orthogonal transformation, such that $B = \sum_{i=1}^k b_i b_i^*$.

Let $Q$ be a symmetric $n\times n$ matrix (playing the role of curvature bound). We consider a variational problem on $\R^n$, that consists in minimizing the functional
\begin{equation}\label{eq:lq1i}
C(u) = \frac{1}{2}\int_{0}^{1} \left(u^* u - x^*Qx \right)dt,
\end{equation}
among all trajectories $x:[0,1] \to \R^n$ with fixed endpoint satisfying
\begin{equation}\label{eq:lq2i}
\dot{x} = Ax +\sum_{i=1}^k u_i b_i,
\end{equation} 
for some control $u\in L^2([0,1],\R^k)$. These models are called \emph{linear quadratic optimal control problems} in control theory (see Section \ref{s:LQ} for details).

The functional \eqref{eq:lq1i} does not define a metric spaces structure on $\R^n$, in general. However one can still define the set $Z_t(\Omega_0,\Omega_1)$ of $t$-intermediate points between two Borel sets $\Omega_0,\Omega_1 \subset \R^{n}$ as the set of all points $x(t)$, where $x:[0,1]\to \R^n$ is a minimizer for the problem \eqref{eq:lq1i}-\eqref{eq:lq2i} such that $x(0) \in \Omega_0$ and $x(1)\in \Omega_1$. Then we define the model distortion coefficient as
\begin{equation}\label{eq:dclq}
\beta_{t}^{\y,Q}=\limsup_{r\to 0}\frac{|Z_{t}(x,B_{r}(y))|}{|B_{r}(y)|}, \qquad t \in [0,1],
\end{equation}
where $x,y\in \R^n$, $B_r(y)$ denotes the Euclidean ball with center $y$ and radius $r>0$, and $|\cdot|$ denotes the Lebesgue measure of $\R^n$. The quantity in the right hand side of \eqref{eq:dclq} is independent on the choice of $x,y\in \R^n$, and so the definition is well posed.

 The distortion coefficients of a LQ model can be easily computed by solving a linear Hamiltonian system, once the matrices $A,B,Q$ are fixed (cf. Proposition \ref{p:LQdist}).

\begin{rmk}
If $\y$ is the Young diagram of a geodesic on a $n$-dimensional Riemannian manifold, then $A=\mathbbold{0}_n$, $B=\mathbbold{1}_n$.  If we choose $Q= \kappa \mathbbold{1}_n$, we obtain the homogeneous distortion coefficient (cf.\ Section~\ref{ex:1})
\begin{equation}
\beta_t^{\y,Q} = \begin{cases}
\left(\frac{\sin(t\alpha)}{\sin(\alpha)}\right)^n & \k>0,  \\
t^n & \k=0, \\
\left(\frac{\sinh(t\alpha)}{\sinh(\alpha)}\right)^n & \k<0,
\end{cases} \qquad \alpha = \sqrt{|\k|}.
\end{equation}
One can recover the sharp Riemannian model coefficient $\beta_t^{(K,n)}$ of \eqref{eq:coeffespliciti} by choosing, instead, the $n\times n$ matrix $Q = K d^2(x,y) \diag(1,\dots,1,0)$. Since the potential $Q$ mimics the effect of curvature, this choice correctly takes into account that there is no curvature in the direction of the motion.
\end{rmk}

\subsection{Sectional-type comparison results} 

We now state the first pair of main results of the paper. Theorem \ref{t:srbe} requires separate assumptions on the curvature and on the volume derivative. Theorem \ref{t:srbe2} unifies both assumptions in a single Bakry-Émery-type lower bound.

\begin{theorem} \label{t:srbe}
Let $(x,y)\notin \cut(M)$ and assume that the unique geodesic $\gamma$ joining $x$ and $y$ is ample and equiregular, with Young diagram $\y$. Assume that the geodesic volume derivative satisfies $\rho_{\mis,\g}(t)\leq 0$ along $\gamma$, and that there exists a symmetric $n\times n$ matrix $Q$ such that $\mathfrak{R}_{\gamma}(t)\geq Q$ for every $t\in [0,1]$. Then  
\begin{equation}\label{eq:srbeeq1}
\frac{\beta_t(x,y)}{\beta_t^{\y,Q}} \text{ is a non-increasing function of $t \in (0,1]$}.
\end{equation}
In particular we have
\begin{equation}\label{eq:srbeeq2}
\beta_{t}(x,y)\geq \beta_{t}^{\y,Q}, \qquad \forall t\in [0,1].
\end{equation}
If, instead, $\mathfrak{R}_{\gamma}(t)\leq Q$ and $\rho_{\mis,\g}(t)\geq 0$ along $\gamma$ for every $t\in [0,1]$, then the function in \eqref{eq:srbeeq1} is non-decreasing and \eqref{eq:srbeeq2} holds with the opposite inequality.
\end{theorem}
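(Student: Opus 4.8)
The plan is to reduce \eqref{eq:srbeeq1}--\eqref{eq:srbeeq2} to a monotonicity property for the matrix Riccati equation along $\gamma$, compared with the one of the LQ model attached to $(\y,Q)$.

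\emph{Step 1 (Jacobian formula for $\beta_t$).} Let $\lam$ be the initial covector of $\gamma$ and, for $s\in[0,1]$, let $G_s\colon T_x^*M\to M$ be the partial sub-Riemannian exponential map based at $x$, so that $Z_t(x,\cdot)=G_t\circ G_1^{-1}$ in a neighbourhood of $y$ (here $(x,y)\notin\cut(M)$ is used). Writing the differential of $G_s$ in the canonical moving frame $X_1(s),\dots,X_n(s)$, its columns are the projections of the Jacobi fields issuing from $x$; in the symplectic coordinates adapted to the canonical frame these satisfy the linear Hamiltonian system
\[
\dot x = A x + B p,\qquad \dot p = -\mathfrak{R}_{\gamma}(s)\,x - A^* p ,
\]
where $A$ and $B$ are exactly the matrices determined by the Young diagram $\y$ --- this is precisely why the model \eqref{eq:lq1i}--\eqref{eq:lq2i} is an LQ problem with those matrices. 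Let $\mathcal{X}_1(s),\mathcal{P}_1(s)$ be the blocks of the fundamental solution with $x(0)=0$, so that $\mathcal{X}_1(s)$ is invertible for $s\in(0,1]$ by ampleness together with $(x,y)\notin\cut(M)$. Then, using Definition~\ref{def:geodvolder} for the variation of $\mis_{\gamma(s)}(X_1(s),\dots,X_n(s))$,
\[
\beta_t(x,y)=\frac{|\det \mathcal{X}_1(t)|}{|\det \mathcal{X}_1(1)|}\,\exp\!\left(-\int_t^1\rho_{\mis,\g}(s)\,ds\right),
\]
a formula already available from \cite{BR-G1}. Running the same computation with $\mathfrak{R}_{\gamma}(s)$ replaced by the constant matrix $Q$ and $\rho\equiv0$ (Lebesgue measure, constant frame) gives $\beta_t^{\y,Q}=|\det \mathcal{X}_1^{Q}(t)|/|\det \mathcal{X}_1^{Q}(1)|$, as in Proposition~\ref{p:LQdist}.

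\emph{Step 2 (Riccati comparison).} The matrix $S(t)=\mathcal{P}_1(t)\mathcal{X}_1(t)^{-1}$ is symmetric and solves the Riccati equation $\dot S+A^*S+SA+SBS+\mathfrak{R}_{\gamma}(t)=0$; let $S^{Q}(t)$ be the analogous solution with $\mathfrak{R}_{\gamma}$ replaced by $Q$. From $\frac{d}{dt}\log|\det \mathcal{X}_1(t)|=\tr A+\tr(BS(t))$ (and the same for $S^{Q}$), the constant $\tr A$ cancels and
\[
\frac{d}{dt}\log\frac{\beta_t(x,y)}{\beta_t^{\y,Q}}=\tr\!\big(B\,(S(t)-S^{Q}(t))\big)+\rho_{\mis,\g}(t).
\]
The crucial point is the inequality $S(t)\le S^{Q}(t)$ on $(0,1]$. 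Writing $W=S-S^{Q}$, a standard Riccati computation (conjugating $W$ by the flow $\Theta$ of $A+BS^{Q}$) gives
\[
\frac{d}{dt}\big(\Theta^{*}W\Theta\big)=-\,\Theta^{*}WBW\Theta-\Theta^{*}\big(\mathfrak{R}_{\gamma}(t)-Q\big)\Theta\le 0
\]
as a quadratic form, since $B\ge 0$ and $\mathfrak{R}_{\gamma}(t)\ge Q$. Because $S(t)$ and $S^{Q}(t)$ have the same, curvature-independent singular behaviour as $t\to0^{+}$ (dictated only by $\y$), one has $W(t)\to0$, hence $\Theta^{*}W\Theta\to 0$, and therefore $\Theta^{*}W\Theta\le 0$, i.e.\ $W(t)\le 0$, for all $t\in(0,1]$.

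\emph{Step 3 (conclusion).} From $W(t)\le 0$ and $B\ge 0$ we get $\tr(BW(t))=\tr(B^{1/2}W(t)B^{1/2})\le 0$, so together with the hypothesis $\rho_{\mis,\g}\le 0$ the logarithmic derivative above is nonpositive; hence $t\mapsto\beta_t(x,y)/\beta_t^{\y,Q}$ is non-increasing on $(0,1]$, which is \eqref{eq:srbeeq1}. Since $Z_1(x,\mathcal{B}_r(y))=\mathcal{B}_r(y)$ forces $\beta_1(x,y)=\beta_1^{\y,Q}=1$, evaluating the monotone ratio at $t=1$ yields $\beta_t(x,y)\ge\beta_t^{\y,Q}$ on $(0,1]$ (the case $t=0$ being trivial), which is \eqref{eq:srbeeq2}. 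The reverse statement follows by reversing every inequality: $\mathfrak{R}_{\gamma}\le Q$ gives $\Theta^{*}W\Theta\ge 0$ and $\rho_{\mis,\g}\ge 0$ makes the logarithmic derivative nonnegative (one works, if necessary, for $t$ smaller than the first conjugate time of the $Q$-model, where $\beta_t^{\y,Q}$ is defined and positive). I expect the main obstacle to be the $t\to0^{+}$ analysis inside Step 2: both Riccati solutions blow up, and one must prove that their difference goes to $0$ --- equivalently, that the singular part of $S(t)$ depends only on the Young diagram $\y$, not on $\mathfrak{R}_{\gamma}$. This is exactly where ampleness and equiregularity enter, and it should be extracted from the structure of the canonical frame and the small-time asymptotics in \cite{BR-G1}; a by-product of the same comparison is that, in the direct case, the $Q$-model has no conjugate point on $(0,1]$, so that $\beta_t^{\y,Q}>0$.
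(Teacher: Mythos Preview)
Your overall strategy coincides with the paper's: express $\tfrac{d}{dt}\log\beta_t(x,y)$ via the Riccati solution $V(t)$ in the canonical frame (your Step~1 is exactly Lemma~\ref{l:distortioncomputed}), compare $V$ with the model Riccati solution $V^{\y,Q}$, and deduce monotonicity of the ratio. Step~3 is fine and matches the paper verbatim, including the observation that in the direct case the LQ model cannot have a conjugate time in $(0,1]$.

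The gap is in Step~2. Your conjugation argument for $W=S-S^{Q}$ is correct \emph{once} you know $\Theta(t)^*W(t)\Theta(t)\to 0$ as $t\to 0^+$, but this is not a consequence of ``$S$ and $S^{Q}$ have the same singular behaviour''. First, for a general Young diagram the Riccati solution has a multi-order Laurent-type expansion (entries can blow up like $t^{-(2j-1)}$ for various $j$), and it is not evident that \emph{all} singular orders, not just the leading one, are curvature-independent; your claim $W(t)\to 0$ is therefore unjustified. Second, even granting $W(t)\to 0$, the flow $\Theta$ of $A+BS^{Q}$ is itself singular at $t=0$ (since $S^{Q}$ blows up), so $\Theta^*W\Theta$ is an indeterminate $\infty\cdot 0\cdot\infty$ and needs a separate argument.

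The paper avoids this entirely by passing to the \emph{inverse}: both $V$ and $V^{\y,Q}$ satisfy the well-posed limit condition $\lim_{t\to 0^+}V^{-1}(t)=\mathbbold{0}$, and the Riccati comparison theorem with limit initial datum (Theorem~\ref{t:riccatilim} in Appendix~\ref{s:riccati}) is stated precisely for this situation. That theorem gives $V(t)\le V^{\y,Q}(t)$ directly, without ever subtracting two blowing-up quantities. This is the missing ingredient you should invoke; your conjugation computation then becomes unnecessary.
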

The inequality $\Rcan_{\gamma}(t) \geq Q$ is understood by identifying the quadratic form $\Rcan_{\gamma}(t)$ with a $n \times n$ matrix using a canonical frame $X_1(t),\dots,X_n(t)$. 

\begin{rmk}\label{r:cgeqleq}
In  Theorem \ref{t:srbe} the assumption $\rho_{\mis,\g}(t) \leq 0$ (resp.\ $\geq 0$) can be weakened to $\rho_{\mis,\g}(t) \leq c$ for some $c\in\R$ with the following modifications in the conclusion:
\begin{equation}
\frac{\beta_t(x,y)}{\beta_t^{\y,Q}}e^{- ct} \text{ is a non-increasing function of $t\in(0,1]$}.
\end{equation}
In particular we have
\begin{equation}
\beta_t(x,y) \geq \beta_t^{\y,Q} e^{c(t-1)}, \qquad \forall t\in[0,1],
\end{equation}
and similarly with reversed inequalities  if $\mathfrak{R}_{\gamma}(t) \leq Q$ and $\rho_{\mis,\g}(t) \geq c$ for $t\in [0,1]$.
\end{rmk}

\begin{theorem} \label{t:srbe2}
Let $(x,y)\notin \cut(M)$ and assume that the unique geodesic $\gamma$ joining $x$ and $y$ is ample and equiregular, with Young diagram $\y$.  Assume that there exists $N>n$ and a symmetric $n\times n$ matrix $Q$ such that $\frac{1}{N}\mathfrak{R}_{\mis,\gamma}^{N}(t)\geq \frac{1}{n}Q$ for every $t\in [0,1]$. Then
\begin{equation}
\frac{\beta_{t}(x,y)^{1/N}}{(\beta_{t}^{D,Q})^{1/n}}\text{ is a non-increasing function of $t \in (0,1]$}.
\end{equation}
In particular we have
\begin{equation} \label{eq:dimint2}
\beta_{t}(x,y)^{1/N}\geq (\beta_{t}^{\y,Q})^{1/n}, \qquad \forall t\in [0,1].
\end{equation}
\end{theorem}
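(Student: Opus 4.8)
The plan is to pass from the monotonicity claim to the pointwise differential inequality $\tfrac1N\frac{d}{dt}\log\beta_t(x,y)\le\tfrac1n\frac{d}{dt}\log\beta_t^{\y,Q}$ on $(0,1]$, and then conclude using the normalisation at $t=1$. As in the analysis underlying Theorem~\ref{t:srbe}, the distortion coefficient is a Jacobian of the geodesic homothety centred at $x$; expressing it through a canonical moving frame along $\gamma$ one gets a formula of the form
\begin{equation}\label{eq:planlogder}
\frac{d}{dt}\log\beta_t(x,y)=\tr\big(\mathcal C\,\mathcal V(t)\big)+\rho_{\mis,\g}(t),
\end{equation}
where $\mathcal C\ge0$ is the matrix of the quadratic form $\mathfrak B_\gamma$ (so $\rank\mathcal C=k=\rank\distr$), together with a nilpotent structural part depending only on the Young diagram $\y$ that I suppress here, and $\mathcal V(t)$ is the symmetric solution of the matrix Riccati equation attached to $\y$ with potential $\mathfrak R_\gamma(t)$, singled out by the blow-up $\mathcal V(t)\to+\infty$ as $t\to0^+$ prescribed by $\y$. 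The LQ model with data determined by $\y$ and potential $Q$ satisfies the analogue of \eqref{eq:planlogder} with $\mathfrak R_\gamma(t)$ replaced by $Q$ and with vanishing volume-derivative term (cf.\ Proposition~\ref{p:LQdist}); write $\mathcal V^Q(t)$ for the corresponding Riccati solution, with the same $\y$-prescribed leading singularity.

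The key construction is to build from $\mathcal V$ and $\rho_{\mis,\g}$ a comparison matrix $\mathcal W(t):=\tfrac nN\,\mathcal V(t)+\mathcal K(t)$, where $\mathcal K(t)$ is a correction supported on the block $\mathfrak B_\gamma$ and built from $\rho_{\mis,\g}$ and $\dot\rho_{\mis,\g}$, chosen so that $\mathcal W$ is a \emph{supersolution} of the model-type matrix Riccati equation: $\dot{\mathcal W}+\mathcal W\mathcal C\mathcal W+(\text{linear part})+\tfrac nN\mathfrak R^{N}_{\mis,\gamma}(t)\le0$. This is exactly where the coefficients $\tfrac{\dot\rho_{\mis,\g}}{k}$ and $\tfrac{n}{N-n}\tfrac{\rho_{\mis,\g}^2}{k^2}$ in the definition \eqref{eq:bemery} come from: rescaling the quadratic part of the Riccati equation by $(n/N)^2$ versus $n/N$ leaves a remainder proportional to $-\mathcal V\mathcal C\mathcal V\le0$ (which produces the supersolution inequality), while completing the square with $\mathcal K$ reassembles the remaining $\rho$-terms precisely into the Bakry-Émery correction. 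Since $\tfrac nN<1$ and $\mathcal V,\mathcal V^Q$ share the same leading singularity as $t\to0^+$, $\mathcal W(t)$ lies strictly below $\mathcal V^Q(t)$ near $t=0$; combined with the hypothesis $\tfrac1N\mathfrak R^N_{\mis,\gamma}(t)\ge\tfrac1nQ$, i.e.\ $\tfrac nN\mathfrak R^N_{\mis,\gamma}(t)\ge Q$, the standard matrix Riccati comparison principle yields $\mathcal W(t)\le\mathcal V^Q(t)$ on $(0,1]$. Pairing with $\mathcal C\ge0$ and using that $\mathcal K$ is normalised so that $\tr(\mathcal C\mathcal K)=\tfrac nN\rho_{\mis,\g}$, the inequality $\tfrac nN\tr(\mathcal C\mathcal V)+\tr(\mathcal C\mathcal K)\le\tr(\mathcal C\mathcal V^Q)$ turns, via \eqref{eq:planlogder}, into $\tfrac1N\frac{d}{dt}\log\beta_t(x,y)\le\tfrac1n\frac{d}{dt}\log\beta_t^{\y,Q}$, which is the asserted monotonicity. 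I expect the explicit matching in the middle step to be the main obstacle: pinning down $\mathcal K$ and verifying that the leftover Riccati terms reassemble into exactly \eqref{eq:bemery}, and making sure the comparison survives the singularity at $t=0$, which is where the $\y$-dependence of the blow-up enters.

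Finally, \eqref{eq:dimint2} follows by evaluation at $t=1$: since $\beta_1(x,y)=\beta_1^{\y,Q}=1$, the monotone quantity $\beta_t(x,y)^{1/N}/(\beta_t^{\y,Q})^{1/n}$ equals $1$ at $t=1$, hence is $\ge1$ for all $t\in(0,1]$, while the case $t=0$ is trivial since both sides vanish. For the monotonicity step one only needs positivity of $\beta_t(x,y)$ and $\beta_t^{\y,Q}$ on $(0,1]$: the former holds because $(x,y)\notin\cut(M)$, and the latter is itself a by-product of $\mathcal W\le\mathcal V^Q$, which forbids a conjugate time of the model in $(0,1]$.
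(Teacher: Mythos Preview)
Your outline is correct and essentially coincides with the paper's proof, modulo a harmless change of normalization: the paper sets $\Vr=V+\tfrac{\rho_{\mis,\g}}{k}B$ and derives, by the same completion-of-the-square you describe, a Riccati inequality for $\Vr$ with \emph{rescaled} quadratic coefficient $\overline B=\tfrac nN B$ and potential $\mathfrak R^N_{\mis,\gamma}$, then compares against the LQ model $(A,\overline B,\tfrac Nn Q)$ and invokes the homogeneity Lemma~\ref{l:betaomo} to identify its distortion with $\beta_t^{\y,Q}$; your $\mathcal W$ is nothing but $\tfrac nN\Vr$ (so in fact $\mathcal K=\tfrac nN\tfrac{\rho_{\mis,\g}}{k}B$, built from $\rho$ alone, not $\dot\rho$), and your Riccati inequality is the paper's multiplied through by $\tfrac nN$, which lets you compare directly against $(A,B,Q)$ and bypass Lemma~\ref{l:betaomo}. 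For the singularity at $t=0$, the paper simply observes $\Vr^{-1}\to\mathbbold 0$ as $t\to0^+$ (since $V^{-1}\to\mathbbold 0$ and the added $B$-term is bounded) and applies the Riccati comparison with limit initial datum (Theorem~\ref{t:riccatilim}); this is cleaner than your ``$\tfrac nN<1$ plus same leading singularity'' argument and works verbatim for $\mathcal W$.
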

Notice that \eqref{eq:dimint2} gives a dimensional interpretation of the parameter $N$. Indeed, the distortion coefficient can be compared with the model one only after they are both normalized by an effective dimension.

\subsection{Ricci-type comparison results}

The sectional-type curvature bounds in the assumptions of Theorems \ref{t:srbe} and \ref{t:srbe2} can be weakened to Ricci-type bounds, similar in spirit to the ones in Theorems~\ref{t:comp1} and \ref{t:comp2}. In the Riemannian case, this is done by taking the trace of the matrix Riccati equation describing the evolution of Jacobi fields, and turning it into a simple scalar Riccati inequality (see e.g.\ \cite[Ch.\ 14]{V-oldandnew}). In the sub-Riemannian case, the process of ``taking the trace'' is more delicate. Due to the anisotropy of the structure, it only makes sense to take partial traces, leading to a number of Ricci curvatures (each one obtained as a partial trace on an invariant subspace of $T_{\gamma(t)} M$, determined by the Young diagram $\y$). This is done  by using a tracing technique developed in \cite{BR-connection}.

\begin{figure}
\centering
\includegraphics[scale=1]{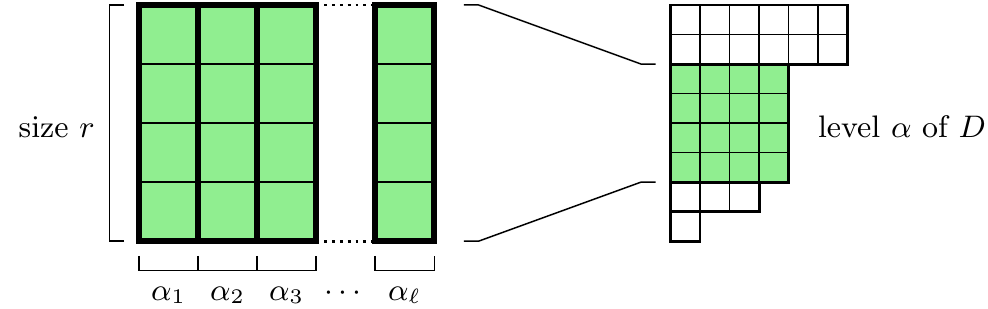}
\caption{Level $\lev$ and superboxes $\lev_i$ of a Young diagram.}
\label{fig:levyd}
\end{figure}

In order to state our main results, we need to introduce some terminology related to the boxes of a Young diagram $\y$ associated with an ample and equiregular geodesic. We refer to Figure~\ref{fig:levyd}. A \emph{level} is the collection of all the rows of the Young diagram with the same length. A \emph{superbox} is the collection of all boxes of the Young diagram in a given level, belonging to the same column. The \emph{size} of a level or a superbox is the number $r$ of boxes in each of its columns.  For a given level $\lev$ of the Young diagram, of length $\ell$, we denote its superboxes as $\lev_{1},\ldots,\lev_{\ell}$. Every superbox $\lev_i$ is associated with an invariant subspace $S^{\lev_{i}}_{\gamma(t)} \subseteq T_{\gamma(t)}M$, of dimension equal to its size. Finally, for each superbox $\lev_i$, we define a sub-Riemannian Ricci curvature (resp.\ Bakry-Émery Ricci) denoted $\Ric^{\lev_i}_{\gamma}(t)$ (resp.\  $\Ric_{\mis,\gamma}^{N, \lev_{i}}(t)$) for $i=1,\ldots,\ell$,
\begin{equation}\label{eq:srbakryemery}
\Ric_{\gamma}^{\lev_{i}}(t)=\tr \left(\mathfrak{R}_{\gamma}(t)\big|_{S^{\alpha_{i}}_{\gamma(t)}}\right),\qquad \Ric_{\mis,\gamma}^{N, \lev_{i}}(t)=\tr \left(\mathfrak{R}_{\mis,\gamma}^{N}(t)\big|_{S^{\alpha_{i}}_{\gamma(t)}}\right).
\end{equation}
Thus, we have a total number of Ricci curvatures equal to the number of superboxes of the Young diagram. In the Riemannian case, the Young diagram has a single column with $n = \dim M$ boxes. Thus there is only one superbox, and one Ricci curvature, corresponding to the full trace of $\Rcan_{\gamma}(t)$. See Section~\ref{s:cancurv} for details.

In the following theorems, $\Upsilon$ denotes the set of levels of the Young diagram.

\begin{theorem}\label{t:srbericci1}
Let $(x,y)\notin \cut(M)$ and assume that the unique geodesic joining $x$ and $y$ is ample and equiregular, with Young diagram $\y$. 

Assume that $\rho_{\mis,\g}(t)\leq 0$ along $\gamma$ and that for every level $\lev$ of size $r_{\lev}$ and length $\ell_{\lev}$ of $\y$ there exist $\k_{\lev_{i}}\in \R$, for $i=1,\ldots,\ell_{\lev}$, such that for every superbox $\lev_{i}$
\begin{equation}
\frac{1}{r_{\lev}}\Ric_{\gamma}^{\lev_{i}}(t) \geq \k_{\lev_{i}},  \qquad \forall t\in [0,1].
\end{equation}
Then 
\begin{equation}
\frac{\beta_t(x,y)}{\displaystyle{\prod_{\lev \in \Upsilon}} \left(\beta_{t}^{\y_{\lev},Q_{\lev}}\right)^{r_{\lev}}}\text{ is a non-increasing function of $t \in (0,1]$},
\end{equation}
where $\y_{\lev}$ is the Young diagram composed by a single row of length $\ell$, and $Q_{\lev}=\mathrm{diag}(\k_{\lev_{1}},\ldots,\k_{\lev_{\ell_{\alpha}}})$. In particular
\begin{equation}
\beta_{t}(x,y)\geq \prod_{\lev \in \Upsilon} \left(\beta_{t}^{\y_{\lev},Q_{\lev}}\right)^{r_{\lev}},  \qquad \forall t\in [0,1].
\end{equation}
\end{theorem}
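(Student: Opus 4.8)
The plan is to reduce the Ricci-type statement of Theorem~\ref{t:srbericci1} to the sectional-type statement of Theorem~\ref{t:srbe} by a tracing argument on the matrix Riccati equation governing the distortion coefficients, performed superbox by superbox. First I would recall the Riccati dynamics: along $\gamma$, in the canonical moving frame $X_1(t),\dots,X_n(t)$, the distortion coefficient $\beta_t(x,y)$ is expressed (as in \cite{BR-G1} and Section~\ref{s:LQ}) through the solution of a Jacobi-type linear Hamiltonian system with curvature matrix $\Rcan_\gamma(t)$ and an ``$A,B$'' pair dictated by the Young diagram $\y$; the logarithmic derivative $\frac{d}{dt}\log\beta_t$ satisfies a Riccati-type ODE whose source term is built from $\Rcan_\gamma(t)$ and whose drift encodes the combinatorics of $\y$. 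The same holds for each model LQ problem with data $(\y_\lev, Q_\lev)$: its coefficient $\beta_t^{\y_\lev,Q_\lev}$ solves the analogous scalar problem. The geodesic volume derivative $\rho_{\mis,\g}$ enters as an additive correction as in Remark~\ref{r:cgeqleq}, and the hypothesis $\rho_{\mis,\g}(t)\le 0$ is used exactly as in the proof of Theorem~\ref{t:srbe}.

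Next I would invoke the tracing technique of \cite{BR-connection}: the tangent space $T_{\gamma(t)}M$ decomposes, according to the Young diagram, into the invariant subspaces $S^{\lev_i}_{\gamma(t)}$, one per superbox, and the Riccati equation for the full distortion matrix splits, after taking partial traces over each $S^{\lev_i}_{\gamma(t)}$, into decoupled scalar Riccati inequalities. Concretely, taking the trace of the matrix Riccati equation restricted to the subspaces attached to a fixed level $\lev$ of size $r_\lev$ and length $\ell_\lev$ produces, for the quantity $\frac{d}{dt}\log\big(\text{that level's contribution to }\beta_t\big)$, a scalar inequality identical in form to the one satisfied by $r_\lev$ copies of the model $(\y_\lev,Q_\lev)$ — here the combinatorial bookkeeping is such that the level of size $r_\lev$ and length $\ell_\lev$ contributes a factor which is bounded below by $\big(\beta_t^{\y_\lev,Q_\lev}\big)^{r_\lev}$, precisely because $\y_\lev$ is a single row of length $\ell_\lev$ and $Q_\lev=\diag(\k_{\lev_1},\dots,\k_{\lev_{\ell_\lev}})$ is the diagonal matrix of the superbox-wise Ricci lower bounds. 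Summing the contributions of all levels $\lev\in\Upsilon$ gives a single scalar Riccati differential inequality for $\frac{d}{dt}\log\Big(\beta_t(x,y)\big/\prod_{\lev\in\Upsilon}\big(\beta_t^{\y_\lev,Q_\lev}\big)^{r_\lev}\Big)$ whose sign is controlled by the hypotheses $\frac{1}{r_\lev}\Ric_\gamma^{\lev_i}(t)\ge\k_{\lev_i}$ together with $\rho_{\mis,\g}(t)\le0$.

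Then I would run the standard Riccati comparison (Sturm-type) argument: the difference of the two logarithmic derivatives is shown to be $\le 0$ on $(0,1]$ by comparing solutions of the Riccati inequality with the solution of the corresponding Riccati equality, using the short-time asymptotics of all the $\beta$'s (they all behave like $t^{\dim M}$ as $t\to 0^+$, after the appropriate normalizations, so the ratio tends to $1$) to fix the initial condition. Monotonicity of the ratio on $(0,1]$ follows, and evaluating at $t=1$ versus $t\to 0^+$ yields $\beta_t(x,y)\ge\prod_{\lev\in\Upsilon}\big(\beta_t^{\y_\lev,Q_\lev}\big)^{r_\lev}$ for all $t\in[0,1]$.

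The main obstacle is the second step: verifying that the partial-trace operation on the matrix Riccati equation genuinely decouples along the superbox decomposition and that the resulting scalar inequality matches \emph{exactly} the model $(\y_\lev,Q_\lev)$ with the correct multiplicity $r_\lev$. This requires the precise structure of the canonical frame adapted to the flag $\DD^i_{\gamma(t)}$ and the block structure of the matrices $A,B$ forced by the Young diagram, i.e.\ the full machinery of \cite{lizel} and \cite{BR-connection}; in particular one must check that the trace over $S^{\lev_i}_{\gamma(t)}$ of $\Rcan_\gamma(t)$ is $\Ric_\gamma^{\lev_i}(t)$ (this is \eqref{eq:srbakryemery}) and that the drift terms in the Riccati equation, after tracing, reproduce those of a single row of length $\ell_\lev$. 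Once this structural identity is in place, the remaining analysis is the routine scalar Riccati comparison already used for Theorem~\ref{t:srbe}.
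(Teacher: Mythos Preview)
Your overall strategy---splitting and tracing the matrix Riccati equation of Lemma~\ref{l:distortioncomputed} along the levels of the Young diagram, then applying Riccati comparison---is the same as the paper's. However, several points in your description are inaccurate and, if taken literally, would not yield a proof.

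First, the traced equation is \emph{not scalar}. The paper proceeds in two steps: a \emph{splitting} step, in which one extracts the diagonal block $V_{aa}(t)$ of $V(t)$ for each \emph{row} $a$ of $\y$ and shows that it satisfies an $n_a\times n_a$ Riccati equation with effective curvature $\wt R_{aa}(t)=R_{aa}(t)+\sum_{b\neq a}V_{ab}\Gamma_2 V_{ab}^*\ge R_{aa}(t)$; and a \emph{tracing} step, in which one averages $V_\lev=\tfrac{1}{r_\lev}\sum_{a\in\lev}V_{aa}$ over the rows of a fixed level. The resulting equation for $V_\lev$ is an $\ell_\lev\times\ell_\lev$ \emph{matrix} Riccati equation, and the superboxes $\lev_1,\dots,\lev_{\ell_\lev}$ appear as the diagonal entries of its curvature term, not as separate scalar equations. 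Two non-obvious ingredients are needed here which you do not mention: the normality of $R(t)$ in the sense of Zelenko--Li (Definition~\ref{d:normal}), which forces each $R_{aa}(t)$ to be diagonal so that $\tfrac{1}{r_\lev}\sum_{a\in\lev}R_{aa}(t)=\diag\big(\tfrac{1}{r_\lev}\Ric_\gamma^{\lev_1}(t),\dots,\tfrac{1}{r_\lev}\Ric_\gamma^{\lev_{\ell_\lev}}(t)\big)$; and a matrix Cauchy--Schwarz inequality (\cite[Lemma~5.5]{BR-comparison}) ensuring that $\tfrac{1}{r_\lev}\sum_{a\in\lev}V_{aa}\Gamma_2 V_{aa}-V_\lev\Gamma_2 V_\lev\ge 0$, so that the averaging only improves the effective curvature. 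Without these, the passage from the block equations to a Riccati equation for $V_\lev$ with curvature $\ge Q_\lev$ does not go through.

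Second, your handling of the initial condition is off. The $\beta$'s do not all behave like $t^{\dim M}$ as $t\to 0^+$; the correct exponent is the geodesic dimension, and in any case there is no need to analyse the ratio at $t=0$. The paper instead uses the limit initial datum $\lim_{t\to 0^+}V_\lev(t)^{-1}=\mathbbold{0}$ (inherited from $V$, via \cite[Lemma~5.4]{BR-comparison}) together with the Riccati comparison with limit data of Appendix~\ref{s:riccati}, yielding $V_\lev(t)\le V^{\y_\lev,Q_\lev}(t)$ directly. Summing $\tr(\Gamma_2(\y_\lev)V_\lev)$ over levels gives $\tfrac{d}{dt}\log\beta_t(x,y)\le\sum_\lev r_\lev\tfrac{d}{dt}\log\beta_t^{\y_\lev,Q_\lev}+\rho_{\mis,\gamma}(t)$, and monotonicity follows; the final inequality then comes from $\beta_1(x,y)=\beta_1^{\y_\lev,Q_\lev}=1$, not from the behaviour at $t=0$.
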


A similar conclusion can be obtained if, in Theorem~\ref{t:srbericci1}, one assumes $\rho_{\mis,\g}(t)\leq c$ for some $c\in \R$ along $\gamma$ (cf.\ Remark~\ref{r:cgeqleq}).

\begin{theorem}\label{t:srbericci2}
Let $(x,y)\notin \cut(M)$ and assume that the unique geodesic joining $x$ and $y$ is ample and equiregular, with Young diagram $\y$. 

Assume that there exists $N>n$ such that for every level $\lev$ of size $r_{\lev}$ and length $\ell_{\lev}$ of $\y$ there exist $\k_{\lev_{i}}\in \R$, for $i=1,\ldots,\ell_{\lev}$, such that for every superbox $\lev_{i}$  
\begin{equation}
\frac{1}{r_{\lev}}\Ric_{\mis,\gamma}^{N, \lev_{i}}(t) \geq \frac{N}{n}\k_{\lev_{i}},   \qquad \forall t\in [0,1].
\end{equation}
Then  
\begin{equation}
\frac{\beta_t(x,y)^{1/N}}{\displaystyle\prod_{\lev \in \Upsilon} \left(\beta_{t}^{\y_{\lev},Q_{\lev}}\right)^{r_{\lev}/n}}\text{ is a non-increasing function of $t \in (0,1]$},
\end{equation}
where $\y_{\lev}$ is the Young diagram composed by a single row of length $\ell$, and $Q_{\lev}=\mathrm{diag}(\k_{\lev_{1}},\ldots,\k_{\lev_{\ell_{\alpha}}})$. In particular
\begin{equation}
\beta_{t}(x,y)^{1/N}\geq  \prod_{\lev \in \Upsilon} \left(\beta_{t}^{\y_{\lev},Q_{\lev}}\right)^{r_{\lev}/n},  \qquad \forall t\in [0,1].
\end{equation}
\end{theorem}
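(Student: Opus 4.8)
The plan is to rephrase the statement, as in the proofs of Theorems~\ref{t:srbe2} and~\ref{t:srbericci1}, as a monotonicity property of a solution of a matrix Riccati equation along $\gamma$, and then to control that solution by \emph{tracing} it over the invariant subspaces attached to the superboxes of $\y$, using the technique of~\cite{BR-connection}, at the same time running the \emph{dimensional} version of the Riccati comparison underlying Theorem~\ref{t:srbe2}. First I would recall the Riccati form of the distortion coefficient: fixing a canonical frame $X_1(t),\dots,X_n(t)$ along $\gamma$, the Jacobi equation becomes a linear Hamiltonian system with constant matrices $A$, $B$ (with $B=B^{*}\ge 0$, $\rank B=k$) determined by $\y$ and potential $\mathfrak{R}_\gamma(t)$; the symmetric solution $V(t)=N(t)M(t)^{-1}$ of $\dot V+VBV+A^{*}V+VA+\mathfrak{R}_\gamma(t)=\mathbbold{0}$, with the singular initial datum coming from $M(0)=\mathbbold{0}$, satisfies
\[
\frac{d}{dt}\log\beta_t(x,y)=\tr\!\big(B\,V(t)\big)+\rho_{\mis,\g}(t),
\]
see~\cite{BR-G1,ABP}, and the same identity holds for the LQ model $(\y_\lev,Q_\lev)$ with $(A,B,\mathfrak{R}_\gamma(t),\rho)$ replaced by $(A_\lev,B_\lev,Q_\lev,0)$, where $A_\lev,B_\lev$ are the structural matrices of a single row of length $\ell_\lev$ and $B_\lev$ has rank one (cf.\ Proposition~\ref{p:LQdist}). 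Since $\beta_1(x,y)=1=\beta_1^{\y_\lev,Q_\lev}$, it suffices to prove that the logarithmic derivative of the ratio in the statement is nonpositive on $(0,1]$.

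Next I would trace. The Young diagram induces a flow-invariant splitting $T_{\gamma(t)}M=\bigoplus_{\lev\in\Upsilon}\bigoplus_{i=1}^{\ell_\lev}S^{\lev_i}_{\gamma(t)}$, and both $A$ and $B$ respect the coarser splitting by levels, with $A|_{\lev}=A_\lev\otimes\mathbbold{1}_{r_\lev}$ and $B|_{\lev}=B_\lev\otimes\mathbbold{1}_{r_\lev}$. Applying the tracing technique of~\cite{BR-connection} to the matrix Riccati equation, from the partial traces $\tfrac1{r_\lev}\tr\!\big(V(t)|_{S^{\lev_i}}\big)$ I would assemble, for each level $\lev$, an $\ell_\lev$-tuple which, by the operator convexity of $W\mapsto WB_\lev W$ and the structural constraints on $\mathfrak{R}_\gamma(t)$ forced by the canonical frame, is a super-solution (in the comparison sense) of the Riccati system of the model $(\y_\lev,Q_\lev)$, \emph{in which the curvature enters only through the superbox traces $\tfrac1{r_\lev}\Ric^{\lev_i}_\gamma(t)$}, while $\tr(B\,V(t))$ splits as $\sum_{\lev}r_\lev$ times the corresponding model trace. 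To incorporate the measure I would run the dimensional variant of this construction, paralleling the passage from Theorem~\ref{t:srbe} to Theorem~\ref{t:srbe2} (and~\cite{WW} in the Riemannian case): one augments each level by a one-dimensional virtual block of weight $N-n$, which turns $\rho_{\mis,\g}(t)$ and $\dot\rho_{\mis,\g}(t)$ into exactly the correction defining $\mathfrak{R}^{N}_{\mis,\gamma}(t)$, so that the effective potential on the superbox $\lev_i$ becomes $\tfrac1{r_\lev}\Ric^{N,\lev_i}_{\mis,\gamma}(t)\ge\tfrac Nn\k_{\lev_i}$, and the natural normalized quantities to compare become $\beta_t(x,y)^{1/N}$ against $(\beta_t^{\y_\lev,Q_\lev})^{1/n}$ — the $1/n$ from the $n$-dimensional model, the $1/N$ from the augmented problem.

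Finally I would run the comparison level by level: the traced super-solution and the genuine Riccati solution of $(\y_\lev,Q_\lev)$ have the same singular asymptotics as $t\to0^{+}$ (a compatibility between the geodesic dimension of $\y$ and those of the $\y_\lev$), so the usual Riccati comparison theorem (as in~\cite{lizel}, or in its scalar form~\cite[Ch.~14]{V-oldandnew}) applies; summing over $\lev\in\Upsilon$ with multiplicities $r_\lev$ and using the splitting of $\tr(BV)$ gives
\[
\frac{d}{dt}\log\frac{\beta_t(x,y)^{1/N}}{\displaystyle\prod_{\lev\in\Upsilon}\big(\beta_t^{\y_\lev,Q_\lev}\big)^{r_\lev/n}}\le 0,\qquad t\in(0,1],
\]
and, since the ratio equals $1$ at $t=1$, this yields $\beta_t(x,y)^{1/N}\ge\prod_{\lev\in\Upsilon}(\beta_t^{\y_\lev,Q_\lev})^{r_\lev/n}$ for all $t\in[0,1]$.

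The hard part is the tracing step. One must verify that the partial traces of $V(t)$ over the superbox subspaces close up into a system that is genuinely isomorphic, as a comparison problem, to the Riccati system of the single-row model $(\y_\lev,Q_\lev)$; that the only curvature input is the superbox trace $\tfrac1{r_\lev}\Ric^{\lev_i}_\gamma(t)$, so that the off-diagonal, cross-superbox part of $\mathfrak{R}_\gamma(t)$ in the canonical frame is irrelevant to the comparison; and that every Cauchy–Schwarz loss along the way has the sign needed for a one-sided bound. Keeping the dimensional bookkeeping of the virtual blocks exact — so that $\tfrac1{r_\lev}\Ric^{N,\lev_i}_{\mis,\gamma}(t)\ge\tfrac Nn\k_{\lev_i}$ is precisely the hypothesis required and the exponents come out as $1/N$ and $r_\lev/n$ — is the secondary, more computational, difficulty.
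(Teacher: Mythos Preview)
Your proposal is essentially correct and follows the same strategy as the paper: combine the measure-absorption trick from the proof of Theorem~\ref{t:srbe2} with the level-by-level tracing from the proof of Theorem~\ref{t:srbericci1}, and then apply Riccati comparison. The paper, however, performs these two operations in the opposite order: it \emph{first} absorbs $\rho_{\mis,\g}$ at the full $n\times n$ matrix level by passing to $\Vr(t)=V(t)+\tfrac{\rho_{\mis,\g}(t)}{k}B$, which (via Lemma~\ref{l:piurho}, a Cauchy--Schwarz argument exploiting $A^*B=BA=\mathbbold{0}$ and $B^2=B$) satisfies a Riccati \emph{inequality} with $\overline{B}=\tfrac{n}{N}B$ and potential $R_\mis^N(t)$, and only \emph{then} traces $\Vr$ over levels exactly as in Theorem~\ref{t:srbericci1}; the homogeneity Lemma~\ref{l:betaomo} recovers $\beta_t^{\y_\lev,Q_\lev}$ from the rescaled model. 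This order is cleaner because the single scalar $\rho_{\mis,\g}$ is absorbed once and for all before splitting into levels, rather than distributed proportionally afterwards. Also, the concrete mechanism is this algebraic shift plus Cauchy--Schwarz, not an augmentation by ``virtual blocks'' per level; your phrasing there is heuristic and, taken literally, would overcount the extra $N-n$ dimensions across levels.
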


\subsection{Removing the direction of motion}

Theorems \ref{t:srbericci1} and \ref{t:srbericci2} do not take into account the fact that distances are not distorted in the direction of a geodesic. This is well known in Riemannian geometry (see e.g.\ the discussion in \cite[p.\ 384]{V-oldandnew}). It corresponds to the fact that $\mathfrak{R}_{\gamma}(t)(\dot\gamma(t),\dot\gamma(t)) = 0$, which remains true in sub-Riemannian geometry as a consequence of the homogeneity of the Hamiltonian. 

At a technical level, the distortion coefficient can always be written as
\begin{equation}
\beta_t(x,y) = t \beta_t^\perp(x,y), \qquad \forall\,(x,y)\notin \cut(M),
\end{equation}
where $\beta_t^\perp(x,y)$ is, roughly speaking, the distortion felt in the transverse directions to the geodesic joining $x$ with $y$. In all proofs, the direction of the motion can be factored out, proving comparison results for $\beta_t^\perp(x,y)$. In terms of Young diagram, the direction of the motion corresponds to a block situated in the bottom level, the only one of length $1$, whose effective size is reduced by one. We omit the details, recording only the final statement, which is a sharper version of  Theorem \ref{t:srbericci2}.

\begin{theorem}\label{t:srbericci-sharp}
Let $(x,y)\notin \cut(M)$ and assume that the unique geodesic joining $x$ and $y$ is ample and equiregular, with Young diagram $\y$. 

Assume that there exists $N>n$ such that for every level $\lev$ of size $r_{\lev}$ and length $\ell_{\lev}$ there exist $\k_{\lev_{i}}\in \R$, for $i=1,\ldots,\ell_{\lev}$, such that for every superbox $\lev_{i}$  
\begin{equation}
\frac{1}{r_{\lev}}\Ric_{\mis,\gamma}^{N, \lev_{i}}(t) \geq \frac{N-1}{n-1}\k_{\lev_{i}},   \qquad \forall t\in [0,1],
\end{equation}
with the convention that if $\lev$ is the level of length $1$ then $r_\lev$ is replaced by $r_\lev-1$, and if $r_\lev =0$ then this level is omitted. Then, with the same convention, we have
\begin{equation}
\frac{\beta_t^\perp(x,y)^{1/(N-1)}}{\displaystyle\prod_{\lev \in \Upsilon} \left(\beta_{t}^{\y_{\lev},Q_{\lev}}\right)^{r_{\lev}/(n-1)}}\text{ is a non-increasing function of $t \in (0,1]$},
\end{equation}
where $\y_{\lev}$ is the Young diagram composed by a single row of length $\ell$, and $Q_{\lev}=\mathrm{diag}(\k_{\lev_{1}},\ldots,\k_{\lev_{\ell_{\alpha}}})$. In particular
\begin{equation}
\beta_{t}(x,y)^{1/(N-1)}\geq  t^{1/(N-1)}\prod_{\lev \in \Upsilon} \left(\beta_{t}^{\y_{\lev},Q_{\lev}}\right)^{r_{\lev}/(n-1)},  \qquad \forall t\in [0,1].
\end{equation}
\end{theorem}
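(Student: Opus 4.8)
The plan is to follow the proof strategy of Theorem~\ref{t:srbericci2}, the only genuinely new ingredient being the systematic removal of the direction of motion. First I would recall the Jacobi/Riccati representation underlying all the comparison results above: along the ample equiregular geodesic $\gamma$ one writes $\beta_t(x,y)$ as a determinant built from the solution of the Jacobi equation with initial datum concentrated at $x$, and the monotonicity of $\beta_t(x,y)/\beta_t^{\y,Q}$ is obtained from a matrix Riccati comparison --- the bound $\Rcan_\gamma(t)\geq Q$, and its Bakry--Émery and partial-trace avatars, force the Riccati solution attached to $M$ to dominate the one attached to the LQ model, hence $\frac{d}{dt}\log\beta_t(x,y)$ dominates $\frac{d}{dt}\log\beta_t^{\y,Q}$ after the appropriate dimensional normalization. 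I would then recall the tracing technique of \cite{BR-connection}, which decomposes $T_{\gamma(t)}M$ into the invariant subspaces $S^{\lev_i}_{\gamma(t)}$ attached to the superboxes and turns the matrix Riccati equation into a family of scalar Riccati inequalities, one per superbox, governed by the Ricci-type quantities $\Ric_{\mis,\gamma}^{N,\lev_i}(t)$.

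Next I would make precise the factorization $\beta_t(x,y) = t\,\beta_t^\perp(x,y)$. By homogeneity of the sub-Riemannian Hamiltonian, the velocity $\dot\gamma(t)$ spans a rank-one subspace of $T_{\gamma(t)}M$ which is invariant under the Jacobi dynamics, sits inside the bottom level of the Young diagram (the unique level of length $1$), and on which the linearized flow is that of a free particle; this contributes exactly the scalar factor $t$ to the determinant and leaves a reduced dynamics on a complementary $(n-1)$-dimensional space, whose Young diagram $\y^\perp$ is obtained from $\y$ by deleting one box from the length-$1$ level --- precisely the convention ``replace $r_\lev$ by $r_\lev-1$ in that level, and omit the level if $r_\lev$ drops to $0$''. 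Since $\Rcan_\gamma(t)(\dot\gamma(t),\dot\gamma(t))=0$, the restricted curvature forms, their partial traces, and the geodesic volume derivative are unchanged except for this bookkeeping of the bottom superbox.

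I would then run the argument of Theorem~\ref{t:srbericci2} on $\beta_t^\perp(x,y)$ with the ambient dimension $n$ replaced by $n-1$ and the parameter $N$ replaced by $N-1$. The key arithmetic fact is that the correction term in the definition of $\Rcan_{\mis,\gamma}^N$ involves the ratio $\frac{n}{N-n}$, which under the substitution $(n,N)\mapsto(n-1,N-1)$ becomes $\frac{n-1}{(N-1)-(n-1)}=\frac{n-1}{N-n}$; this is exactly what makes the scalar Riccati comparison close with the hypothesis $\frac{1}{r_\lev}\Ric_{\mis,\gamma}^{N,\lev_i}(t)\geq\frac{N-1}{n-1}\k_{\lev_i}$ in place of $\frac{N}{n}\k_{\lev_i}$, and what produces the monotone ratio carrying the exponents $r_\lev/(n-1)$ and the global power $1/(N-1)$. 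Multiplying the resulting inequality for $\beta_t^\perp(x,y)$ by $t$, and using $\beta_t(x,y)=t\,\beta_t^\perp(x,y)$, gives both displayed conclusions, the factor $t^{1/(N-1)}$ appearing on the right.

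The step I expect to be the main obstacle is the decoupling itself: one must verify rigorously that the velocity direction splits off as a one-dimensional invariant block of the Jacobi curve, compatibly with the canonical frame and the flag $\DD^i_{\gamma(t)}$, so that the partial traces over the superboxes of $\y^\perp$ coincide with those over $\y$ away from the bottom level, and so that the Bakry--Émery form $\mathfrak{B}_\gamma(t)$ --- which only sees the orthogonal projection onto $\distr_{\gamma(t)}$, and whose coefficients $\dot\rho_{\mis,\g}(t)/k$ and $\rho_{\mis,\g}^2(t)/k^2$ involve $k=\rank\distr$ --- is redistributed consistently once the motion direction, which itself lies in $\distr_{\gamma(t)}$, is quotiented out. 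Once this bookkeeping is in place, the analytic core is the same scalar Riccati comparison already used for Theorem~\ref{t:srbericci2}, now applied with the shifted pair $(n-1,N-1)$.
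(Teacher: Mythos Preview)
The paper does not actually give a proof of this theorem: it states that ``In all proofs, the direction of the motion can be factored out, proving comparison results for $\beta_t^\perp(x,y)$\ldots We omit the details, recording only the final statement.'' Your proposal matches this outline exactly --- factor $\beta_t(x,y)=t\,\beta_t^\perp(x,y)$, observe that $\dot\gamma$ sits in the level of length~$1$ and contributes a free-particle block (since $\Rcan_\gamma(t)(\dot\gamma,\dot\gamma)=0$), and rerun the Riccati/tracing argument of Theorem~\ref{t:srbericci2} on the remaining $(n-1)$ directions.

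One point of caution: your explanation that the constants work out because the ratio $\tfrac{n}{N-n}$ in the definition of $\Rcan_{\mis,\gamma}^N$ ``becomes'' $\tfrac{n-1}{N-n}$ under $(n,N)\mapsto(n-1,N-1)$ is a little loose. The hypothesis of the theorem is stated in terms of the \emph{original} Bakry--\'Emery Ricci $\Ric_{\mis,\gamma}^{N,\lev_i}$, whose correction term still carries $\tfrac{n}{N-n}$ and $k$, not $\tfrac{n-1}{N-n}$ and $k-1$. What actually changes in the proof is the free parameter $a$ in Lemma~\ref{l:piurho} (one now sets $1-1/a^2=(n-1)/(N-1)$), together with $\tr B^\perp=k-1$; the resulting $\Rr^\perp$ then has to be matched against the stated hypothesis. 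You correctly identify this redistribution as the main obstacle, but it is where the real bookkeeping lives, not in a formal substitution inside the definition of $\Rcan_{\mis,\gamma}^N$.
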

\begin{rmk}\label{rmk:formally}
If $\rho_{\mis,\g} \leq 0$ along the geodesic joining $x$ with $y$, then one can take formally $N=n$ in the previous theorem, and obtain a version of Theorem \ref{t:srbericci1} with the direction of the motion taken out. For an $n$-dimensional Riemannian manifold, Theorem \ref{t:srbericci-sharp} recovers the sharp statements of Theorems \ref{t:comp1} and \ref{t:comp2}.
\end{rmk}

\subsection{The two columns case}

As a consequence of Theorem \ref{t:srbericci-sharp}, and non-trivial inequalities for the model distortion coefficients, we obtain polynomial bounds for the distortion coefficient under appropriate curvature bounds when the Young diagram has two columns. We only give a statement for $\rho_{\mis,\g} \leq 0$, in which case the Bakry-Émery curvature is not necessary (formally $N=n$ in Theorem \ref{t:srbericci-sharp}). We adopt an ad-hoc labelling notation for the superboxes of a $2$-columns Young diagram and the corresponding Ricci curvatures, as in Figure \ref{f:YdMCP}.

\begin{figure}[h]
\centering
\includegraphics[scale=1]{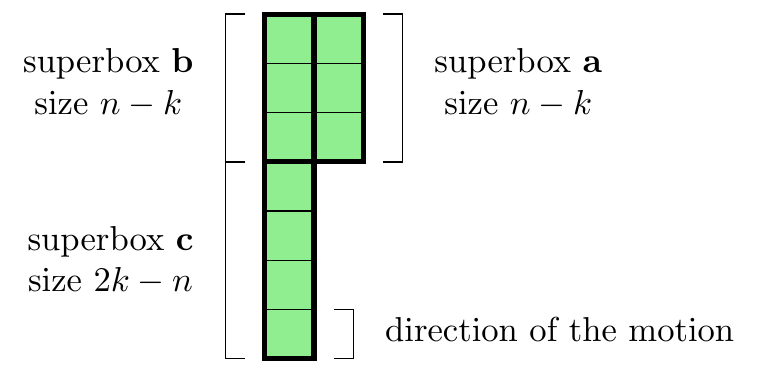}
\caption{Young diagram with two columns. Here, $k$ is the rank of the sub-Riemannian distribution, while $n$ is the dimension of the manifold. The first level, of size $n-k$ and length $2$, is composed by the superboxes denoted, respectively, $b$ and $a$. The second level, of size $2k-n$ and length $1$, is composed by the superbox $c$.}\label{f:YdMCP}
\end{figure}

\begin{theorem}\label{t:mcp}
Let $(x,y)\notin \cut(M)$ and assume that the unique geodesic joining $x$ and $y$ is ample and equiregular, with Young diagram $\y$ as in Figure \ref{f:YdMCP}. Assume that for all $t\in[0,1]$ we have $\rho_{\mis,\g}(\gamma(t))  \leq 0$ and
\begin{align}
\Ric^a_{\gamma(t)} & \geq (n-k) \kappa_a,  \\
\Ric^b_{\gamma(t)} & \geq (n-k) \kappa_b,  \\
\Ric^c_{\gamma(t)} & \geq  (2k-n-1)\kappa_c, 
\end{align}
for some $\k_a,\k_b,\k_c \in \R$ satisfying	
\begin{equation}
4\kappa_a+\kappa_b^2 \geq 0, \qquad \kappa_b \geq 0, \qquad \kappa_c \geq 0. \label{eq:conditions2}
\end{equation}
Then $\beta_t(x,y)/t^{k+3(n-k)}$ is a non-increasing of $t\in (0,1]$, and hence
\begin{equation}\label{eq:mcptheorem}
\beta_t(x,y) \geq t^{k+3(n-k)}.
\end{equation}
The exponent $k+3(n-k)$ is optimal, i.e.\ the lowest one such that \eqref{eq:mcptheorem} holds true.
\end{theorem}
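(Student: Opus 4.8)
The Young diagram $\y$ of Figure~\ref{f:YdMCP} has exactly two levels: one of length $2$ and size $n-k$ (superboxes $b,a$), and one of length $1$ and size $2k-n$ (superbox $c$). Let $\y_1$ be the single–row diagram of length $2$ and $\y_2$ the single box (a row of length $1$), and set $Q_1=\diag(\kappa_b,\kappa_a)$, $Q_2=(\kappa_c)$. Since $\rho_{\mis,\g}\leq0$, Remark~\ref{rmk:formally} lets us apply Theorem~\ref{t:srbericci-sharp} with the formal value $N=n$; its hypothesis $\tfrac1{r_\lev}\Ric^{\lev_i}_\gamma(t)\geq\kappa_{\lev_i}$, with the convention $r_\lev\mapsto r_\lev-1$ on the length-$1$ level, reads precisely as $\Ric^a_{\gamma(t)}\geq(n-k)\kappa_a$, $\Ric^b_{\gamma(t)}\geq(n-k)\kappa_b$, $\Ric^c_{\gamma(t)}\geq(2k-n-1)\kappa_c$, i.e.\ the curvature assumptions of Theorem~\ref{t:mcp}. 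Theorem~\ref{t:srbericci-sharp} then gives that
\begin{equation*}
\frac{\beta_t^\perp(x,y)^{1/(n-1)}}{\bigl(\beta_t^{\y_1,Q_1}\bigr)^{(n-k)/(n-1)}\bigl(\beta_t^{\y_2,Q_2}\bigr)^{(2k-n-1)/(n-1)}}
\end{equation*}
is non-increasing on $(0,1]$. Using $n-1=2(n-k)+(2k-n-1)$, $k+3(n-k)=1+4(n-k)+(2k-n-1)$, $\beta_t(x,y)=t\,\beta_t^\perp(x,y)$, and the fact that a product (and a positive power) of non-increasing positive functions is non-increasing, the full statement of Theorem~\ref{t:mcp} follows once we prove:
\begin{itemize}
\item[(i)] $\beta_t^{\y_2,(\kappa_c)}/t$ is non-increasing on $(0,1]$, whenever $\kappa_c\geq0$;
\item[(ii)] $\beta_t^{\y_1,\diag(\kappa_b,\kappa_a)}/t^4$ is non-increasing on $(0,1]$, whenever $\kappa_b\geq0$ and $\kappa_b^2+4\kappa_a\geq0$.
\end{itemize}
Throughout we are in the regime in which the model problems have no conjugate point in $(0,1)$: this is forced by $(x,y)\notin\cut(M)$, since a conjugate point of a model before time $1$ would, by the comparison of conjugate times underlying Theorem~\ref{t:srbericci-sharp}, produce one for $\gamma$.

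\textbf{Step 2: the two LQ models.} Inequality (i) is elementary: the length-$1$ model is $\dot x=u$, $C(u)=\tfrac12\int_0^1(u^2-\kappa_c x^2)\,dt$, whose distortion coefficient is $\sin(\sqrt{\kappa_c}\,t)/\sin\sqrt{\kappa_c}$ for $0<\kappa_c<\pi^2$ (and $t$ for $\kappa_c=0$); since $\sin$ is concave on $[0,\pi]$, $t\mapsto\sin(at)/t$ is non-increasing on $(0,1]$ for $a\in(0,\pi]$. For (ii), solving the Hamiltonian system of the length-$2$ model (Proposition~\ref{p:LQdist}) reduces it to the scalar equation
\begin{equation*}
y^{(4)}+\kappa_b\,y''-\kappa_a\,y=0,
\end{equation*}
with characteristic polynomial $\lambda^4+\kappa_b\lambda^2-\kappa_a=(\lambda^2-\mu_+)(\lambda^2-\mu_-)$, where $\mu_\pm=\tfrac12\bigl(-\kappa_b\pm\sqrt{\kappa_b^2+4\kappa_a}\bigr)$, so $\mu_++\mu_-=-\kappa_b$, $\mu_+\mu_-=-\kappa_a$. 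The hypothesis $\kappa_b^2+4\kappa_a\geq0$ says exactly that $\mu_\pm$ are real, and $\kappa_b\geq0$ that $\mu_++\mu_-\leq0$. Writing $c_\mu(t),s_\mu(t)$ for the solutions of $z''=\mu z$ with $(z,z')(0)=(1,0)$, resp.\ $(0,1)$ (hyperbolic, trigonometric or affine according to the sign of $\mu$), a short computation yields
\begin{equation*}
\beta_t^{\y_1,\diag(\kappa_b,\kappa_a)}=\frac{W(t)}{W(1)},\qquad W(t)=\frac{2\bigl(1-c_{\mu_+}(t)c_{\mu_-}(t)\bigr)-\kappa_b\,s_{\mu_+}(t)s_{\mu_-}(t)}{\kappa_b^2+4\kappa_a},
\end{equation*}
a positive function on $(0,1]$ with $W(t)=\tfrac1{12}t^4+O(t^6)$ as $t\to0$; the confluent case $\kappa_b^2+4\kappa_a=0$ (i.e.\ $\mu_+=\mu_-$) is the evident limit, with the same expansion.

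\textbf{Step 3: the main inequality.} It remains to show that $W(t)/t^4$ is non-increasing on $(0,1]$, i.e.\ $t\,W'(t)\leq4\,W(t)$ for $t\in(0,1]$, under $\kappa_b\geq0$ and $\kappa_b^2+4\kappa_a\geq0$; equivalently, writing $N$ for the numerator of $W$, that $t\,N'(t)\leq4\,N(t)$. This is the step I expect to be the main obstacle. Two routes seem natural. First, a power-series argument: expand $N(t)=\sum_{m\geq2}P_m(\mu_+,\mu_-)\,t^{2m}$ with $P_m$ symmetric of degree $m$, re-express $P_m$ through $\sigma:=\mu_++\mu_-=-\kappa_b\leq0$ and $\delta^2:=(\mu_+-\mu_-)^2=\kappa_b^2+4\kappa_a\geq0$, and verify that the coefficients of $4N(t)-t\,N'(t)$ obey a sign pattern forcing non-negativity on $(0,1]$; the constraints $\sigma\leq0$, $\delta^2\geq0$ should be exactly what makes the pattern work. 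Second, more in the spirit of the rest of the paper, a Riccati/Sturm comparison directly on $y^{(4)}+\kappa_b y''-\kappa_a y=0$: the logarithmic derivative of $W$ solves a matrix Riccati equation which, under the two hypotheses, can be bounded direction-by-direction on the two invariant subspaces by the one of the flat model $\kappa_a=\kappa_b=0$, whose distortion is $t^4$, yielding $t\,W'(t)\leq4\,W(t)$. I would try the Riccati route first; the delicate point is that here one compares two LQ models with the same Young diagram but different potentials, and the dependence on the potential is monotone with opposite signs on the two superboxes — the mixed-sign situation ($\kappa_a$ of either sign, $\kappa_b\geq0$) being precisely what makes the single quadratic condition $\kappa_b^2+4\kappa_a\geq0$ the natural one rather than two linear ones.

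\textbf{Step 4: optimality of the exponent.} Taking $\kappa_a=\kappa_b=\kappa_c=0$ turns (i) and (ii) into equalities, so the bound produced is exactly $\beta_t(x,y)\geq t^{k+3(n-k)}$. To see it cannot be lowered, take a step-$2$ Carnot group with horizontal distribution of rank $k$ and second layer of dimension $n-k$ — which exists since $2k-n\geq1$, i.e.\ $n-k\leq k-1$, with the brackets chosen so that a generic horizontal direction brackets onto the whole second layer — equipped with its Haar measure, so $\rho_{\mis,\g}\equiv0$. Along a generic straight horizontal geodesic the curve has growth vector equal to that of Figure~\ref{f:YdMCP} and vanishing canonical curvature, so its distortion coefficient coincides with that of the flat model, namely $\beta_t(x,y)=t^{k+3(n-k)}$ for all $t\in[0,1]$. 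Hence no exponent strictly smaller than $k+3(n-k)$ can satisfy \eqref{eq:mcptheorem}. For $k=2$, $n=3$ this recovers the sharp Heisenberg exponent $5$.
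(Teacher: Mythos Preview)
Your reduction in Step~1 and the computation in Step~2 are fine; the gap is Step~3, which you leave essentially as a conjecture. The paper closes exactly this gap by a trick you are circling around but do not see: monotone reduction to the boundary case $4\kappa_a+\kappa_b^2=0$.

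Concretely, given $\kappa_b\geq 0$ and $4\kappa_a+\kappa_b^2\geq 0$, set $\kappa_1:=\kappa_b$ and $\kappa_2:=-\kappa_b^2/4$, so that $\kappa_2\leq\kappa_a$ and hence $Q':=\diag(\kappa_1,\kappa_2)\leq Q:=\diag(\kappa_b,\kappa_a)$ as symmetric matrices. Riccati comparison between the two LQ models with the \emph{same} $(A,B)$ but potentials $Q'\leq Q$ gives that $t\mapsto\beta_t^{\y_1,Q}/\beta_t^{\y_1,Q'}$ is non-increasing on $(0,1]$. It therefore suffices to prove (ii) in the confluent case $4\kappa_2+\kappa_1^2=0$, $\kappa_1\geq 0$. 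There the model coefficient has the closed form of Section~\ref{s:subcase2}, and Lemma~\ref{l:subcase2} reduces (ii) to an elementary one-variable inequality. Your worry that ``the dependence on the potential is monotone with opposite signs on the two superboxes'' is a misreading: the Riccati comparison is monotone in $Q$ in the full matrix sense. What blocks a direct comparison with the flat model $Q_0=\mathbbold{0}$ is simply that $\kappa_a$ may be negative, so $Q\not\geq Q_0$; the boundary case $Q'$ is the largest potential below $Q$ for which the explicit formula of Section~\ref{s:subcase2} is available.

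Two smaller remarks. First, the paper actually invokes Theorem~\ref{t:srbericci1} rather than Theorem~\ref{t:srbericci-sharp}: since only $\kappa_c\geq 0$ is used, one may take $\kappa_c=0$ from the start and obtain the factor $t^{2k-n}$ directly from the length-$1$ level, bypassing your step (i). Your route through Theorem~\ref{t:srbericci-sharp} is equally valid and explains the factor $(2k-n-1)$ in the hypothesis on $\Ric^c$. Second, for optimality the paper does not construct an example; it uses that for \emph{any} ample equiregular geodesic with this Young diagram one has $\beta_t(x,y)\sim c\,t^{\mathcal{N}_\lambda}$ as $t\to 0$ with $\mathcal{N}_\lambda=k+3(n-k)$ the geodesic dimension (cf.\ \cite[Lemma~6.27]{curvature}). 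This yields optimality for the given pair $(x,y)$, which is stronger and shorter than your Carnot group construction.
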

\begin{rmk}
For fat distributions\footnote{A distribution $\distr$ is fat if for any non-zero $X\in \Gamma(\distr)$, $TM$ is locally generated by $X$ and $[X,\distr]$.} of rank $k$ on a $n$-dimensional manifold, all non-trivial geodesics have the same Young diagram with two columns. The exponent $\mathcal{N} = k+3(n-k)$ is equal to the \emph{geodesic dimension} of the sub-Riemannian manifold, defined in \cite{curvature} (see also \cite{R-MCP} for a definition on metric measure spaces).
\end{rmk}

We apply our results to Sasakian and $3$-Sasakian structures in Section \ref{s:applications}, to which we refer for precise statements. For brevity we present here, as an example, the main result concerning 3-Sasakian structures.

\begin{theorem}\label{t:3Sas}
Let $(M,\distr,g)$ be a 3-Sasakian manifold of dimension $4d+3$, equip\-ped with its canonical measure.
Assume that, for every non-zero $X \in \distr$
\begin{equation}\label{eq:boundrhoa}
\Sec(X \wedge Y) \geq K \geq -9, \qquad \forall\, Y \in \spn\{\phi_I X,\phi_J X,\phi_K X\},
\end{equation}
where $\Sec$ is the Riemannian sectional curvature of the $3$-Sasakian structure. Then $\beta_t(x,y)/t^{4d+9}$ is a non-increasing of $t\in (0,1]$ and $(x,y)\notin \cut(M)$. In particular
\begin{equation}
\beta_t(x,y) \geq t^{4d+9}, \qquad \forall t\in [0,1],
\end{equation}
and the exponent is optimal.
\end{theorem}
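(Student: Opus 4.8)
The plan is to reduce Theorem~\ref{t:3Sas} to Theorem~\ref{t:srbericci-sharp} by checking that the geometric hypothesis \eqref{eq:boundrhoa} implies the required superbox Ricci lower bounds for the canonical curvature of a 3-Sasakian structure, and that the canonical measure makes $\rho_{\mis,\g} \leq 0$ along every geodesic. First I would recall from the theory of Sasakian/3-Sasakian sub-Riemannian structures (and the companion computations in \cite{curvature,BR-connection}) that every nontrivial normal geodesic of a 3-Sasakian manifold of dimension $4d+3$ is ample and equiregular with a fixed Young diagram: a first level of size $4d$ and length $2$, carrying superboxes $b$ (first column) and $a$ (second column), and a second level of size $3$ and length $1$, carrying the superbox $c$ corresponding to the three Reeb-type directions. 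In particular $n = 4d+3$, $k = 4d$, so $2k-n = 4d-3$ and the "direction of motion" convention of Theorem~\ref{t:srbericci-sharp} reduces the effective size of the length-$1$ level from $3$ to $2$. With $N$ taken formally equal to $n$ (legitimate here by Remark~\ref{rmk:formally} once $\rho_{\mis,\g}\le 0$ is established), Theorem~\ref{t:srbericci-sharp} will give $\beta_t(x,y) \geq t\cdot\prod_\lev (\beta_t^{\y_\lev,Q_\lev})^{r_\lev/(n-1)}$, and the point is to choose the $\k$'s so that each model factor is just a power of $t$.

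The core computation is to express $\Ric^a_\gamma(t)$, $\Ric^b_\gamma(t)$, $\Ric^c_\gamma(t)$ in terms of Riemannian sectional curvatures of the 3-Sasakian metric evaluated on the horizontal $2$-planes $X\wedge\phi_I X$, $X\wedge\phi_J X$, $X\wedge\phi_K X$ and of the curvature in horizontal directions orthogonal to this quaternionic line, where $X=\dot\gamma(t)$ is the (unit) horizontal velocity. Using $\Sec \geq K$ on the relevant planes, together with the rigid algebraic relations forced by the 3-Sasakian identities (the three almost complex structures $\phi_I,\phi_J,\phi_K$ and the relation $\operatorname{Sec}(X\wedge\phi_\bullet X)\ge$ something involving the constant curvature $-3$ contribution of the contact term), one gets affine-in-$K$ lower bounds for the three Ricci superbox traces. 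I would then solve for $\k_a,\k_b,\k_c$ as explicit affine functions of $K$ and verify that the structural constraint $K\ge -9$ is exactly what makes \eqref{eq:conditions2}-type conditions hold — i.e. $4\k_a+\k_b^2\ge 0$, $\k_b\ge 0$, $\k_c\ge 0$ — so that each model distortion coefficient $\beta_t^{\y_\lev,Q_\lev}$ collapses to the pure power $t^{\ell_\lev}$ (length $2$ for the first level giving $t^2$ per row, length $1$ for the second level giving $t$ per row). Counting exponents: the first level contributes $r_b + r_a$ rows but as a two-box row its model coefficient is a power $t^{\,p}$ with $p$ computed from Proposition~\ref{p:LQdist} at the critical value of the $\k$'s; the length-$1$ level contributes $t^{1}$ per (effective) row. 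Adding the overall $t$ from taking out the direction of motion, the bookkeeping should yield precisely the exponent $4d+9 = k + 3(n-k)$ with $k=4d$, $n-k=3$.

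A parallel, slightly lower-level route — which I would actually prefer for transparency — is to bypass the general product formula and instead directly plug the 3-Sasakian canonical curvature operator $\Rcan_\gamma(t)$ into Theorem~\ref{t:srbe}/Theorem~\ref{t:srbericci-sharp} by exhibiting an explicit symmetric matrix $Q$ (block-diagonal along the canonical frame, with blocks dictated by the Young diagram and entries affine in $K$) such that $\Rcan_\gamma(t)\ge Q$ pointwise; then compute $\beta_t^{\y,Q}$ by solving the associated linear Hamiltonian system from Section~\ref{s:LQ}. Because the 3-Sasakian structure is "quaternionically homogeneous" along any geodesic, this Hamiltonian system decouples into one scalar-type channel per $\phi$-direction plus the length-$1$ contact channel, each a constant-coefficient second-order ODE; its fundamental solution is elementary (trigonometric/hyperbolic/polynomial according to the sign of the corresponding eigenvalue), and under the threshold $K=-9$ all eigenvalues land exactly on the boundary case that produces polynomial (power-of-$t$) distortion. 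Establishing $\rho_{\mis,\g}\le 0$ is the easy preliminary: for the canonical measure (Popp-type / the natural 3-Sasakian volume) the canonical frame is adapted to a parallel-type structure and $\mis_{\gamma(t)}(X_1(t),\dots,X_n(t))$ is constant, so in fact $\rho_{\mis,\g}\equiv 0$; I would record this and then invoke the $\rho\le 0$ branch.

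The main obstacle I expect is the curvature bookkeeping in the middle step: correctly identifying which Riemannian sectional (and mixed) curvatures enter each superbox trace $\Ric^a,\Ric^b,\Ric^c$ — the splitting of $T_{\gamma(t)}M$ into the invariant subspaces $S^{\lev_i}_{\gamma(t)}$ is not the naive horizontal/vertical splitting but a twisted one coming from the canonical frame construction of \cite{lizel,BR-connection}, and getting the coefficients (the $-3$'s, $-9$'s, and the factor relating $\operatorname{Sec}$ to $\Rcan$) exactly right is where sign and normalization errors creep in. The optimality claim ("the exponent is optimal") is comparatively routine: it follows by checking that on the model LQ problem at the boundary parameter $K=-9$ — equivalently, on a left-invariant 3-Sasakian model such as the quaternionic Heisenberg group or $\mathrm{Sp}(d+1)/\mathrm{Sp}(d)$ with the appropriate normalization — the inequality $\beta_t(x,y)\ge t^{4d+9}$ is an equality for $(x,y)$ near the diagonal, so no smaller exponent can work; I would cite the geodesic-dimension computation of \cite{curvature} for this and spell out the one model computation in an example.
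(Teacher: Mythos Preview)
Your Young diagram is wrong, and this derails the whole verification. For a 3-Sasakian structure of dimension $n=4d+3$ with distribution rank $k=4d$, the two-column diagram has first level of size $n-k=3$ and length $2$ (superboxes $b$ and $a$), and second level of size $2k-n=4d-3$ and length $1$ (superbox $c$); you have the sizes swapped. In particular your claim that the length-$1$ level ``corresponds to the three Reeb-type directions'' is backwards: the three vertical directions sit in the second column (superbox $a$), while superbox $c$ collects the $4d-3$ horizontal directions orthogonal to $\dot\gamma,\phi_I\dot\gamma,\phi_J\dot\gamma,\phi_K\dot\gamma$. With the wrong sizes $r_\lev$ the normalized Ricci bounds $\tfrac{1}{r_\lev}\Ric^{\lev_i}_\gamma\ge\k_{\lev_i}$ would be checked against the wrong constants and the argument would not close.

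The paper's route is also more direct than what you sketch: it applies Theorem~\ref{t:mcp} (the ready-made two-column statement, which already packages the $t^4$ bound of Lemma~\ref{l:subcase2}) rather than the general Theorem~\ref{t:srbericci-sharp}, and it quotes the explicit Ricci formulas from \cite{RS17} rather than recomputing them. These give, for a unit-speed geodesic with parameters $v=(v_I,v_J,v_K)$, $\tfrac{1}{3}\Ric^b_\gamma=4+5\|v\|^2$, $\Ric^c_\gamma=(4d-4)(1+\|v\|^2)$, and $\tfrac{1}{3}\Ric^a_\gamma=\tfrac{3}{4}\varrho(v)-\tfrac{7}{2}\|v\|^2-\tfrac{15}{8}\|v\|^4$, together with $\rho_{\mis,\gamma}\equiv 0$. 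The sectional bound \eqref{eq:boundrhoa} gives $\varrho(v)\ge 2K\|v\|^2$, and then the condition $4\k_a+\k_b^2\ge 0$ of Theorem~\ref{t:mcp} becomes the quadratic-in-$\|v\|^2$ inequality $\tfrac{35}{2}\|v\|^4+(26+6K)\|v\|^2+16\ge 0$, which holds for all $v$ as soon as $K\ge -\tfrac{1}{3}(13+2\sqrt{70})\approx -9.91$. So $K\ge -9$ is merely a convenient sufficient integer, not the borderline case where ``eigenvalues land exactly on the polynomial boundary'' as you suggest; the model coefficient never literally collapses to $t^4$, one only uses that $\beta_t^{\y_\lev,Q_\lev}/t^4$ is non-increasing (Lemma~\ref{l:subcase2}). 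Optimality of the exponent $4d+9=k+3(n-k)$ is exactly the geodesic-dimension argument you cite.
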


As a consequence of \cite[Thm.\ 9]{BR-G1}, the bounds $\beta_t(x,y) \geq t^N$ in the above statements are equivalent to a weighted Brunn-Minkowski inequality of the form
\begin{equation}
\mis(Z_t(A,B))^{1/n} \geq (1-t)^{N/n} \mis(A)^{1/n} + t^{N/n}\mis(B)^{1/n}, \qquad \forall t\in[0,1],
\end{equation}
for all Borel sets $A,B \subset M$, and to the $\mathrm{MCP}(0,N)$. They are also equivalent to suitable (sub-)Laplacian comparison theorems, see Section \ref{s:laplacian}. 

In particular, Theorem \ref{t:3Sas} has the consequence that all 3-Sasakian manifolds of dimension $4d+3$ satisfying suitable curvature bounds satisfy the $\mathrm{MCP}(0,N)$ for all $N\geq 4d+9$. This results contributes to the list of sub-Riemannian structures satisfying a measure contraction properties \cite{Riff-Carnot,BR-MCP,R-MCP,BR-MCPHtype,AAPL-3D,LLZ-Sasakian}.

\subsection{Equivalence to sub-Laplacian comparison}\label{s:laplacian}

Comparison results for distortion coefficients are equivalent to comparison theorem for the sub-Laplacian of the sub-Riemannian distance. This fact is known, and its proof in specific cases can be found in \cite[Prop.\ 9]{BR-MCP} and in \cite[Sec.\ 6.2]{Ohta14} for the Riemannian case.

In what follows $\Delta_{\mis}$ denotes the sub-Laplacian on $M$ associated with the measure  $\mis$, i.e.\ the generator of the Dirichlet form $Q(u)=\int_{M}\|\nabla_{SR} u\|^{2}d\mis$, where $\nabla_{SR} u$ is the sub-Riemannian gradient of $u$.
\begin{theorem}
Let $(M,\distr,g)$ be an ideal\footnote{A sub-Riemannian manifold is ideal if it does not admit non-trivial singular minimizing geodesics. This hypothesis can be omitted if, instead, we assume that $\gamma(t) \notin \cut(x)$ for all $t\in (0,1]$.} sub-Riemannian manifold. Let $y \notin \cut(x)$, and let $\gamma:[0,1]\to M$ be the unique geodesic joining $x$ with $y$. Then, letting $\f_x(\cdot) = \tfrac{1}{2}d^2_{SR}(x,\cdot)$, we have
\begin{equation}
\Delta_{\mis} \f_x(\gamma(t)) = t\frac{d }{dt} \log\beta_t(x,y),  \qquad \forall t\in(0,1].
\end{equation}
Hence, for any smooth $h :(0,1] \to \R_+$, with $h(1) =1$, the following are equivalent:
\begin{itemize}
\item the function $t \mapsto \beta_t(x,y)/h(t)$ is non-increasing on $(0,1]$;
\item $\Delta_{\mis} \f_x(\gamma(t)) \leq t \frac{d}{dt}\log h(t)$ for all $t \in (0,1]$.
\end{itemize}
In particular, both statements imply that $\beta_t(x,y) \geq h(t)$ for all $t \in [0,1]$.
\end{theorem}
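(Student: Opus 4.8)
The plan is to derive the identity $\Delta_\mis \f_x(\gamma(t)) = t\,\tfrac{d}{dt}\log\beta_t(x,y)$ first, since the equivalence of the two bullet points and the final inequality follow from it by elementary calculus. The key observation is that the distortion coefficient has a dynamical interpretation: if $\Phi^t$ denotes the map sending a point $y'$ near $y$ to the $t$-intermediate point of the unique geodesic from $x$ to $y'$ (well defined and smooth on a neighbourhood of $y$, because $(x,y)\notin\cut(M)$ and, under the ideal assumption or the hypothesis $\gamma(s)\notin\cut(x)$ for $s\in(0,1]$, also for all intermediate times), then $\beta_t(x,y)$ is exactly the Jacobian of $\Phi^t$ at $y$ with respect to the measure $\mis$, i.e.\ $\Phi^t_*(\mis\llcorner \mc{B}_r(y))$ has density $\beta_t(x,y)^{-1}$ at $\gamma(t)$ in the limit $r\to 0$. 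Equivalently, writing $\f_x$ for $\tfrac12 d^2_{SR}(x,\cdot)$ and noting that the geodesic flow is generated by the Hamiltonian vector field of $\f_x$ composed with rescaling in time, the intermediate-point map is the flow at time $1$ of the (time-dependent) vector field whose value at $\gamma(t)$ is $\tfrac1t\nabla_{SR}\f_x(\gamma(t))$ — here one uses that geodesics from $x$ are the characteristics of $\f_x$ and that the $t$-intermediate point is reached by following $\nabla_{SR}\f_x$, suitably reparametrised.

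Next I would differentiate the Jacobian identity in $t$. The logarithmic derivative of a Jacobian along a flow equals the divergence (with respect to $\mis$) of the generating vector field evaluated along the trajectory; since the generator at $\gamma(t)$ is $\tfrac1t\nabla_{SR}\f_x$, and since the $\mis$-divergence of $\nabla_{SR}\f_x$ is by definition $\Delta_\mis\f_x$, one gets
\begin{equation}
\frac{d}{dt}\log\beta_t(x,y) = \frac{1}{t}\,\Delta_\mis\f_x(\gamma(t)),
\end{equation}
which is the claimed identity after multiplying by $t$. Some care is needed because the vector field $\tfrac1t\nabla_{SR}\f_x$ is singular at $t=0$ and because one must check that the flow picture is exactly the intermediate-point map rather than merely conjugate to it; this is where the bulk of the rigorous work lies, and it should be handled by differentiating the defining relation $\Phi^t(y')=$ ($t$-point of the geodesic from $x$ to $y'$) directly, using the smooth dependence of geodesics on endpoints away from $\cut(M)$, rather than by an abstract flow argument.

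Finally, the equivalence is pure calculus. Given smooth $h:(0,1]\to\R_+$ with $h(1)=1$, the function $t\mapsto\beta_t(x,y)/h(t)$ is non-increasing on $(0,1]$ iff its logarithmic derivative is $\leq 0$ there, i.e.\ iff $\tfrac{d}{dt}\log\beta_t(x,y)\leq \tfrac{d}{dt}\log h(t)$, i.e.\ (using the identity and $t>0$) iff $\Delta_\mis\f_x(\gamma(t))\leq t\,\tfrac{d}{dt}\log h(t)$ for all $t\in(0,1]$. Non-increasingness together with the normalisation at $t=1$, namely $\beta_1(x,y)/h(1)=\beta_1(x,y)=1$ (the $1$-intermediate point map is the identity, so its Jacobian is $1$), forces $\beta_t(x,y)/h(t)\geq 1$ for $t\in(0,1]$, and the case $t=0$ follows by continuity since $\beta_0(x,y)=0=h(0^+)$ or by a direct limiting argument on the asymptotics $\beta_t\sim t^{\mathcal N}$. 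I expect the main obstacle to be the careful justification that $\log\beta_t(x,y)$ is differentiable in $t$ with the stated derivative — i.e.\ making the Jacobian/divergence computation rigorous, including the behaviour of the intermediate-point map as $t\to 0$ — whereas the calculus step and the final inequality are routine.
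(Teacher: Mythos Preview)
Your approach is correct and coincides with the paper's own argument: both identify $\beta_t(x,y)$ as the $\mis$-Jacobian of the intermediate-point map at $y$, observe that this map is driven by the vector field $\tfrac{1}{t}\nabla_{SR}\f_x$ along the geodesic, and then use that the logarithmic derivative of a Jacobian along a flow equals the $\mis$-divergence of the generator, which here is $\tfrac{1}{t}\Delta_\mis\f_x(\gamma(t))$. The paper's proof is in fact terser than yours (it records only the key identity $\dot\gamma(t)=\nabla_{SR}\f_x(\gamma(t))$---more precisely $t\dot\gamma(t)=\nabla_{SR}\f_x(\gamma(t))$, which is where your factor $1/t$ comes from---cites the divergence formula from the literature, and declares the remaining implications obvious), so your write-up is, if anything, more detailed about the source of the factor $t$ and about the calculus step.
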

\begin{proof}
The first formula is a consequence of the fact that $\dot\gamma(t) = \nabla_{SR} \f_x(\gamma(t))$, and of the definition of divergence. See for example \cite[Prop.\ B.1]{BR-Duke}, or also the proof in  \cite[Prop.\ 9]{BR-MCP}. The remaining implications are obvious.
\end{proof}

\subsection{Maximal length bounds}

It is worth mentioning that, from the proof of the above theorems, one can recover a bound for the maximal length of minimizing geodesics, obtained in \cite{BR-comparison}, to which we refer for details.

\begin{theorem}
Let $\gamma:[0,T] \to M$ be a length-parametrized minimizing geodesic, ample and equiregular, with Young diagram $\y$. Assume that there exists a level $\lev$ with size $r_\lev$ and length $\ell_\lev$, and $\k_{i}\in \R$, for $i=1,\ldots,\ell_{\lev}$, such that
\begin{equation}
\frac{1}{r_\lev}\Ric_{\gamma}^{\lev_i}(t) \geq \k_{i},   \qquad \forall t\in [0,T],
\end{equation}
with the convention that, if $\lev$ is the level of length $1$, then $r_\lev$ is replaced by $r_\lev-1$. Then $\ell(\g) \leq t_c(\k_{1},\ldots,\k_{\ell_\lev})$, where the latter is the first conjugate time of the LQ problem whose Young diagram has a single row of length $\ell$ and $Q=\mathrm{diag}(\k_{1},\ldots,\k_{\ell_\lev})$.
\end{theorem}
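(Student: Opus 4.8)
The plan is to connect the maximal-length bound to the comparison machinery already developed for distortion coefficients by exploiting the microlocal nature of the model spaces. The starting observation is that the conclusions of Theorems \ref{t:srbericci1}--\ref{t:srbericci-sharp} are genuinely \emph{local along the geodesic}: the comparison of $\beta_t(x,y)$ with the model coefficient $\beta_t^{\y_\lev,Q_\lev}$ rests on a (partial) Riccati comparison between the Jacobi curve of $\gamma$ and the Jacobi-type flow of the LQ model with matrices $A,B$ determined by $\y_\lev$ and potential $Q=\diag(\k_1,\dots,\k_{\ell_\lev})$. Since only the curvature on the invariant subspaces $S^{\lev_i}_{\gamma(t)}$ enters, one does not need a bound on the full $\Rcan_\gamma(t)$; a bound on a single level $\lev$ suffices to control the corresponding block of the Riccati solution. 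The first step, then, is to isolate from the proof of Theorem \ref{t:srbericci-sharp} the scalar (or block) Riccati inequality governing the $\lev$-block of $\tr$ of the Jacobi matrix, and to recall that a solution of the true Riccati equation that stays finite on $[0,T]$ forces the comparison LQ Riccati solution also to stay finite on $[0,T]$ --- equivalently, $T$ cannot exceed the first conjugate time $t_c(\k_1,\dots,\k_{\ell_\lev})$ of that LQ model.

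Concretely, I would argue by contradiction. Suppose $\gamma:[0,T]\to M$ is minimizing, ample and equiregular, with the stated Ricci bound on the level $\lev$, and suppose $T > t_c(\k_1,\dots,\k_{\ell_\lev})$. Because $\gamma$ is minimizing, it has no conjugate points in $(0,T)$, so the relevant Jacobi matrix solution $V(t)$ (built from the canonical frame, with $\dot V V^{-1}$ the matrix Riccati variable) is defined and finite on all of $(0,T)$; in particular the partial trace over $S^{\lev}_{\gamma(t)}$ is finite there. Restricting the matrix Riccati equation to the $\lev$-superboxes and tracing with the technique of \cite{BR-connection} yields a scalar Riccati-type inequality whose coefficients are exactly those of the single-row LQ problem with $Q=\diag(\k_1,\dots,\k_{\ell_\lev})$, with the $r_\lev$ (resp.\ $r_\lev-1$, if $\lev$ is the length-$1$ level) multiplicity. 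The bound $\Ric^{\lev_i}_\gamma(t)/r_\lev \ge \k_i$ makes the LQ Riccati solution a subsolution (or supersolution, depending on sign conventions) of the traced inequality; by the standard Riccati comparison/blow-up argument, if the LQ solution blows up at $t_c \le T$, so must the traced quantity, contradicting finiteness on $(0,T)$. Hence $T \le t_c(\k_1,\dots,\k_{\ell_\lev})$, i.e.\ $\ell(\gamma) = T \le t_c(\k_1,\dots,\k_{\ell_\lev})$.

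The bookkeeping for the length-$1$ level --- where one factors out the direction of motion and replaces $r_\lev$ by $r_\lev-1$ --- is handled exactly as in the passage from Theorem \ref{t:srbericci2} to Theorem \ref{t:srbericci-sharp}: along $\gamma$ one has $\Rcan_\gamma(t)(\dot\gamma,\dot\gamma)=0$ by homogeneity of the Hamiltonian, so the block of $S^{\lev}_{\gamma(t)}$ corresponding to the motion carries no curvature and drops out of the traced Riccati inequality, lowering the effective multiplicity by one. I would simply invoke this reduction rather than redo it. One should also note that finiteness of the Riccati solution on the \emph{open} interval $(0,T)$, together with the model LQ solution being smooth up to just before its first conjugate time, is exactly what is needed: the comparison is run on $(0,T)$ and the contradiction is that the model blows up strictly before $T$.

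The main obstacle, and the only genuinely delicate point, is the tracing step: making precise that the partial trace of the \emph{matrix} Riccati inequality over the invariant subspace $S^\lev_{\gamma(t)}$ produces \emph{exactly} the scalar Riccati equation of the single-row LQ model with the claimed coefficients, including the correct normalization by $r_\lev$ and the correct placement of the entries $\k_i$ in $Q_\lev=\diag(\k_1,\dots,\k_{\ell_\lev})$. This is precisely where the structure of the Young diagram and the canonical frame enter, and where the technique of \cite{BR-connection} does the work; once that identification is in hand, the comparison is the usual one-dimensional Riccati blow-up argument. Since all of this is already carried out in \cite{BR-comparison} and is a byproduct of the proofs of Theorems \ref{t:srbericci1}--\ref{t:srbericci-sharp}, I would present the argument in the compressed form above and refer to \cite{BR-comparison} for the detailed tracing computation.
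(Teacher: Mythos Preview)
Your proposal is correct and follows essentially the same route the paper indicates: the splitting/tracing of the matrix Riccati equation over a single level $\lev$ (as in the proof of Theorem~\ref{t:srbericci1}), together with Riccati comparison against the LQ model $\mathrm{LQ}(\y_\lev;Q_\lev)$, forces the traced block $V_\lev(t)$ to blow up no later than the model does at $t_c(\k_1,\dots,\k_{\ell_\lev})$, contradicting the absence of interior conjugate points along a minimizing geodesic. Two small cosmetic points: the Riccati variable is $V = MN^{-1}$ (not $\dot V V^{-1}$), and the traced inequality is an $\ell_\lev\times\ell_\lev$ \emph{matrix} Riccati, not a scalar one, unless $\ell_\lev=1$.
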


Conditions on $\k_1,\ldots,\k_{\ell_\lev}$ such that $t_c(\k_{1},\ldots,\k_{\ell_\lev}) < +\infty$ can be found simply applying the main result in \cite{ARS-LQ}. We just give two examples. If $\lev$ is a level of length $\ell_\alpha =1$, then the condition is $\kappa_1 >0$, in which case 
\begin{equation}
t_c(\k_1) = \frac{\pi}{\sqrt{\k_1}}.
\end{equation}
If $\lev$ is a level of length $\ell_\alpha=2$, the conditions are
\begin{equation}
\begin{cases} \k_1 >0, \\
\k_1^2 +4\k_2 >0,
\end{cases} \qquad \text{or} \qquad \begin{cases} \k_1 \leq 0, \\
\k_2 >0,
\end{cases}
\end{equation}
in which case
\begin{equation}
t_c(\kappa_1,\kappa_2) \leq \frac{2\pi}{\mathrm{Re}(\sqrt{x+y}-\sqrt{x-y})},\qquad x=\frac{\k_1}{2}, \quad y = \frac{\sqrt{\k_1^2+4\k_2}}{2}.
\end{equation}
These results yield the sharp diameter of the standard sub-Riemannian structure on the Hopf fibrations $\mathbb{S}^1 \hookrightarrow \mathbb{S}^{2d+1} \to \mathbb{CP}^d$ and the quaternionic Hopf fibrations $\mathbb{S}^3 \hookrightarrow \mathbb{S}^{2d+3} \to \mathbb{HP}^d$.

\subsection{Related literature}

The notion of sub-Riemannian curvature we consider in this paper has been developed starting from the pioneering works of Agrachev-Zelenko \cite{agzel1,agzel2} and Zelenko-Li \cite{lizel} and then subsequently developed in \cite{curvature,BR-comparison,BR-connection}. Several applications of this theory, such as Bonnet-Myers theorems and measure contraction properties, have been given in the recent years in \cite{AAPL-3D,LLZ-Sasakian,BR-comparison,RS17,ABR-contact,BI19}.

A different approach to sub-Riemannian curvature, based on the extension of Bochner-type formulas and curvature-dimension inequalities, has been proposed by Baudoin, Garofalo and collaborators (see \cite{BG-CD}, \cite{Baudoin-EMS} and references therein). This technique has been implemented efficiently for sub-Riemannian structures induced on the horizontal bundle of totally geodesic Riemannian foliations.

An analysis based on the canonical variation of the index form has been successfully implemented on Sasakian foliations in \cite{BGKT-Sasakian}. The same idea is applied in \cite{BGMR-Htype-comparison} to H-type foliations with parallel Clifford structure, introduced in \cite{BGMR-Htype-structure}, which extends to higher corank all the nice features of Sasakian foliations. Let us mention also \cite{Rumin,Chanillo,Hughen} for a related approach, based on the Riemannian Jacobi equation, on $3$-dimensional contact manifolds.

The study of interpolation inequalities in the Heisenberg group have been initiated in \cite{BKS-geomheis} (see also \cite{BKS-geomcorank1} for corank $1$ Carnot groups). Then, in \cite{BR-G1}, the authors proved that any ideal sub-Riemannian manifold supports interpolation inequalities, provided that the geometry is taken into account through suitable distortion coefficients. These works were motivated by the first crucial observation that classical Brunn-Minkowski type inequalities modelled on Riemannian space forms are not satisfied in the sub-Riemannian setting \cite{Juillet} (see also the recent \cite{JuilletBM}). 

For simplicity in this paper we focus only on the sub-Riemannian case. Nevertheless, the concept of curvature used here is purely Hamiltonian and permits to recover analogue results in the Finsler setting, such as those considered in \cite{Ohta09,Wu07}.

\subsection{Structure of the paper}
Models spaces are explained in Section \ref{s:LQ}, while the model distortion coefficient and its properties are studied for some special cases in Section \ref{s:somemodels}. The main results are proved in Sections \ref{s:proofs} and \ref{s:proofsricci}. There we use two important technical ingredients: the theory of sub-Riemannian curvature  and canonical moving frames, and general comparison theory for matrix Riccati equations with limit initial data. Readers who are not familiar with these tools are advised to go through Appendices \ref{s:prel} and \ref{s:riccati}, respectively, before reading Section \ref{s:proofs} and \ref{s:proofsricci}. Finally, in Section \ref{s:applications}, we specify our results to Sasakian and $3$-Sasakian structures. 

\subsection*{Acknowledgements}
This work was supported by the Grants ANR-15-CE40-0018 and ANR-18-CE40-0012 of the ANR, and by a public grant as part of the Investissement d'avenir project, reference ANR-11-LABX-0056-LMH, LabEx LMH, in a joint call with the ``FMJH Program Gaspard Monge in optimization and operation research''. 

\section{Linear Quadratic problems}\label{s:LQ}

Linear quadratic optimal control problems (LQ in the following) are a classical topic in optimal control theory. They are variational problems in $\R^n$ with a quadratic cost  and linear dynamics. We briefly recall their general features, and we refer to \cite[Ch.\ 16]{Agrachevbook}, \cite[Ch.\ 1]{Coron} and \cite[Ch.\ 7]{Jurdjevicbook} for further details.

Let $A,B$ be $n\times n$ matrices, with $B \geq 0$ and symmetric. Letting $k\leq n$ be the rank of $B$, there exist $b_1,\dots,b_k\in \R^n$, unique up to orthogonal transformations, such that $B = \sum_{i=1}^k b_i b_i^*$. Let also $Q$ be a symmetric $n\times n$ matrix, and $T>0$. We are interested in \emph{admissible trajectories}, namely  curves $x:[0,T]\to \mathbb{R}^n$ for which there exists a \emph{control} $u \in L^2([0,T],\mathbb{R}^k)$ such that\footnote{Our notation differs from the classical one, where usually $B$ denotes the $n\times k$ matrix whose columns are $b_1,\dots,b_k$ (as done also in \cite{BR-comparison}). With this notation \eqref{eq:lq1} becomes $\dot{x} = Ax + Bu$. 

To pass from the notation used in this paper to the classical one, one should replace $B$ with $BB^*$. The Kalman condition \eqref{eq:kalman} has the same expression with respect to either notation.}
\begin{equation}\label{eq:lq1}
\dot{x} = Ax + \sum_{i=1}^k u_i b_i.
\end{equation}
Thus, we look for admissible trajectories with fixed endpoints  $x(0) = x_0$,  $x(T) = x_1$, that minimize the quadratic functional $C_T: L^2([0,T],\mathbb{R}^k) \to \mathbb{R}$
\begin{equation}\label{eq:lq2}
C_T(u) = \frac{1}{2}\int_{0}^{T} \left(u^* u - x^*Qx \right)dt.
\end{equation}
Admissible trajectories minimizing \eqref{eq:lq2} are called \emph{minimizers}. The vector $Ax$ represents the \emph{drift}, while  $b_1,\dots,b_k$ are the \emph{controllable directions}. The matrix $Q$ is the \emph{potential} of the LQ problem.

We only deal with \emph{controllable} systems, i.e., there exists $m >0$ such that
\begin{equation}\label{eq:kalman}
\rank(B,AB,\ldots,A^{m-1}B) = n.
\end{equation}
Condition \eqref{eq:kalman} is known as \emph{Kalman condition} in control theory. It is equivalent to the fact that, for any choice of $x_0,x_1\in \R^{n}$ and $T>0$, there is a non-empty set of admissible trajectories $x :[0,T] \to \mathbb{R}^n$ joining $x_0$ with $x_1$.

It is well known that the admissible trajectories minimizing \eqref{eq:lq2} are projections $(p,x) \mapsto x$ of the solutions of the Hamilton equations
\begin{equation} \label{eq:eqhhh}
\dot{p}  = -\partial_x H, \qquad \dot{x} = \partial_p H, \qquad (p,x) \in T^*\R^n = \R^{2n},
\end{equation}
where the Hamiltonian function $H: \mathbb{R}^{2n} \to \mathbb{R}$ is defined by
\begin{equation}\label{eq:Hamiltonian}
H(p,x) = \frac{1}{2}\left( p^* B p +2 p^* A x + x^* Q x \right).
\end{equation}
Any LQ problem is uniquely determined by its Hamiltonian function, and vice-versa.

\begin{definition}
We say that $t_*>0$ is a conjugate time if there exists a non-trivial solution of the Hamilton equations \eqref{eq:eqhhh} such that $x(0) = x(t_*) = 0$.
\end{definition}

LQ problems either have no conjugate times, or an infinite and discrete set of them, depending on the Jordan normal form of the Hamiltonian system \cite{ARS-LQ}. The first  (i.e.,\ the smallest) conjugate time $t_c = t_c(A,B,Q)$ determines existence and the uniqueness of solutions of the LQ problem, as specified by the following proposition (see \cite[Sec.\ 16.4]{Agrachevbook}).

\begin{prop}\label{p:LQconj}
Let $t_c$ be the first conjugate time of the Hamiltonian \eqref{eq:Hamiltonian}, and consider the LQ problem \eqref{eq:lq1}-\eqref{eq:lq2}. Then, for any $x_0,x_1\in \R^n$,
\begin{itemize}
\item if $T<t_c$ there exists a unique minimizer connecting $x_0$ with $x_1$ in time $T$;
\item if $T>t_c$ there exists no minimizer connecting $x_0$ with $x_1$ in time $T$;
\item if $T= t_c$ existence of minimizers depends on $x_0,x_1$.
\end{itemize}
\end{prop}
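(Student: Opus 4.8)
The plan is to reduce the proposition to the positivity of the second variation of $C_T$ — which for an LQ problem is a quadratic form independent of the boundary data — and then to match the times at which this form degenerates with the conjugate times. Concretely, fix $x_0,x_1\in\R^n$ and $T>0$. By the Kalman condition \eqref{eq:kalman} the set $\mathcal A$ of controls $u\in L^2([0,T],\R^k)$ whose trajectory \eqref{eq:lq1} joins $x_0$ to $x_1$ is a nonempty affine subspace $\mathcal A=\bar u+\mathcal U_0$, where $\mathcal U_0$ is the closed, finite-codimensional linear subspace of controls whose trajectory issued from $0$ returns to $0$ at time $T$. Restricting the functional \eqref{eq:lq2} to $\mathcal A$ and expanding, one gets $C_T(\bar u+u)=C_T(\bar u)+\ell(u)+\tfrac12\mathcal Q_T(u)$, where $\ell$ is a continuous linear functional on $\mathcal U_0$ depending on $x_0,x_1$, and $\mathcal Q_T(u)=\int_0^T(u^*u-x_u^*Qx_u)\,dt$ — with $x_u$ the solution of \eqref{eq:lq1} issued from $0$ — is a quadratic form on $\mathcal U_0$ that does \emph{not} depend on $x_0,x_1$. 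Since $u\mapsto x_u$ is a compact Volterra operator on $L^2$, the form $\mathcal Q_T$ is of ``identity minus compact'' type (a Legendre form): it is positive definite if and only if it is coercive, its kernel $\mathcal K_T$ is finite-dimensional, and whenever $\mathcal Q_T\ge 0$ its restriction to $\mathcal K_T^{\perp}$ is coercive.

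Next I would invoke the classical dichotomy for quadratic functionals on a Hilbert space. If $\mathcal Q_T>0$, then $C_T$ is strictly convex and coercive on $\mathcal A$, hence has a unique minimizer. If $\mathcal Q_T(u)<0$ for some $u$, then $C_T(\bar u+su)\to-\infty$ as $s\to\infty$ and no minimizer exists. If $\mathcal Q_T\ge 0$ with $\mathcal K_T\ne\{0\}$, then, writing $u=u'+u''$ with $u'\in\mathcal K_T$ and $u''\in\mathcal K_T^{\perp}$, one has $C_T(\bar u+u)=C_T(\bar u)+\ell(u')+\ell(u'')+\tfrac12\mathcal Q_T(u'')$ with the quadratic part coercive on $\mathcal K_T^{\perp}$; hence a minimizer exists if and only if $\ell|_{\mathcal K_T}=0$, a condition on $x_0,x_1$ that holds for some pairs (trivially for $x_0=x_1=0$) and fails for others. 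Thus the three cases of the proposition follow once the sign of $\mathcal Q_T$ is understood as a function of $T$, with the threshold of positivity occurring exactly at the first conjugate time.

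The remaining ingredients are therefore the identification of the degeneracy times of $\mathcal Q_T$ with the conjugate times, and the monotonicity in $T$. For the identification: $u\in\mathcal K_T\setminus\{0\}$ means $u$ is a nonzero critical point of $\mathcal Q_T$ on $\mathcal U_0$, which by the Lagrange multiplier rule for the endpoint constraint is equivalent to the existence of a costate $p$ with $(p,x_u)$ solving the Hamilton equations \eqref{eq:eqhhh} and $x_u(0)=x_u(T)=0$; the Kalman condition excludes the spurious solution $x_u\equiv 0$, $p\not\equiv 0$, so $\mathcal K_T\ne\{0\}$ precisely when $T$ is a conjugate time. For monotonicity, a direct estimate gives $\|\mathcal Q_T-\mathrm{Id}\|=O(T^2)$, so $\mathcal Q_T>0$ for $T$ small; and if $\mathcal Q_T(u)<0$ (resp.\ $u\in\mathcal K_T$), then extending $u$ by zero on $[T,T']$ for $T'>T$ keeps it admissible — the trajectory, being at the origin at time $T$ with zero control, stays at the origin — with unchanged cost, so negativity (resp.\ non-strict positivity) persists for all $T'\ge T$. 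Hence $t_*:=\inf\{T:\mathcal Q_T\text{ not positive definite}\}$ satisfies $\mathcal Q_T>0$ for $T<t_*$; at $T=t_*$ the form is $\ge0$ (a limit of nonnegative forms) but cannot be coercive, since coercivity of a Legendre form persists under small perturbations and would otherwise survive slightly past $t_*$, contradicting the definition of $t_*$. Therefore $\mathcal K_{t_*}\ne\{0\}$ and, as there is no conjugate time in $(0,t_*)$, we conclude $t_*=t_c$; combined with the dichotomy this yields the three cases.

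The step I expect to be the main obstacle is the bookkeeping in the monotonicity and continuity arguments near $T=t_*$, because the domain $\mathcal U_0=\mathcal U_0^{T}$ of $\mathcal Q_T$ itself varies with $T$ through the endpoint constraint. The clean way around this is to use the isometric extension-by-zero embeddings $\mathcal U_0^{T}\hookrightarrow\mathcal U_0^{T'}$ for $T'>T$ and to phrase positivity, coercivity and the Morse index in terms of the uniform operator topology for the associated Fredholm operators $\mathrm{Id}-\mathcal G_T$, so that semicontinuity of the index and persistence of coercivity reduce to standard facts about compact perturbations of the identity.
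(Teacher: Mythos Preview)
The paper does not prove this proposition; it simply cites \cite[Sec.~16.4]{Agrachevbook}. Your approach --- reduce to the sign of the second variation $\mathcal Q_T$ on the space $\mathcal U_0^T$ of null-endpoint controls, identify its kernel with conjugate Jacobi fields, and track positivity in $T$ --- is precisely the standard one and almost certainly coincides with what is in that reference.

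There is, however, a genuine gap in your treatment of the case $T>t_c$. Your monotonicity argument (extension by zero) shows only that $\mathcal Q_T$ fails to be positive definite for $T>t_c$; it does \emph{not} show that $\mathcal Q_T$ takes a negative value. But your dichotomy requires exactly this to conclude non-existence of minimizers: if $\mathcal Q_T\ge 0$ with nontrivial kernel, you land in the ``depends on $x_0,x_1$'' case, not the ``no minimizer'' case. So as written, for $T>t_c$ you have not excluded the degenerate-nonnegative scenario.

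The fix is short. Take a nonzero $u\in\mathcal K_{t_c}$ and let $\tilde u$ be its extension by zero to $[0,T]$. You already know $\mathcal Q_T(\tilde u)=0$. The point is that $\tilde u$ is \emph{not} in the kernel of the bilinear form $B_T$ associated with $\mathcal Q_T$: if it were, it would correspond to a Hamilton trajectory on all of $[0,T]$ with $x(0)=x(T)=0$, but on $[t_c,T]$ this trajectory has $x\equiv 0$ and hence $Bp\equiv 0$; differentiating and using $\dot p=-A^*p$ gives $B(A^*)^jp(t_c)=0$ for all $j$, which by Kalman forces $p(t_c)=0$, and then $(p,x)(t_c)=0$ forces the whole trajectory to be trivial --- contradicting $u\ne 0$. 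Since $\tilde u\notin\ker B_T$, there is $v\in\mathcal U_0^T$ with $B_T(\tilde u,v)\ne 0$, and then $\mathcal Q_T(\tilde u+s v)=2sB_T(\tilde u,v)+s^2\mathcal Q_T(v)<0$ for suitable small $s$. This yields indefiniteness of $\mathcal Q_T$ for every $T>t_c$, and your dichotomy then gives non-existence. (Alternatively, one can invoke the discreteness of conjugate times stated just before the proposition and argue that $\mathcal Q_T\ge 0$ on an interval past $t_c$ would force every such $T$ to be conjugate; but the direct argument above is cleaner and self-contained.)
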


The minimization of the functional \eqref{eq:lq2} with fixed endpoints and $T>0$ does not define a metric on $\R^{n}$, in general. Nevertheless, one can still define a distortion coefficient as follows. Fix $T=1$ in the LQ problem \eqref{eq:lq1}-\eqref{eq:lq2}. Furthermore, we assume throughout this section that $t_c >1$. This condition ensures existence and uniqueness of minimizers, and the well-posedness of the next definitions. This is not restrictive, since these assumptions will always be satisfied  for the cases we  consider. 

\begin{definition}
For $x_0,x_1 \in \R^n$ and $t\in[0,1]$, define
\[
Z_{t}(x_0,x_1)=\{x_{u}(t)\mid x_{u} : [0,1] \to \R^n  \text{ is the minimizer s.t. } x_u(0) = x_0,\, x_u(1) = x_1\}.
\]
\end{definition}

\begin{definition}\label{d:LQdist}
The distortion coefficient of the LQ problem \eqref{eq:lq1}-\eqref{eq:lq2} is
\begin{equation}
\beta_{t}^{A,B,Q}(x,y)=\limsup_{r\to 0}\frac{|Z_{t}(x,B_{r}(y))|}{|B_{r}(y)|}, \qquad t \in [0,1],
\end{equation}
where $x,y\in \R^n$, $B_r(y)$ denotes the Euclidean ball with center $y$ and radius $r>0$, and $|\cdot|$ denotes the Lebesgue measure of $\R^n$.
\end{definition}
As it will be clear from the proof of the next proposition, in Definition \ref{d:LQdist} one can replace the Euclidean ball with any set nicely shrinking for $r \to 0$.

\begin{prop}\label{p:LQdist}
The distortion coefficient of the LQ problem \eqref{eq:lq1}-\eqref{eq:lq2} does not depend on the choice of $x,y$, and satisfies
\begin{equation}\label{eq:firstrepformula}
\beta_{t}^{A,B,Q}=\frac{\det N(t)}{\det N(1)}>0, \qquad \forall t \in (0,1],
\end{equation}
where $M(t),N(t) :[0,1] \to \mathrm{Mat}(n \times n)$ are the solutions of the Hamiltonian system
\begin{equation}\label{eq:hamiltoniansystem}
\frac{d}{dt} \begin{pmatrix}
M \\
N
\end{pmatrix} = \begin{pmatrix}
-A^* & - Q \\
B &  A
\end{pmatrix} \begin{pmatrix}
M \\
N
\end{pmatrix}, \qquad \begin{pmatrix}
M(0) \\N(0) 
\end{pmatrix} = \begin{pmatrix}
\mathbbold{1} \\ \mathbbold{0}
\end{pmatrix}.
\end{equation}
Equivalently,  we have
\begin{equation}\label{eq:seconrepformula}
\beta_{t}^{A,B,Q}=\exp\left( -\int_{t}^{1}\tr(BV +A) ds\right) >0, \qquad \forall t \in (0,1],
\end{equation}
where $V :(0,1] \to \mathrm{Sym}(n \times n)$ is the solution of the matrix Riccati equation
\begin{equation}\label{eq:RiccatiLQ}
\dot{V} + A^*V + VA + VBV + Q = \mathbbold{0}, \qquad \lim_{t\to 0^+} V^{-1}(t) = \mathbbold{0}.
\end{equation}
\end{prop}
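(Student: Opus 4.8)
\textbf{Proof plan for Proposition \ref{p:LQdist}.}

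The plan is to reduce the computation of the distortion coefficient to a Jacobian determinant of an explicit linear map, and then to identify that Jacobian with $\det N(t)/\det N(1)$. First I would exploit the fact that, by Proposition \ref{p:LQconj} and the standing assumption $t_c>1$, for each pair of endpoints $x_0,x_1$ there is a \emph{unique} minimizer, which is the projection of a solution of the Hamilton equations \eqref{eq:eqhhh}. Since the Hamiltonian \eqref{eq:Hamiltonian} is quadratic, the Hamiltonian flow is \emph{linear}: writing the flow in the coordinates $(p,x)$, the solution with initial covector $p(0)=p_0$ and initial point $x(0)=x_0$ is $x(t) = N(t) p_0 + M(t)^* x_0$ — wait, more precisely one should set up the fundamental matrix of \eqref{eq:eqhhh} and extract the block $N(t)$ that maps the initial covector to the current base point when $x_0$ is fixed. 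Concretely, fixing $x_0 = x$, the map $p_0 \mapsto x_u(t)$ is affine with linear part $N(t)$ (this is exactly the content of the system \eqref{eq:hamiltoniansystem}, with the block structure chosen so that $N(0)=\mathbbold 0$, $M(0)=\mathbbold 1$, and $x(t)$ depends on $p_0$ through $N(t)$ alone when the base point is fixed). Invertibility of $N(t)$ for $t\in(0,1]$ follows from the conjugate-time condition: $\det N(t)=0$ would produce a nontrivial Jacobi field vanishing at $0$ and $t$, i.e. a conjugate time $\le 1$.

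Next I would compute the distortion coefficient directly from the definition. Fix $x,y\in\R^n$. For $z$ near $y$, the unique minimizer from $x$ to $z$ corresponds to a unique initial covector $p_0 = p_0(z)$, depending smoothly (indeed affinely) on $z$ via $z = N(1)p_0 + (\text{const})$, so $p_0(z) = N(1)^{-1}(z - c)$ for a constant vector $c$. The $t$-intermediate point is then $x_u(t) = N(t) p_0(z) + (\text{affine in } x) = N(t)N(1)^{-1} z + (\text{const})$. Hence the map $z \mapsto x_u(t)$ sending $B_r(y)$ to $Z_t(x,B_r(y))$ is affine with linear part $N(t)N(1)^{-1}$, so
\begin{equation}
\frac{|Z_t(x,B_r(y))|}{|B_r(y)|} = |\det(N(t)N(1)^{-1})| = \frac{|\det N(t)|}{|\det N(1)|},
\end{equation}
independently of $r$ (so the $\limsup$ is trivial and the Euclidean ball may be replaced by any nicely shrinking family), and independently of $x,y$. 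Positivity of the ratio, and the fact that one can drop the absolute values, follows by a continuity argument: $\det N(t)$ is continuous in $t$, nonzero on $(0,1]$, and positive for small $t$ since $N(t)\approx t\,B + O(t^2)$ on the appropriate subspace — more carefully, one checks the sign near $t=0^+$ using the asymptotics of the Hamiltonian flow, or simply notes that $Z_t$ being a genuine set of points forces the Jacobian to be positive. This gives \eqref{eq:firstrepformula}.

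For the equivalent Riccati representation \eqref{eq:seconrepformula}, I would introduce $V(t) := M(t)N(t)^{-1}$ on $(0,1]$, which is well-defined since $N(t)$ is invertible there, and symmetric (a standard fact for Hamiltonian systems of this type, checked by differentiating $M^*N - N^*M$ and using the symmetry of $Q$ and $B$ together with the initial condition). Differentiating the relation $M = V N$ and substituting \eqref{eq:hamiltoniansystem} yields the Riccati equation $\dot V + A^*V + VA + VBV + Q = \mathbbold 0$; the limit initial condition $\lim_{t\to 0^+}V^{-1}(t) = N(t)M(t)^{-1}\big|_{t\to 0} = \mathbbold 0$ is immediate from $N(0)=\mathbbold 0$, $M(0)=\mathbbold 1$. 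Finally, from $\dot N = BM + AN = (BV + A)N$ (using \eqref{eq:hamiltoniansystem} and $M = VN$) one gets Liouville's formula $\frac{d}{dt}\log\det N(t) = \tr(BV(t) + A)$, and integrating from $t$ to $1$ converts \eqref{eq:firstrepformula} into \eqref{eq:seconrepformula}.

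The main obstacle I anticipate is purely bookkeeping rather than conceptual: pinning down the precise block structure of the fundamental matrix of \eqref{eq:eqhhh} so that the claimed system \eqref{eq:hamiltoniansystem} (with its specific sign pattern and initial data $(M(0),N(0)) = (\mathbbold 1, \mathbbold 0)$) really does encode ``initial covector $\mapsto$ current base point with the base point held fixed'', and then correctly tracking the affine (non-linear) parts in $x$ and $c$ so they genuinely drop out of the Jacobian. One must also be slightly careful that $N(t)$ here is the $2n\times n$ stacked matrix $(M;N)$ solving a first-order system, not a square block of the flow — the determinant in \eqref{eq:firstrepformula} is of the $n\times n$ matrix $N(t)$, and one needs the conjugate-time characterization to guarantee it never vanishes on $(0,1]$. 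Once that linear-algebra dictionary is fixed, every remaining step is a one-line computation.
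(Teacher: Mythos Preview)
Your proposal is correct and follows essentially the same route as the paper: exploit the linearity of the Hamiltonian flow to identify the map $p_0\mapsto x(t)$ (with base point fixed) as affine with linear part $N(t)$, use the hypothesis $t_c>1$ to guarantee $N(t)$ is invertible on $(0,1]$, read off the distortion coefficient as the Jacobian ratio $\det N(t)/\det N(1)$, and then set $V=MN^{-1}$ and apply Liouville's formula $\tfrac{d}{dt}\log\det N=\tr(\dot N N^{-1})=\tr(BV+A)$ to obtain the Riccati representation. The paper phrases the first half via the exponential-type map $E^t_x:p\mapsto x(t)$ and its differential, whereas you compose directly to get the affine map $z\mapsto N(t)N(1)^{-1}z+\text{const}$; these are the same computation.
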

Notice that, under our assumptions, the Cauchy problem with limit initial datum \eqref{eq:RiccatiLQ} is well posed, and its solution is well-defined on $(0,1]$ (see Appendix~\ref{s:riccati}).
\begin{proof}
Fix $x \in \R^n$. Consider the map $E^t_x :\R^n \to \R^n$ that maps $p$ to the point $x(t)$ of the solution $(p(t),x(t))$ of the Hamilton equations with initial conditions $(p,x)$. We claim that $E^1_x$ is a smooth diffeomorphism and that $[0,1] \ni t\mapsto E^t_x(p)$ is the unique solution of the LQ problem \eqref{eq:lq1}-\eqref{eq:lq2} joining its endpoints.

Indeed, since  $t_c>1$ and by Proposition \ref{p:LQconj}, $E^1_x$ is surjective. Suppose that $E^1_x(p) = E^1_x(p')$. Let $(p(t),x(t))$ and $(p'(t),x'(t))$ the corresponding solutions of the Hamilton equations with initial conditions $(p,x)$ and $(p',x)$, respectively. By linearity of Hamilton equations for LQ problems, the difference $(p''(t),x''(t)):=(p'(t)-p(t),x'(t)-x(t))$ is still a solution, and $x''(0) = x''(1) = 0$. Since $t_c >1$, such a solution must be trivial, and in particular $p=p'$.

Suppose now that $p$ is a critical point for $E^1_x$, in particular there exists $\dot{p}\neq 0 $ in $\R^n$ such that $E^1_x(p+ \varepsilon \dot{p}) = E^1_x(p) + o(\varepsilon)$. By linearity of Hamilton equations defining $E^1_x$, we obtain that $E^1_0(\dot{p}) = 0$. Since $1$ cannot be a conjugate time, $\dot{p} = 0$, and $E^1_x$ is a submersion (the same argument shows that $E^t_0$ is a submersion for all $t<t_c$).

This concludes the proof of the claim. In particular, for all $x,y \in \R^n$ with $y \neq x$, and $y = E_x^1(p)$, we have $Z_t(x,y) = E_x^t(p)$. It follows directly from the definition that
\begin{align}
\beta_t^{A,B,Q}(x,y) = \frac{\det D E^t_x(p)}{\det D E^1_x(p)}, \qquad t \in [0,1],
\end{align}
where $y = E_x^1(p)$, and $D$ denotes the differential. By definition of $E^t_x$, and the linearity of the LQ Hamilton's equation, we immediately see that the linear map $DE^t_x(p)$, which we identify with a matrix $N(t)$ in coordinates, is the solution of
\begin{equation}
\frac{d}{dt} \begin{pmatrix}
M \\
N
\end{pmatrix} = \begin{pmatrix}
-A^* & - Q \\
B &  A
\end{pmatrix} \begin{pmatrix}
M \\
N
\end{pmatrix}, \qquad \begin{pmatrix}
M(0) \\N(0) 
\end{pmatrix} = \begin{pmatrix}
\mathbbold{1} \\ \mathbbold{0}
\end{pmatrix},
\end{equation}
thus proving the first representation formula \eqref{eq:firstrepformula}. This also proves that the distortion coefficient $\beta_t^{A,B,Q}(x,y)$ does not depend on $x$ and $y$.

Notice that $\beta_t^{A,B,Q} >0$ for all $t \in (0,1]$ (otherwise this would imply the existence of a conjugate time at $t<1$), hence  $\det N(t) > 0$ for $t \in (0,1]$. 

To prove the second representation formula \eqref{eq:seconrepformula}, for all $t>0$ we have
\begin{align}
\beta_t^{A,B,Q} & = \exp\left( -\int_t^1 \frac{d}{ds} \log \det N ds \right) 
\\
& 
=\exp\left( -\int_t^1 \tr (\dot{N} N^{-1}) ds \right)
\\
& 
=\exp\left( -\int_t^1 \tr (BV + A) ds \right),
\end{align}
where, in the last passage, we defined $V(t)=M(t)N(t)^{-1}$ for all $t \in (0,1]$, and we used the Hamiltonian system. A straightforward computation shows that $V$ satisfies the Riccati equation \eqref{eq:RiccatiLQ}. Furthermore, since $N(0)=\mathbbold{0}$ and $M(0) =\mathbbold{1}$, we have
\begin{equation}
\lim_{t \to 0^+} V^{-1}(t) = \lim_{t \to 0^+} N(t) M(t)^{-1} = \mathbbold{0},
\end{equation}
concluding the proof.
\end{proof}
We will need the following homogeneity property.
\begin{lemma} \label{l:betaomo}
For every $\eps>0$, it holds $\beta_{t}^{A,\eps B,Q}=\beta_{t}^{A, B,\eps Q}$.
\end{lemma}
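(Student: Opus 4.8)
The plan is to use the second representation formula \eqref{eq:seconrepformula} from Proposition \ref{p:LQdist} and track how the Riccati solution transforms under the two rescalings. Specifically, for the LQ problem with data $(A,\eps B, Q)$, let $V_1$ solve $\dot V_1 + A^* V_1 + V_1 A + \eps V_1 B V_1 + Q = \mathbbold{0}$ with $\lim_{t\to 0^+} V_1^{-1}(t) = \mathbbold{0}$; for the data $(A, B, \eps Q)$, let $V_2$ solve $\dot V_2 + A^* V_2 + V_2 A + V_2 B V_2 + \eps Q = \mathbbold{0}$ with the same limit initial condition. The natural guess is the relation $V_2 = \eps V_1$: substituting $V = \eps V_1$ into the second Riccati equation gives $\eps \dot V_1 + \eps A^* V_1 + \eps V_1 A + \eps^2 V_1 B V_1 + \eps Q$, and dividing by $\eps$ recovers exactly the first Riccati equation. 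Since $\lim_{t\to 0^+}(\eps V_1)^{-1}(t) = \eps^{-1}\lim_{t\to 0^+} V_1^{-1}(t) = \mathbbold{0}$, the scaled function $\eps V_1$ satisfies the Cauchy problem defining $V_2$; by the well-posedness of the Riccati Cauchy problem with limit initial datum (Appendix \ref{s:riccati}), uniqueness gives $V_2 = \eps V_1$ on $(0,1]$.

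With this identity in hand, the rest is a direct computation with the trace formula. For $(A,\eps B, Q)$ we get
\begin{equation}
\beta_t^{A,\eps B, Q} = \exp\left(-\int_t^1 \tr(\eps B V_1 + A)\, ds\right),
\end{equation}
while for $(A,B,\eps Q)$ we get
\begin{equation}
\beta_t^{A, B, \eps Q} = \exp\left(-\int_t^1 \tr(B V_2 + A)\, ds\right) = \exp\left(-\int_t^1 \tr(\eps B V_1 + A)\, ds\right),
\end{equation}
using $V_2 = \eps V_1$ and linearity of the trace. The two right-hand sides coincide, which is the claim.

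I would double-check the edge case $\eps = 0$ is excluded (the statement says $\eps > 0$, so the rescaling is invertible and no degeneracy of the Riccati problem occurs), and note that the Kalman/controllability condition and the bound $t_c > 1$ needed for Proposition \ref{p:LQdist} to apply are preserved — indeed $(A, \eps B)$ and $(A, B)$ satisfy the same Kalman condition since $\rank(\eps B, A \eps B, \ldots) = \rank(B, AB, \ldots)$, and the conjugate-time structure of $(A,\eps B, Q)$ and $(A, B, \eps Q)$ agrees because their Hamiltonian systems are conjugate (the map $(p,x)\mapsto(\eps^{-1} p, x)$, or a similar scaling, intertwines them). Alternatively, one could run the same argument at the level of the first representation formula \eqref{eq:firstrepformula} by exhibiting an explicit linear change of variables in the $(M,N)$ system, but the Riccati route is cleaner and avoids bookkeeping with two $n\times n$ blocks.

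The main obstacle, such as it is, is purely a matter of being careful about the limit initial condition: one must invoke the uniqueness statement for the Riccati Cauchy problem with datum $\lim_{t\to 0^+} V^{-1}(t) = \mathbbold{0}$ rather than an ordinary initial value problem, since $V$ itself blows up as $t\to 0^+$. Granting that uniqueness (which is exactly what Appendix \ref{s:riccati} and the remark after Proposition \ref{p:LQdist} provide), the proof is short.
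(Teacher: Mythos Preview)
Your proof is correct. You use the second representation formula \eqref{eq:seconrepformula} and the scaling $V_2 = \eps V_1$ of the Riccati solution, invoking uniqueness of the Cauchy problem with limit initial datum. The paper instead uses the first representation formula \eqref{eq:firstrepformula}: if $(M,N)$ solves the linear Hamiltonian system for $(A,B,\eps Q)$ with initial data $(\mathbbold{1},\mathbbold{0})$, then $(M,\eps N)$ solves the system for $(A,\eps B,Q)$ with the same initial data, and the factor $\eps^n$ cancels in the ratio $\det N(t)/\det N(1)$. Ironically, the alternative you dismiss as ``bookkeeping with two $n\times n$ blocks'' is exactly what the paper does, and it is the shorter route: it is a two-line linear change of variables and sidesteps the need to appeal to the well-posedness of the singular Riccati problem. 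Your approach buys nothing extra here, but it is perfectly valid and the two arguments are essentially dual to one another via $V = M N^{-1}$.
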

\begin{proof}
It is enough to check that if $(M(t),N(t))$ is a solution of
\begin{equation}
\frac{d}{dt} \begin{pmatrix}
M \\
N
\end{pmatrix} = \begin{pmatrix}
-A^* & - \eps Q \\
B &  A
\end{pmatrix} \begin{pmatrix}
M \\
N
\end{pmatrix}, \qquad \begin{pmatrix}
M(0) \\N(0) 
\end{pmatrix} = \begin{pmatrix}
\mathbbold{1} \\ \mathbbold{0}
\end{pmatrix},
\end{equation}
then the pair $(\overline M(t),\overline N(t)):=(M(t),\eps N(t))$ satisfies
\begin{equation}
\frac{d}{dt} \begin{pmatrix}
M \\
N
\end{pmatrix} = \begin{pmatrix}
-A^* & - Q \\
\eps B &  A
\end{pmatrix} \begin{pmatrix}
M \\
N
\end{pmatrix}, \qquad \begin{pmatrix}
M(0) \\N(0) 
\end{pmatrix} = \begin{pmatrix}
\mathbbold{1} \\ \mathbbold{0}
\end{pmatrix}.\qedhere
\end{equation}
\end{proof}

\section{Constant curvature models} \label{s:somemodels}
Let $\y$ be the Young diagram associated with an ample, equiregular geodesic, and let $\Gamma_1 = \Gamma_1(\y)$, $\Gamma_2 = \Gamma_2(\y)$ be the $n\times n$ matrices defined in Section~\ref{s:matrices}. Let $Q$ be a symmetric $n\times n$ matrix.

\begin{definition}
We denote by $\mathrm{LQ}(\y;Q)$ the \emph{constant curvature model}, associated with a Young diagram $\y$ and constant curvature equal to $Q$, defined by the LQ problem with Hamiltonian
\begin{equation}
H(p,x) = \frac{1}{2}\left(p^* B p + 2p^*Ax + x^*Qx\right), \qquad A = \Gamma_1^*(\y), \quad B = \Gamma_2(\y).
\end{equation}
We denote by $\beta_t^{\y,Q}=\beta_t^{A,B,Q}$ its distortion coefficient.
\end{definition}

In the rest of this section we use Proposition \ref{p:LQdist} to provide several examples of distortion coefficients. We start by recovering,  within our framework, the usual Riemannian ones.

\subsection{The Riemannian case}\label{ex:1}

Let $\y$ be a Young diagram with a single column of length $n$ (which is the case for a Riemannian geodesic). We have $A=\mathbbold{0}_n$, $B=\mathbbold{1}_n$. Let also $Q = \k \mathbbold{1}_n$(we drop the subscript since the dimension is fixed). In this case the Hamiltonian of the corresponding LQ problem is
\begin{equation}
H(p,x) = \frac{1}{2} \left(|p|^2 +\k |x|^2\right),
\end{equation}
which is the Hamiltonian of a harmonic oscillator (for $\k >0$), a free particle (for $\k =0$) or a harmonic repulsor (for $\k <0$). The system
\begin{equation}
\frac{d}{dt} \begin{pmatrix}
M \\
N
\end{pmatrix} = \begin{pmatrix}
\mathbbold{0} & - \k \mathbbold{1} \\
\mathbbold{1} &  \mathbbold{0}
\end{pmatrix} \begin{pmatrix}
M \\
N
\end{pmatrix}, \qquad \begin{pmatrix}
M(0) \\N(0) 
\end{pmatrix} = \begin{pmatrix}
\mathbbold{1} \\ \mathbbold{0}
\end{pmatrix},
\end{equation}
is equivalent to the second order equation $\ddot{N} + \k N = \mathbbold{0}$, with $N(0)=\mathbbold{0}$ and $\dot N(0)=\mathbbold{1}$. We get in this case
\begin{equation}
\beta_{t}^{\y,Q}=\frac{\det N(t)}{\det N(1)}=
\begin{cases}
\left(\frac{\sin (\alpha t)}{\sin (\alpha)} \right)^{n} &\k>0,\\
t^{n}& \k=0,\\
\left(\frac{\sinh (\alpha t)}{\sinh(\alpha)} \right)^{n} &\k<0,
\end{cases}\qquad \alpha:=\sqrt{|\k|}.
\end{equation}
We will adopt a unified notation for the coefficient by writing
\begin{equation}
\beta_{t}^{\y,Q}=
\left(\frac{\sin (\sqrt{\k} t)}{\sin (\sqrt{\k})} \right)^{n}, \qquad \k \in \R,
\end{equation}
where we regard the above as an analytic function of $\k$, choosing the principal branch of the square root on the complex plane.

If we choose, instead, $Q=\mathrm{diag}\{\k,\ldots,\k,0\}$ we get
\begin{equation}
\beta_{t}^{\y,Q}=
t\left(\frac{\sin (\sqrt{\k} t)}{\sin (\sqrt{\k})} \right)^{n-1}, \qquad \k \in \R,
\end{equation}
which is the sharp Riemannian model coefficient of Theorem \ref{t:comp1}.

\subsection{The two-columns case} \label{ex:2}

Let $\y$ be a Young diagram with a single row of length $2$, and let $Q = \diag\{\k_1,\k_2\}$, with $\k_1,\k_2 \in \R$. In this case
\begin{equation}\label{eq:ABQ2}
A = \begin{pmatrix}
0 & 0 \\
1 & 0
\end{pmatrix}, \qquad B= \begin{pmatrix}
1 & 0 \\
0 & 0
\end{pmatrix}, \qquad Q= \begin{pmatrix}
\k_1 & 0 \\
0 & \k_2
\end{pmatrix}.
\end{equation}
The Hamiltonian of $\mathrm{LQ}(\y;Q)$ is
\begin{equation}
H(p,x) = \frac{1}{2} \left(p_1^2 +2p_{2}x_{1}+\k_{1} x_{1}^2+\k_{2}x_{2}^{2}\right).
\end{equation}
We can compute through Proposition \ref{p:LQdist} the distortion coefficient. By reduction to Jordan normal form of the corresponding Hamiltonian system \eqref{eq:hamiltoniansystem} (see details in \cite[Prop.\ 28]{RS17}), one obtains 
\begin{equation}
\det N(t)=\frac{\theta _-^2 \sin ^2\left(\theta _+ t\right)-\theta _+^2 \sin ^2\left(\theta _- t\right)}{4 \theta _-^2 \theta _+^2(\theta _-^2- \theta _+^2)},
\end{equation} 
where, choosing the principal branch of the square root, we set
\begin{equation}\label{eq:set}
\theta_{\pm}= \frac{1}{2}(\sqrt{x+y} \pm \sqrt{x-y}),\qquad \text{with} \qquad x = \frac{\k_1}{2}, \quad y = \frac{\sqrt{4\k_2 + \k_1^2}}{2}.
\end{equation}
Thus the distortion coefficient is
\begin{equation}\label{eq:stima}
\beta_{t}^{\y,Q}=\frac{\theta _-^2 \sin ^2\left(\theta _+ t\right)-\theta _+^2 \sin ^2\left(\theta _- t\right)}{\theta _-^2 \sin ^2\left(\theta _+ \right)-\theta _+^2 \sin ^2\left(\theta _- \right)}.
\end{equation}
Notice that $\beta_t^{\y,Q}$ is understood as a real-analytic function of $\theta_\pm \in \mathbb{C}$.

\subsubsection{The case $\k_2=0$}\label{s:subcase1}
A particular two-columns case is obtained for $\k_2=0$ (e.g., it occurs in the Heisenberg group and, more in general, in Sasakian contact structures with bounded Tanaka-Webster curvature, cf. Section~\ref{s:applications}). Depending on the sign of $\k_1$, we set $\theta = \theta_+ = \pm \theta_-$. Then, the distortion coefficient \eqref{eq:stima} reduces to
\begin{equation}\label{eq:stima76}
\beta_{t}^{\y,Q}=\frac{\sin(t\theta)}{\sin (\theta)} \frac{t\theta \cos(t\theta)-\sin(t\theta)}{\theta \cos(\theta)-\sin(\theta)}, \qquad \theta = \frac{\sqrt{\k_1}}{2},
\end{equation}
where the right hand side is understood as a real-analytic function of $\theta \in \mathbb{C}$. For instance, if $\kappa_{1}=0$, then \eqref{eq:stima76}  reads $\beta_{t}^{\y,Q}=t^{4}$.

\subsubsection{The case $4\k_2 + \k_{1}^{2}=0$}\label{s:subcase2}
Another relevant case is  obtained when $4\k_2 + \k_{1}^{2}=0$. It will be important for the proof of Theorem \ref{t:mcp}. In this case $\theta_{-}=0$ in \eqref{eq:set}. One gets (replacing $\theta_{+}$ by $\theta$ in the above notation)
\begin{equation}\label{eq:stima1}
\beta_{t}^{\y,Q}=\frac{ \sin ^2\left(t\theta \right)-(t\theta)^2}{\sin ^2\left(\theta \right)-\theta^2}, \qquad  \theta = \frac{\sqrt{\k_1}}{2},
\end{equation}
where the right hand side is understood as a real-analytic function of $\theta \in \mathbb{C}$. 

\begin{lemma}\label{l:subcase2}
Let $\beta_t^{\y,Q}$ as in \eqref{eq:stima1}, and assume that $\k_1 \geq 0$. Then $\beta_t^{\y,Q}/t^4$ is a non-increasing function of $t\in (0,1]$. In particular
\begin{equation}\label{eq:stima2}
\beta_{t}^{\y,Q}\geq t^{4}, \qquad \forall t \in [0,1].
\end{equation}
The exponent $4$ is optimal, i.e.\ it cannot be replaced to a smaller one.
\end{lemma}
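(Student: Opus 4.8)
The plan is to analyze the explicit function
\[
\beta_t^{\y,Q} = \frac{\sin^2(t\theta) - (t\theta)^2}{\sin^2(\theta) - \theta^2}, \qquad \theta = \tfrac{1}{2}\sqrt{\k_1},
\]
under the hypothesis $\k_1 \geq 0$, which means $\theta$ is real (and we may assume $\theta > 0$, the case $\theta = 0$ being the trivial identity $\beta_t^{\y,Q} = t^4$ obtained by taking the analytic limit). The key reduction is that monotonicity of $t \mapsto \beta_t^{\y,Q}/t^4$ on $(0,1]$ is equivalent, after dividing numerator and denominator by the ($t$-independent, negative) constant $\sin^2\theta - \theta^2$, to monotonicity of the single-variable function
\[
g(s) = \frac{s^2 - \sin^2 s}{s^4}, \qquad s = t\theta \in (0,\theta],
\]
being \emph{non-decreasing} in $s$ (the sign flip coming from $\sin^2\theta - \theta^2 < 0$ for $\theta \ne 0$). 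So the whole lemma boils down to: show $g$ is non-decreasing on $(0,\infty)$, or at least on $(0,\pi)$ which suffices since the conjugate time bound forces $\theta \le$ something less than $\pi$ — but it is cleaner and true to prove it on all of $(0,\infty)$.

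First I would compute $g'(s)$. Writing $g(s) = s^{-2} - s^{-4}\sin^2 s$ and differentiating, one gets after clearing the positive factor $s^{-5}$ a trigonometric expression; using $\sin^2 s = \tfrac{1}{2}(1 - \cos 2s)$ and $2\sin s\cos s = \sin 2s$ it is convenient to substitute $w = 2s$ and reduce to showing that a function of the form
\[
h(w) = a w + b \sin w + c w \cos w \ge 0 \quad (\text{or } \le 0) \qquad \text{for } w > 0,
\]
with explicit small-integer/rational coefficients $a,b,c$, has the right sign. Then I would verify the sign of $h$ by the standard bootstrap: check $h(0) = 0$, differentiate repeatedly until a derivative has an obvious sign (here $h$, $h'$, $h''$ will vanish at $0$ and some higher derivative will be a simple multiple of $w\sin w$ or $\sin w - w$-type term with fixed sign), and integrate back up using that each antiderivative starts at $0$. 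This is the routine-calculation part, and I would not grind through it in the plan, but I expect it to come out to $g' \ge 0$ with equality only at isolated points, giving the claimed non-increasing behaviour of $\beta_t^{\y,Q}/t^4$; evaluating at $t=1$ then gives $\beta_t^{\y,Q} \ge \beta_1^{\y,Q}\cdot t^4 = t^4$ since $\beta_1^{\y,Q} = 1$.

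For optimality of the exponent $4$, I would argue that near $t = 0$ we have the Taylor expansion $\sin^2(t\theta) - (t\theta)^2 = -\tfrac{1}{3}(t\theta)^4 + O(t^6)$, hence $\beta_t^{\y,Q} = \frac{-\frac13\theta^4 t^4 + O(t^6)}{\sin^2\theta - \theta^2} \sim C t^4$ as $t\to 0^+$ with a positive constant $C$ (positive because numerator and denominator leading terms are both negative). Therefore if $\beta_t^{\y,Q} \ge t^{N}$ held for all $t \in [0,1]$ with $N < 4$, dividing by $t^N$ and letting $t \to 0^+$ would give $0 = \lim C t^{4-N} \ge 1$, a contradiction; so $4$ cannot be lowered. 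The only mild subtlety — and the main obstacle — is handling the degenerate limit $\theta \to 0$ and keeping track of signs of $\sin^2\theta - \theta^2$ consistently throughout (it is strictly negative for $\theta \in (0,\pi]$, vanishing only at $\theta = 0$ where the analytic-continuation convention takes over); once that bookkeeping is fixed, the argument is elementary single-variable calculus.
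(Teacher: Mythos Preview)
Your plan is essentially the paper's approach—reduce to a single-variable inequality and verify it by elementary calculus—but there are two issues. First, a sign slip: since
\[
\frac{\beta_t^{\y,Q}}{t^4} \;=\; \frac{\theta^4}{\theta^2 - \sin^2\theta}\cdot g(t\theta), \qquad g(s)=\frac{s^2-\sin^2 s}{s^4},
\]
and the prefactor is \emph{positive} (there are two sign changes, not one: one from the negative constant $\sin^2\theta-\theta^2$ and one from your flip of the numerator), non-increasing in $t$ is equivalent to $g$ being \emph{non-increasing} in $s$, i.e.\ $g'\le 0$. Carrying out the differentiation and setting $w=2s$ leads to the target inequality $f(w):=w^2+w\sin w+4\cos w-4\ge 0$, which has a $w^2$ term and is not of the shape $aw+b\sin w+cw\cos w$ you anticipated.

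Second, and more substantively, the bootstrap via successive derivatives does not close on all of $(0,\infty)$. One has $f(0)=f'(0)=f''(0)=0$ and $f'''(w)=\sin w-w\cos w$, but this changes sign (first around $w\approx 4.49$), so the bootstrap only yields $f>0$ on an initial interval; the paper uses $(0,5\pi/4]$. A separate argument is required for large $w$: the paper completes the square,
\[
f(w)=\Big(w+\tfrac{1}{2}\sin w\Big)^2+\Big(4+\tfrac{1}{2}\cos w\Big)^2-\tfrac{81}{4},
\]
and bounds each bracket crudely for $w>5\pi/4$. This regime genuinely occurs, because the model with $4\k_2+\k_1^2=0$ has no conjugate times (so there is no a priori bound on $\theta$, contrary to your parenthetical hope), and $\k_1\ge 0$ is otherwise unrestricted in the lemma. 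Your optimality argument via the $t^4$ asymptotics is fine and matches the paper.
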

\begin{proof}
One can check that $\beta_t^{\y,Q} \sim t^4$ as $t\to 0$, giving the optimality part of the statement. It is sufficient to prove that, for all $t\in(0,1]$, it holds
\begin{equation}\label{eq:stima3}
t\frac{d}{dt}\log \beta_{t}^{\y,Q} \leq 4.
\end{equation}
After some manipulations, the left hand side of \eqref{eq:stima3} is rewritten as 
\begin{equation}
t\frac{d}{dt}\log \beta_{t}^{\y,Q} = \frac{2z(z-\sin z)}{z^2+2\cos z-2},
\end{equation}
where we have set $z=2t\theta$. We then show that 
for every real $z\ge 0$ we have
	\begin{equation}\label{eq:ineq-theta00}
	g(z):=\frac{2z(z-\sin z)}{z^2+2\cos z-2}\le 4,
	\end{equation}
	and that the equality holds if and only if $z=0$. Notice that $\lim_{z\rightarrow 0}g(z)=4$ and $z^2+2\cos z-2>0$ for $z>0$. To prove (\ref{eq:ineq-theta00}) it is enough to show that
	\begin{equation}
	\frac{2z(z-\sin z)}{z^2+2\cos z-2}< 4, \quad \text{for } z>0,
	\end{equation}
	or equivalently $f(z):=z^2+z\sin z+4\cos z-4>0$, for $z>0$.
	We have that $f(0)=f'(0)=f''(0)=0$ and $f'''(z)=\sin z -z\cos z>0$ if $0<z\le\tfrac{5\pi}{4}$. Hence $f(z)>0$ if $0<z\le\tfrac{5\pi}{4}$.
	Now assume $z>\tfrac{5\pi}{4}$ and observe that
	\begin{equation}
	f(z) = \left(z+\frac{\sin z}{2}\right)^2 + \left(4+\frac{\cos z}{2}\right)^2- \frac{81}{4} > \left(\frac{5\pi}{4}-\frac{1}{2}\right)^2 + \left(4-\frac{1}{2}\right)^2- \frac{81}{4}>0.	
	\end{equation}
	The proof is concluded.
\end{proof}

\section{Comparison of the distortion coefficient}\label{s:proofs}

The following result on the computation of the distortion coefficient $\beta_{t}(x,y)$ on a sub-Riemannian manifold is crucial. An equivalent statement is \cite[Lemma~44]{BR-G1}.

\begin{lemma}\label{l:distortioncomputed}
Let $x,y \in M$, with $y \notin \cut(x)$, and assume that the geodesic $\gamma:[0,1]\to  M$ joining $x$ with $y$ is ample and equiregular, with Young diagram $\y$. Let $X_1(t),\dots,X_n(t)$ be a canonical moving frame along $\gamma$  (cf.\ Appendix \ref{s:prel}). Then
\begin{equation}
\frac{d}{dt}\log\beta_t(x,y)= \tr(B V(t) +A) + \rho_{\mis,\g}(t), \qquad \forall t \in (0,1],
\end{equation}
where $V : (0,1]\to \mathrm{Sym}(n\times n)$ is the solution of the Riccati equation
\begin{equation}\label{eq:riccatiproof}
\dot{V} + A^*V + VA + VBV + R(t) = \mathbbold{0}, \qquad \lim_{t\to 0^+}V(t)^{-1} = \mathbbold{0}.
\end{equation}
Here $R_{ij}(t) = \mathfrak{R}_{\gamma}(t)(X_i(t),X_j(t))$, and $A= \Gamma_1^*(\y)$, $B=\Gamma_2(\y)$ are the normal form matrices defined in Appendix \ref{s:prel}.
\end{lemma}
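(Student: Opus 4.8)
The plan is to reduce the computation of $\beta_t(x,y)$ on $M$ to the computation of the distortion coefficient of an LQ model of the type studied in Section~\ref{s:LQ}, via the canonical moving frame. First I would recall, from Appendix~\ref{s:prel}, that along the ample equiregular geodesic $\gamma$ the canonical moving frame $X_1(t),\dots,X_n(t)$ trivializes the Jacobi equation for the flow of the geodesic Hamiltonian: writing a Jacobi field $J(t)$ along $\gamma$ in these coordinates as a pair $(p(t),x(t))\in\R^{2n}$ (with $p$ the ``vertical'' and $x$ the ``horizontal'' components with respect to the frame), the Jacobi equation becomes the linear ODE
\begin{equation}
\frac{d}{dt}\begin{pmatrix} p \\ x\end{pmatrix} = \begin{pmatrix} -A^* & -R(t) \\ B & A \end{pmatrix}\begin{pmatrix} p \\ x \end{pmatrix},
\end{equation}
where $A=\Gamma_1^*(\y)$, $B=\Gamma_2(\y)$ are the normal-form matrices determined by the Young diagram $\y$, and $R_{ij}(t)=\Rcan_\gamma(t)(X_i(t),X_j(t))$. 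This is exactly the structure of the Hamiltonian system \eqref{eq:hamiltoniansystem} of an LQ problem with time-dependent potential $Q(t)=R(t)$; the only difference from Section~\ref{s:LQ} is that $R(t)$ is not constant, but the linear-algebraic manipulations in the proof of Proposition~\ref{p:LQdist} (differentiating $\log\det$, introducing $V=MN^{-1}$, deriving the Riccati equation) do not use constancy of the potential.

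\textbf{Key steps.} (i) Identify the exponential map. As in Proposition~\ref{p:LQdist}, fix $x$ and let $E^t_x$ be the map sending the initial covector $p\in T^*_xM$ to $\gamma_p(t)$, the time-$t$ point of the geodesic with that initial covector. Since $y\notin\cut(x)$, the geodesic joining $x$ to $y$ is unique and $E^1_x$ is a local diffeomorphism near the relevant covector, so $Z_t(x,\mc B_r(y))=E^t_x((E^1_x)^{-1}(\mc B_r(y)))$ for small $r$, and therefore
\begin{equation}
\beta_t(x,y)=\limsup_{r\to 0}\frac{\mis(Z_t(x,\mc B_r(y)))}{\mis(\mc B_r(y))}=\frac{|\det D E^t_x(p)|}{|\det D E^1_x(p)|}\cdot\frac{\mis_{\gamma(t)}(\text{frame})}{\mis_{\gamma(1)}(\text{frame})},
\end{equation}
the extra ratio of measure densities coming from expressing $\mis$ in the (non-volume-preserving) canonical frame rather than in a fixed background coordinate system. (ii) Identify $DE^t_x$ with the horizontal block $N(t)$ of the Jacobi matrix solution $(M(t),N(t))$ of the linear system above with initial data $M(0)=\mathbbold 1$, $N(0)=\mathbbold 0$ — this is precisely the statement that Jacobi fields vanishing at $t=0$ are the variations of geodesics with fixed initial point, read through the canonical frame. (iii) Differentiate:
\begin{equation}
\frac{d}{dt}\log\beta_t(x,y)=\frac{d}{dt}\log\det N(t)+\frac{d}{dt}\log\mis_{\gamma(t)}(X_1(t),\dots,X_n(t))=\tr(\dot N N^{-1})+\rho_{\mis,\g}(t),
\end{equation}
using Definition~\ref{def:geodvolder} for the second term. (iv) Substitute $\dot N = BM+AN$ from the system, so $\tr(\dot N N^{-1})=\tr(BMN^{-1}+A)=\tr(BV+A)$ with $V:=MN^{-1}$, and check (a short direct computation, identical to the one in Proposition~\ref{p:LQdist}) that $V$ solves the Riccati equation \eqref{eq:riccatiproof} with limit initial condition $\lim_{t\to0^+}V(t)^{-1}=\lim_{t\to0^+}N(t)M(t)^{-1}=\mathbbold 0$, which is well posed by Appendix~\ref{s:riccati}.

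\textbf{Main obstacle.} The genuinely delicate point is Step~(i)–(ii): justifying that in the sub-Riemannian setting the right object to differentiate is still the exponential map from a single point, that its differential is nondegenerate exactly because $\gamma(1)\notin\cut(x)$, and above all that reading $DE^t_x$ through the \emph{canonical} moving frame yields precisely the normal-form matrices $A=\Gamma_1^*(\y)$, $B=\Gamma_2(\y)$ — this is where the entire machinery of Jacobi curves, the Young diagram, and the canonical frame of \cite{lizel,agzel1,agzel2} enters, and it is the content imported from Appendix~\ref{s:prel}. The appearance of the $\limsup$ (rather than a genuine limit) and the fact that one may replace balls by any nicely shrinking family is handled exactly as remarked after Definition~\ref{d:LQdist}; and the measure-density term is precisely what the geodesic volume derivative $\rho_{\mis,\g}$ was designed to capture, so once the frame is in place that term is immediate. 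I would therefore present the argument by invoking Proposition~\ref{p:LQdist}'s proof almost verbatim, flagging only that $Q$ is replaced by the time-dependent $R(t)$ and that an additional factor $\rho_{\mis,\g}(t)$ appears from the non-unimodularity of the canonical frame with respect to $\mis$.
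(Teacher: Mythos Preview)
Your proposal is correct and follows essentially the same approach as the paper's own proof: both compute $\beta_t(x,y)$ as a ratio of Jacobian determinants of the exponential map times a measure-density factor, identify the differential with the horizontal block $N(t)$ of the Jacobi matrix in a Darboux frame, differentiate $\log\det N(t)$ to obtain $\tr(BV+A)$ via $V=MN^{-1}$, and then specialize to the canonical Darboux frame so that $A,B$ take the normal form $\Gamma_1^*(\y),\Gamma_2(\y)$. The paper phrases the Jacobi-field identification slightly more explicitly via push-forwards $e^{t\vec H}_*E_i(0)$ of vertical vectors rather than as ``$DE^t_x$ read through the frame'', but this is only a notational difference.
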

\begin{proof}
Let $\lambda_0$ be the initial covector of the unique minimizing geodesic such that $\exp_x(\lambda_0) = y$. Since $y \notin \cut(x)$, there exists an open neighbourhood $\mathcal{O}$ of $y$ and $O \subset T_x^*M$ such that $\exp_x: O \to \mathcal{O}$ is a smooth diffeomorphism, and for all $\lambda' \in O$, the geodesic $t \mapsto \exp_x(t \lambda')$ is the unique minimizing geodesic joining $x$ with $y' = \exp_x(\lambda')$, and $y'$ is not conjugate with $x$ along such a geodesic. Assuming $r$ sufficiently small such that $\mathcal{B}_r(y) \subset \mathcal{O}$, let $A_r \subset O$ be the relatively compact set such that $\exp_x(A_r) = \mathcal{B}_r(y)$. The map $\exp^t_x(\cdot) = \exp_x(t\cdot)$ is a smooth diffeomorphism from $A_r$ onto $Z_t(x,\mathcal{B}_r(y))$. In particular, we have
\begin{equation}\label{eq:differentiation}
\beta_t(x,y)  = \lim_{r \downarrow 0} \frac{\int_{A_r} \exp_x^{t*} \mis}{\int_{A_r} \exp_x^{1*} \mis} = \frac{(\exp_x^{t*} \mis)(\lambda_0)}{(\exp_x^{1*} \mis)(\lambda_0)}.
\end{equation}
The right hand side of \eqref{eq:differentiation} is the ratio of two smooth tensor densities computed at $\lambda_0$. To compute it, we evaluate both factors on a $n$-tuple of independent vectors of $T_x^*M$. Thus, pick a Darboux frame $E_1(t),\dots,E_n(t),F_1(t),\dots,F_n(t) \in T_{\lambda(t)}(T^*M)$ such that $\pi_* E_i(t) = 0$ and $\pi_* F_i(t) = X_i(t)$ for all $t \in [0,1]$, $i=1,\dots,n$. Then,
\begin{equation}
(\exp_x^{t*} \mis)(E_1(0),\dots,E_n(0)) = \mis(\pi_* \circ e^{t\vec{H}}_* E_1(0),\dots,\pi_* \circ e^{t\vec{H}}_* E_n(0)).
\end{equation}
The $n$-tuple $\J_i(t)=e^{t\vec{H}}_* E_i(0)$, $i=1,\dots,n$, can be written as
\begin{equation}
\J_i(t) = \sum_{j=1}^n E_j(t) M_{ji}(t) + F_j(t) N_{ji}(t), \qquad \forall i=1,\dots,n,
\end{equation}
for some smooth families of $n\times n$ matrices $M(t),N(t)$, such that $M(0) = \mathbbold{1}$ and $N(0) = \mathbbold{0}$.
Therefore we have
\begin{equation}
\beta_t(x,y) = \frac{\det N(t)}{\det N(1)} \frac{\mis(X_1(t),\dots,X_n(t))}{\mis(X_1(1),\dots,X_n(1))}.
\end{equation}
Since $\gamma(t)$ is not conjugate to $\gamma(0)$ for $t \in (0,1]$, we have $\beta_t(x,y)>0$ on that interval or, equivalently, $N(t)$ is non-degenerate for all $t\in (0,1]$. In particular, we have
\begin{align}
\frac{d}{dt }\log\beta_t(x,y)&  = \frac{d}{d t} \log\det N(t) + \frac{d}{dt} \log \mis (X_1(t),\dots,X_n(t)) \\
& = \tr(\dot{N}(t) N(t)^{-1}) + \frac{d}{dt} \log \mis (X_1(t),\dots,X_n(t)).
\end{align}
Recall that, by Lemma \ref{l:subspacesriccati}, the pair $M(t),N(t)$ satisfies
\begin{equation}
\frac{d}{dt} \begin{pmatrix}
M \\
N
\end{pmatrix} = \begin{pmatrix}
-A(t)^* & - R(t) \\
B(t) & A(t)
\end{pmatrix} \begin{pmatrix}
M \\
N
\end{pmatrix},
\end{equation}
for some matrices $A(t)$, $B(t) \geq 0$, and $R(t) = R(t)^*$. Thus
\begin{equation}\label{eq:lastequation}
\frac{d}{dt }\log\beta_t(x,y) = \tr (B(t) V(t) + A(t)) +\frac{d}{dt} \log \mis (X_1(t),\dots,X_n(t)),
\end{equation}
where $V(t) = M(t)N(t)^{-1}$, which is well defined on $(0,1]$, satisfies 
\begin{equation}
\dot{V}(t) + A(t)^*V +V A(t) + V B(t) V + R(t) = \mathbbold{0}, \qquad \lim_{t\to 0^+} V(t)^{-1} = \mathbbold{0}.
\end{equation}
We conclude the proof by choosing $E_1(t),\dots,E_n(t),F_{1}(t),\dots, F_{n}(t)$ to be a canonical Darboux frame. In this case $A(t) = \Gamma_1^*(\y)$, $B(t) = \Gamma_2(\y)$ appearing in \eqref{eq:lastequation} are in the normal form as described in Appendix \ref{s:prel}, and $R_{ij}(t) =\mathfrak{R}_{\gamma}(t)(X_i(t),X_j(t))$. Finally, the second term in the r.h.s.\ of \eqref{eq:lastequation} is equal to $ \rho_{\mis,\g}(t)$, by definition.
\end{proof}

\subsection{Proof of Theorem \ref{t:srbe}} Assume that $R(t) \geq Q$ for a constant quadratic form $Q \in \mathrm{Sym}(n\times n)$. By the comparison theory for the matrix Riccati equation with limit initial datum (see Appendix \ref{s:riccati}), it follows that
\begin{equation}
V(t) \leq V^{\y,Q}(t), \qquad \forall t \in (0,1],
\end{equation}
where $V^{\y,Q}(t)$ is the unique solution of \eqref{eq:riccatiproof} with $R(t)$ replaced by $Q$. Using the formulas provided in Proposition \ref{p:LQdist} and Lemma \ref{l:distortioncomputed}, this implies
\begin{equation}\label{eq:dlogineq}
\frac{d}{dt}\log \beta_t(x,y) \leq  \frac{d}{dt}\log \beta_t^{\y,Q} + \rho_{\mis,\g}(t) , \qquad \forall t \in (0,1].
\end{equation}
We remark that the r.h.s.\ of the above equation would be $-\infty$ in presence of a conjugate time $t_* \in (0,1]$ of the LQ problem, which would give a contradiction to the smoothness of $\beta_t(x,y)$. Hence the first conjugate time of the LQ model must satisfy $t_c >1$, and $t \mapsto \beta_t^{\y,Q}$ is well defined, positive and smooth for all $t\in (0,1]$.

If $\rho_{\mis,\g}(t) \leq c$, then \eqref{eq:dlogineq} is equivalent to the fact that $t\mapsto e^{- ct}\beta_t(x,y)/\beta_t^{\y,Q}$ is non-increasing on $[0,1]$. Since $\beta_1(x,y) = \beta_1^{\y,Q}=1$, this implies $\beta_t(x,y) \geq \beta_t^{\y,Q}e^{c(t-1)}$.  The proof is similar assuming reversed inequalities $\mathfrak{R}_{\gamma}(t) \leq Q$ and $\rho_{\mis,\g}(t) \geq c$. \hfill \qedsymbol

\subsection{Proof of Theorem \ref{t:srbe2}}
By Lemma \ref{l:distortioncomputed} we have
\begin{equation}\label{eq:piurho2}
\frac{d}{dt}\log\beta_t(x,y)= \tr(B V(t)  + A) + \rho_{\mis,\g}(t), \qquad \forall t \in (0,1].
\end{equation}
Here $A = \Gamma_1^*(\y)$ and $B =\Gamma_2(\y)$ are the matrices defined in Appendix \ref{s:prel}. In the previous expression we can omit $A$, since $\tr(A)=0$. Since $\tr(B)=k$, we have
\begin{align}\label{eq:piurho3}
\tr(BV(t))+\rho_{\mis,\g}(t) =\tr(BV(t))+\tr\left(\frac{\rho_{\mis,\g}(t)}{k}B\right)=\tr\left(B \Vr \right),
\end{align}
where we have set (recall that for our choice $B^{2}=B$)
\begin{equation}
\Vr(t):=V(t)+\frac{\rho_{\mis,\g}(t)}{k}B.
\end{equation}
Notice that $\Vr$ is invertible for small $t$ and $\lim_{t\to 0}\Vr(t)^{-1}=\mathbbold{0}$. This is a consequence of the fact that $\lim_{t\to 0^+} V(t)^{-1} = \mathbbold{0}$, and the identity
\begin{equation}
\Vr(t)=V(t)\left(\mathbbold{1}+\frac{ \rho_{\mis,\g}(t)}{k}V(t)^{-1}B\right).
\end{equation}
Using in a crucial way that $A^{*}B=BA=\mathbbold{0}$, we see that $\Vr$ satisfies
\begin{equation}\label{eq:riccati2}
\dot{\Vr}  + A^{*} \Vr + \Vr A + \Vr B \Vr+\overline R(t) = \mathbbold{0}, \qquad \lim_{t\to 0^+} \Vr(t)^{-1} = \mathbbold{0},
\end{equation}
where we defined
\begin{equation}\label{eq:riccatiR}
\overline R(t):=R(t)-\frac{\dot \rho_{\mis,\g}(t)}{k}B-\frac{ \rho_{\mis,\g}(t)}{k}\left(\Vr(t) B+B\Vr(t)\right)+\frac{ \rho^{2}_{\mis,\g}(t)}{k^{2}}B.
\end{equation}
Notice that $\overline R$ contains a term depending on $\Vr$. In order to use the Riccati comparison theory described in Appendix \ref{s:riccati} to control $\Vr$, we need to bound $\overline R$ uniformly with respect to $\Vr$. To do it, one pays a price on the coefficient of the quadratic term of \eqref{eq:riccati2}. This fact is formalized in the next lemma.
\begin{lemma}\label{l:piurho}
For every $N>n$ let us define
\begin{equation}
\Rr:=R(t)-\left(\frac{\dot \rho_{\mis,\g}(t)}{k}+\frac{\rho^{2}_{\mis,\g}(t)}{k^{2}}\frac{n}{N-n}\right)B, \qquad \overline{B}:= \frac{n}{N}B, \qquad \overline{A} = A.
\end{equation}
Then $\Vr(t)$ satisfies the following matrix Riccati inequality
\begin{equation}\label{eq:riccati3}
\dot{\Vr}  + \overline{A}^{*} \Vr + \Vr \overline{A} + \Vr \overline{B} \Vr+\Rr \leq \mathbbold{0}, \qquad \lim_{t\to 0^+} \Vr(t)^{-1} = \mathbbold{0}.
\end{equation}
\end{lemma}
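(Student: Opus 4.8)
The plan is to start from the exact Riccati equation \eqref{eq:riccati2} satisfied by $\Vr$ with the curvature-like term $\overline R(t)$ given in \eqref{eq:riccatiR}, and absorb the two offending cross-terms $-\tfrac{\rho_{\mis,\g}}{k}(\Vr B + B\Vr)$ into the quadratic term $\Vr B\Vr$ at the cost of shrinking the coefficient $B$ to $\overline B = \tfrac{n}{N}B$. Concretely, I would form the difference between the left-hand side of the claimed inequality \eqref{eq:riccati3} and the (vanishing) left-hand side of \eqref{eq:riccati2}, so that $\dot{\Vr}$, $\overline A^*\Vr + \Vr\overline A$ (since $\overline A = A$) all cancel, and one is left with needing
\begin{equation}
\Vr \overline B \Vr + \Rr \leq \Vr B \Vr + \overline R(t).
\end{equation}
Substituting the definitions of $\Rr$ and $\overline R(t)$, and using $\overline B = \tfrac nN B$, this reduces to the pointwise matrix inequality
\begin{equation}
\frac{\rho_{\mis,\g}(t)}{k}\left(\Vr B + B\Vr\right) - \frac{n}{N-n}\frac{\rho^2_{\mis,\g}(t)}{k^2} B \;\leq\; \left(1 - \frac nN\right)\Vr B \Vr = \frac{N-n}{N}\,\Vr B \Vr,
\end{equation}
which I would rearrange as the statement that a certain quadratic expression in $\Vr$ is positive semidefinite.

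The key algebraic step is then to recognize this as a completion of the square. Writing $c = \rho_{\mis,\g}(t)/k$ and using crucially that $B^2 = B$ for the constant-curvature model matrices (so $B$ acts as a projection), I expect the inequality to be equivalent to
\begin{equation}
\frac{N-n}{N}\left(\Vr - \frac{N}{N-n}\,c\,\mathbbold{1}\right) B \left(\Vr - \frac{N}{N-n}\,c\,\mathbbold{1}\right) \geq \mathbbold{0},
\end{equation}
or a close variant thereof; since $B \geq 0$ and $X B X = (XB^{1/2})(XB^{1/2})^* \geq 0$ for symmetric $X$ when $B$ is a projection commuting appropriately, this is manifestly true. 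I would verify the constant $\tfrac{n}{N-n}$ comes out exactly right by expanding the square: the cross term produces $-2\cdot\tfrac{N-n}{N}\cdot\tfrac{N}{N-n}c\,\Vr B = -2c\,\Vr B$ (matching, after symmetrization, $c(\Vr B + B\Vr)$ up to the factor and using $B\Vr B$ bookkeeping), and the constant term produces $\tfrac{N-n}{N}\cdot\tfrac{N^2}{(N-n)^2}c^2 B = \tfrac{N}{N-n}c^2 B$, which must be reconciled with the $-\tfrac{n}{N-n}c^2 B$ appearing above plus the $c^2 B$ term hidden in $\overline R$; indeed $\tfrac{N}{N-n} = 1 + \tfrac{n}{N-n}$, so the identity checks out. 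The limit initial condition $\lim_{t\to 0^+}\Vr(t)^{-1} = \mathbbold 0$ is simply inherited unchanged from \eqref{eq:riccati2}, already established in the text preceding the lemma.

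The main obstacle I anticipate is purely bookkeeping: the matrices $A, B$ are \emph{not} multiples of the identity (they are the normal-form matrices $\Gamma_1^*(\y), \Gamma_2(\y)$ with block structure dictated by the Young diagram), so one must be careful that the "completion of the square" respects the non-commutativity of $\Vr$ with $B$ and that the symmetrization $\Vr B + B\Vr$ is what actually appears. The saving grace is that $B$ is a symmetric idempotent ($B^2 = B$, stated explicitly in the text) and $A^*B = BA = \mathbbold 0$, so $B$ behaves like an orthogonal projection onto the controllable directions; restricting attention to the range of $B$, the inequality becomes an honest scalar-coefficient quadratic form inequality of the type $v^*(\Vr - \lambda\mathbbold 1)B(\Vr - \lambda\mathbbold 1)v \geq 0$. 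Once the projection structure is used to reduce to that range, the positivity is immediate and the specific value of the coefficient $\overline B = \tfrac nN B$ is forced by requiring the discriminant-type term to close up exactly with the $\tfrac{n}{N-n}$ factor in $\Rr$.
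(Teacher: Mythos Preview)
Your approach is correct and essentially identical to the paper's: both arguments amount to the single positive-semidefiniteness inequality obtained by completing the square, which the paper writes as $\bigl(\tfrac{\rho_{\mis,\g}}{k}\,aB-\tfrac{1}{a}B\Vr\bigr)^{*}\bigl(\tfrac{\rho_{\mis,\g}}{k}\,aB-\tfrac{1}{a}B\Vr\bigr)\ge \mathbbold 0$ with $a^{2}=\tfrac{N}{N-n}$, and which you write as $\tfrac{N-n}{N}(\Vr-\lambda\mathbbold 1)B(\Vr-\lambda\mathbbold 1)\ge \mathbbold 0$ with $\lambda=\tfrac{N}{N-n}\,\tfrac{\rho_{\mis,\g}}{k}$; these are the same square since $B^{2}=B$. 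One bookkeeping remark: after substituting \emph{both} $\Rr$ and $\overline R(t)$, the coefficient in your displayed inequality should read $-\tfrac{N}{N-n}\,c^{2}B$ rather than $-\tfrac{n}{N-n}\,c^{2}B$ (you correctly account for the missing $c^{2}B$ from $\overline R$ in your subsequent verification, so the argument closes).
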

\begin{proof}[Proof of Lemma \ref{l:piurho}]
Let $a>1$ such that $1-\frac{1}{a^2} = \frac{n}{N}$. Recalling that $B^2=B$, and omitting the dependence on $t$, we have
\begin{equation}
\left(\frac{ \rho_{\mis,\g}}{k} a B-\frac{1}{a}B\Vr \right)^{*}\left(\frac{ \rho_{\mis,\g}}{k}a B-\frac{1}{a}B\Vr\right)=
\frac{ \rho^{2}_{\mis,\g}}{k^{2}}a^{2}B+\frac{1}{a^{2}}\Vr B\Vr -\frac{\rho_{\mis,\g}}{k}(\Vr B + B \Vr).
\end{equation}
The left hand side of the above is non-negative, hence 
\begin{equation}\label{eq:lltrasp2}
-\frac{\rho_{\mis,\g}}{k}(\Vr B + B \Vr) \geq 
-\frac{ \rho^{2}_{\mis,\g}}{k^{2}}a^{2}B-\frac{1}{a^{2}}\Vr B\Vr. 
\end{equation}
Replacing \eqref{eq:riccatiR} in the last term of \eqref{eq:riccati2} we obtain
\begin{equation}
\dot{\Vr}  + A^{*} \Vr + \Vr A +\left(1-\frac{1}{a^{2}}\right) \Vr B \Vr+R(t)-\frac{\dot \rho_{\mis,\g}(t)}{k}B+ (1-a^{2})\frac{ \rho^{2}_{\mis,\g}}{k^{2}}B\leq \mathbbold{0},
\end{equation}
hence the conclusion using that $1-a^2 = -\frac{n}{N-n}$ by our choice of $a$.
\end{proof}
Combining \eqref{eq:piurho2} and \eqref{eq:piurho3} we get
\begin{align}\label{eq:piurho4}
\frac{1}{N}\frac{d}{dt}\log\beta_{t}(x,y)=\frac{1}{N} \tr(B\Vr(t) + A) = \frac{1}{n} \tr(\overline{B} \Vr(t) + \overline{A}).
\end{align}
The assumption on the Bakry-Émery curvature means precisely that
\begin{equation}
\Rr \geq \frac{N}{n}Q = :\overline{Q}.
\end{equation}
Thus, by Lemma \ref{l:piurho} and Riccati comparison (see Appendix \ref{s:riccati}), we have 
\begin{equation}
\Vr(t)\leq V^{\overline{A},\overline{B},\overline{Q}}(t),
\end{equation}
where the latter is the solution of the Riccati equation associated with the LQ problem defined by $\overline{A},\overline{B}$ and $\overline{Q}$. It follows by \eqref{eq:piurho4} and Proposition \ref{p:LQdist} that
\begin{equation}\label{eq:lasteqBE}
\frac{1}{N}\frac{d}{dt}\log\beta_{t}(x,y) \leq \frac{1}{n} \frac{d}{dt}\log\beta_t^{\overline{A},\overline{B},\overline{Q}} = \frac{1}{n}\frac{d}{dt}\log\beta_t^{\y,Q}, 
\end{equation}
where, in the last equality, we used the definitions of $\overline{A},\overline{B},\overline{Q}$ and Lemma \ref{l:betaomo}.
Equation \eqref{eq:lasteqBE} is equivalent to the fact that the weighted ratio $\beta_t(x,y)^{1/N}/(\beta_t^{\y,Q})^{1/n}$ is a non-increasing function of $t$, and in particular $\beta_t(x,y)^{1/N} \geq (\beta_t^{\y,Q})^{1/n}$. \hfill \qedsymbol
\section{Ricci curvature type comparison}\label{s:proofsricci}

By Lemma \ref{l:distortioncomputed}, the distortion coefficient can be computed by solving a matrix Riccati equation. Let $\gamma$ be a geodesic on a $n$-dimensional Riemannian manifold $M$. In this case $X_1(t),\dots,X_n(t)$ are a canonical moving frame along $\gamma$ if and only if they are a parallel orthonormal frame (see Appendix \ref{s:prel}). In this case $A=\mathbbold{0}_n$, $B=\mathbbold{1}_n$, and the Riccati equation is simply
\begin{equation}\label{eq:riccati4}
\dot{V} + V^{2} +R(t)=\mathbbold{0}, \qquad  R_{ij}(t) = R_g(\dot\gamma(t),X_i(t),X_j(t),\dot\gamma(t)),
\end{equation}
where $R_g$ is the Riemann curvature tensor. Taking the trace of \eqref{eq:riccati4}, and using the Cauchy-Schwartz inequality, one shows that $v:=\tfrac{1}{n}\tr V$ satisfies
\begin{equation}\label{eq:riccati7}
\dot v+  v^{2} +\frac{r}{n} \leq 0, \qquad r(t) = \tr R(t).
\end{equation}

Notice that \eqref{eq:riccati7} is a scalar inequality, and it is simpler to handle with respect to \eqref{eq:riccati4}. Since in the Riemannian case $\frac{1}{n}\frac{d}{dt}\log \beta_t(x,y)= v$, one can prove directly from \eqref{eq:riccati7} comparison theorems for the distortion coefficient under Ricci lower curvature bounds. The same argument applies to the case of weighted Riemannian manifolds, replacing the Ricci curvature with the classical Bakry-Émery one.

\medskip
In the general sub-Riemannian setting this argument does not work. Recall that, by Lemma \ref{l:distortioncomputed}, the logarithmic derivative of $\beta_t$ is given by $\tr(BV(t)+A)=\tr(BV(t))$ (recall that $\tr A=0$), where $V(t)$ solves the general matrix Riccati equation \eqref{eq:riccatiproof}. In contrast with the Riemannian case the latter does not yield, upon tracing, a scalar differential inequality for $\tr(BV)$. It turns out that different  sets of tangent directions  along $\gamma$  behave differently, according to the structure of the Young diagram $\y$. However, we are able to trace among the directions corresponding to the rows of $\y$ that have the same length, namely rows in the same level. The proof of Theorems~\ref{t:srbericci1} and \ref{t:srbericci2} is based on the following two steps.

\medskip
\textbf{Splitting:} We split the matrix Riccati equation 
\begin{equation}
\dot{V}  + A^{*} V + V A  + V B V + R(t)= \mathbbold{0}, \qquad \lim_{t\to 0^+}V(t)^{-1} = \mathbbold{0},
\end{equation}
in several, lower-dimensional equations for special diagonal blocks of $V(t)$. In these equations, only some blocks of $R(t)$ do appear. We obtain one Riccati equation for each row of the Young diagram, of dimension equal to the length of the row. 

\medskip
\textbf{Tracing:} after the splitting step, we sum the Riccati equations corresponding to the rows with the same length, since all these equations are, in some sense, compatible (they have the same $A,B$ matrices). We obtain one Riccati equation for each level of the Young diagram, of dimension equal to the length $\ell$ of the level. The curvature matrix is replaced by a diagonal matrix, whose diagonal elements are the Ricci curvatures of the superboxes $\lev_1,\ldots,\lev_\ell$ in the given level. 

\medskip
In the Riemannian case, this procedure leads to the single, scalar Riccati inequality \eqref{eq:riccati7}, since there is only one level of length one, and a single Ricci curvature.

\subsection{Proof of Theorem~\ref{t:srbericci1}}

 Consider the Riccati Cauchy problem with limit initial datum as in Lemma \ref{l:distortioncomputed}, whose unique maximal solution is symmetric and defined on a maximal interval $I \subseteq (0,+\infty)$ (cf.\ Lemma \ref{l:limit})
\begin{equation}\label{eq:riccatibeginning}
\dot{V}  + A^* V + V A  + V B V+ R(t) = \mathbbold{0}, \qquad \lim_{t\to 0^+}V(t)^{-1} = \mathbbold{0},
\end{equation}
where $A=\Gamma_1^*(\y)$ and $B=\Gamma_2(\y)$ are the $n\times n$ matrices associated with the Young diagram  $\y$ of $\gamma$ (cf.\ Appendix \ref{s:prel}). We label the components of $V(t)\in \mathrm{Sym}(n\times n)$ according to the boxes of the Young diagram. Regard then $V(t)$ as a block matrix, labelled as the boxes of the Young diagram (cf.\ Appendix \ref{s:young}). More precisely, let $a,b=1,\dots,k$ be the rows of $\y$, of length $n_a$ and $n_b$ respectively. The block $ab$ of $V(t)$, denoted $V_{ab}(t)$ is a $n_a\times n_b$ matrix with components $V_{ai,bj}(t)$, for $i=1,\ldots,n_a$, $j=1,\ldots,n_b$. Let us focus on the diagonal blocks
\begin{equation}
V(t) = \begin{pmatrix}
V_{11}(t)   &   & * \\
 &   \ddots &  \\
* &   & V_{kk}(t) 
\end{pmatrix}.
\end{equation}
The generic $a$-th block on the diagonal $V_{aa}(t)$ satisfies
\begin{equation}
\dot{V}_{aa} + \Gamma_1 V_{aa} + V_{aa}\Gamma_1^*  + V_{aa} \Gamma_2 V_{aa} + \wt{R}_{aa}(t) = \mathbbold{0},
\end{equation}
where
\begin{equation}\label{eq:boundRtilde}
\wt{R}_{aa}(t) = R_{aa}(t) + \sum_{b\neq a} V_{ab}(t) \Gamma_2 V_{ab}^*(t) \geq R_{aa}(t).
\end{equation}
Here $\Gamma_1 = \Gamma_1(\y_a)$, $\Gamma_2=\Gamma_2(\y_a) \geq 0$ are $n_a\times n_a$ matrices corresponding to the $a$-th row $\y_a$ of the Young diagram (see Section \ref{s:matrices}). Thanks to the ampleness assumption one can show that the block $V_{aa}$ satisfies (see \cite[Lemma 5.4]{BR-comparison})
\begin{equation}
\lim_{t \to 0^+} V_{aa}(t)^{-1} = \mathbbold{0}.
\end{equation}
Hence $V_{aa}(t)$ is solution of the Riccati matrix equation with limit initial data
\begin{equation}\label{eq:pretrace}
\dot{V}_{aa}+ \Gamma_1 V_{aa} + V_{aa}\Gamma_1^* + V_{aa} \Gamma_2 V_{aa} +\wt{R}_{aa}(t)  = \mathbbold{0}, \qquad \displaystyle \lim_{t\to 0^+} V_{aa}(t)^{-1} = \mathbbold{0}.
\end{equation}

We now proceed with the second step of the proof, namely tracing over the levels of the Young diagram. 
Let $a \in \{a_1,\ldots,a_r\}$ be the rows $\y_a$ in a given level $\lev$ (of size $r$), whose rows have length $\ell =n_a$. Define the $\ell \times \ell$ symmetric matrix:
\begin{equation}
V_\lev := \frac{1}{r}\sum_{a \in \lev} V_{aa}.
\end{equation}
Starting from \eqref{eq:pretrace} it is easy to see that $V_\lev(t)$ satisfies
\begin{equation}
\dot{V}_\lev +\Gamma_1 V_\lev + V_\lev\Gamma_1^*  + V_\lev \Gamma_2 V_\lev + R_\lev(t) = \mathbbold{0}, \qquad \lim_{t\to 0^+} V_\lev(t)^{-1} = \mathbbold{0},
\end{equation}
where  the $\ell \times \ell$ matrix $R_\lev(t)$ is defined by
\begin{equation}
\begin{split}
R_\lev(t) :=\, &\frac{1}{r}\sum_{a \in \lev} \wt{R}_{aa}(t) + \frac{1}{r}\sum_{a \in \lev} V_{aa}\Gamma_2V_{aa} - V_\lev \Gamma_2 V_\lev\\
=\, &\frac{1}{r}\sum_{a \in \lev} \wt{R}_{aa}(t) + \frac{1}{r}\left[\sum_{a \in \lev} (V_{aa}\Gamma_2)(V_{aa}\Gamma_2)^* - \frac{1}{r}\left(\sum_{a \in \lev} V_{aa}\Gamma_2\right)\left(\sum_{a \in \lev} V_{aa}\Gamma_2\right)^*\right].
\end{split}
\end{equation}
It turns out that, as a consequence of a non-trivial matrix version of the Cauchy-Schwarz inequality, the term in square bracket in the above equation is non-negative (see \cite[Lemma 5.5]{BR-comparison}). Hence combining the latter with \eqref{eq:boundRtilde} we have
\begin{equation}\label{eq:almostdone}
R_\lev(t) \geq \frac{1}{r} \sum_{a \in \lev} \wt{R}_{aa}(t) \geq \frac{1}{r}\sum_{a \in \lev} R_{aa}(t).
\end{equation}
 The matrix $R(t)$ is normal in the sense of Zelenko-Li (cf.\ Definition \ref{d:normal}). In particular $R_{ai,aj} \neq 0$ if and only if $i=j$. Thus $R_{aa}(t)$ is diagonal and we have
\begin{equation}
\sum_{a \in \lev} R_{aa}(t) = \sum_{a\in\lev}\begin{pmatrix}
R_{a1,a1}(t) &   & 0 \\
 & \ddots &  \\
0 &   & R_{a\ell,a\ell}(t) \\
\end{pmatrix}=
 \begin{pmatrix}
\Ric_{\gamma}^{\lev_1}(t) &   & 0 \\
 & \ddots &  \\
0 &   & \Ric_{\gamma}^{\lev_\ell}(t) \\
\end{pmatrix},
\end{equation}
where we used the definition of sub-Riemannian Ricci curvature corresponding to the level $\lev$. We have so far proved that, for any level $\lev$, the trace over the level $V_\lev(t)$ satisfies the $\ell \times \ell$ matrix Riccati equation
\begin{equation}\label{eq:riccatigigatraced}
\dot{V}_\lev + \Gamma_1 V_\lev + V_\lev \Gamma_1^*  + V_\lev \Gamma_2 V_\lev + R_\lev(t)= \mathbbold{0}, \qquad \lim_{t \to 0^+} V_\lev(t)^{-1} = \mathbbold{0},
\end{equation}
and, under our hypotheses, $R_\lev(t) \geq Q_\lev = \diag\{\k_{\lev_1},\ldots,\k_{\lev_\ell}\}$. Thus, by Riccati comparison, \eqref{eq:riccatigigatraced} implies that for any level $\lev$
\begin{equation}\label{eq:comparisonresult}
V_\lev(t) \leq V^{\y_\lev,Q_\lev}(t), \qquad \forall t \in I,
\end{equation}
where $\y_\lev$ is a Young diagram composed by a single row, of length $\ell=\ell_\lev$, and $Q_\lev = \diag\{\k_{\lev_1},\ldots,\k_{\lev_\ell}\}$. Thus, by Lemma \ref{l:distortioncomputed} and Proposition \ref{p:LQdist}, we obtain (we omit the trace-free term $A=\Gamma_1^*(\y)$ for simplicity)
\begin{align}
\frac{d}{d t}\log\beta_t(x,y) & = \tr(\Gamma_2(\y) V(t)) \\
& = \sum_{\lev}  \sum_{a \in \lev} \tr(\Gamma_2(\y_\lev) V_{aa}(t) ) \\
& = \sum_{\lev} r_\lev \tr(\Gamma_2(\y_\lev) V_\lev(t) )\\
& \leq \sum_{\lev} r_\lev \tr(\Gamma_2(\y_\lev) V^{\y_\lev,Q_\lev}(t) )\\
& =\sum_{\lev} r_\lev \frac{d}{dt}\log \beta_t^{\y_\lev,Q_\lev},
\end{align}
where we used the block-diagonal structure of $\Gamma_2(\y)$ (cf.\ Appendix \ref{s:matrices}), and the sum is over all levels $\lev$ of the Young diagram and over all rows $a$ belonging to the levels $\lev$. Furthermore, $\beta_t^{\y_\lev,Q_\lev}$ is the model distortion coefficient of a LQ model whose Young diagram is a single line of length equal to $\ell=\ell_\lev$ and $Q=\diag(\k_{\lev_1},\dots,\k_{\lev_\ell})$.

The above result means that  that the ratio $\beta_t(x,y)/\left(\prod_{\lev} \beta_t^{\y_\lev,Q_\lev}\right)^{r_\lev}$ is a non-increasing function of  $t\in (0,1]$, and in particular it is $\geq 1$.
\hfill \qedsymbol

\subsection{Proof of Theorem~\ref{t:srbericci2}}

We argue as in the proof of Theorem \ref{t:srbe2}. We consider, instead of the matrix $V(t)$ solution of \eqref{eq:riccatibeginning}, the matrix
\begin{equation}
\Vr(t) = V(t) + \frac{\rho_{\mis,\g}(t)}{k} B,
\end{equation}
that satisfies the matrix Riccati inequality
\begin{equation}
\dot{\Vr}  + \overline{A}^{*} \Vr + \Vr \overline{A} + \Vr \overline{B} \Vr+\Rr \leq \mathbbold{0}, \qquad \lim_{t\to 0^+} \Vr(t)^{-1} = \mathbbold{0},
\end{equation}
where
\begin{equation}
\Rr:=R(t)-\left(\frac{\dot \rho_{\mis,\g}(t)}{k}+\frac{n}{N-n}\frac{\rho^{2}_{\mis,\g}(t)}{k^{2}}\right)B, \qquad  \overline{B}:= \frac{n}{N}B, \qquad \overline{A} = A.
\end{equation}
The matrix $\Vr(t)$ is related to the distortion coefficient by the formula
\begin{equation}
\frac{1}{N}\frac{d}{dt}\log \beta_t(x,y) = \frac{1}{n} \tr(\overline{B} \Vr(t) +\overline{A}) =  \frac{1}{n} \tr(\overline{B} \Vr(t) +\overline{A}) .
\end{equation}
Using now the same technique as in the proof of Theorem \ref{t:srbericci1}, we obtain under the assumptions on the  sub-Riemannian Bakry-Émery Ricci curvature that the ratio
$\beta_t(x,y)^{1/N}/\left(\prod_{\lev} \beta_t^{\y_\lev,Q_\lev}\right)^{r_\lev/n}$ is a non-increasing function of $t\in (0,1]$, and in particular it is $\geq 1$. \hfill \qedsymbol

\subsection{Proof of Theorem \ref{t:mcp}}

By assumption $\rho_{\mis,\g} \leq 0$, and we can use Theorem \ref{t:srbericci1}. One should be careful, since for the latter we employ the general notation, while for Theorem \ref{t:mcp} we label the Ricci curvatures according to Figure \ref{f:YdMCP}. 

The Young diagram of $\gamma$ has two levels. For the Ricci curvatures of the first level, by our assumptions, it holds
\begin{equation}
\frac{1}{n-k}\mathfrak{Ric}_{\gamma}^b(t) \geq \k_b, \qquad \frac{1}{n-k}\mathfrak{Ric}_{\gamma}^a(t) \geq \k_a, \qquad \forall t \in [0,1],
\end{equation}
for some $\k_a,\k_b$ such that $\k_b \geq 0$ and $4\k_a+\k_b^2 \geq 0$. Up to reducing $\k_a$, and relabelling the constants, we can find $\k_1,\k_2 \in \R$ such that
\begin{equation}
\frac{1}{n-k}\mathfrak{Ric}_{\gamma}^b(t) \geq \k_1, \qquad \frac{1}{n-k}\mathfrak{Ric}_{\gamma}^a(t) \geq \k_2, \qquad \forall t \in [0,1],
\end{equation}
with $\k_1 \geq 0$ and $4\k_2+\k_1^2=0$. The corresponding LQ model, associated with a Young diagram of one line and two columns, and with $Q=\diag\{\k_1,\k_2\}$. Let us denote by $\beta_t^{\k_1,\k_2}$ the corresponding distortion coefficient, which is precisely the subcase discussed in Section \ref{s:subcase2}.

For the Ricci curvatures of the second level we have
\begin{equation}
\mathfrak{Ric}_{\gamma}^c(t) \geq 0, \qquad \forall t \in [0,1].
\end{equation}
The corresponding LQ model, associated with Young diagram of a single block, and $Q=0$, is the flat Riemannian one discussed in Section \ref{ex:1}, that is $\beta_t^0 = t$.

The comparison function of Theorem \ref{t:srbericci1} is the product  of two factors, one for each level, raised to the appropriate power depending on the size of the level ($n-k$ for the first level, and $2k-n$ for the second level, see Figure \ref{f:YdMCP}). We obtain that
\begin{equation}
\frac{\beta_t(x,y)}{(\beta_t^{\k_1,\k_2})^{n-k} (\beta_t^0)^{2k-n}} \text{ is a non-increasing function of $t\in(0,1]$}.
\end{equation}
As we already remarked $\beta_t^0=t$. Furthermore, by Lemma \ref{l:subcase2}, $\beta_t^{\k_1,\k_2}/ t^4$ is a non-increasing function of $t\in (0,1]$. We conclude that
\begin{equation}
\frac{\beta_t(x,y)}{t^{4(n-k)} \times t^{2k-n}} \text{ is a non-increasing function of $t\in(0,1]$}.
\end{equation}
In particular, since $\beta_1(x,y)=1$, we have that $\beta_t(x,y) \geq t^{k+3(n-k)}$ for all $t\in[0,1]$.

The exponent $k +3(n-k)$ is the smallest possible. This can be seen as follows. If $y\notin \cut(x)$ the asymptotics as $t\to 0$ of $\beta_t(x,y)$ is equal to the asymptotics of the Jacobian determinant of the sub-Riemannian exponential map $\exp_x : T_x^* M \to M$. If the geodesic $\gamma(t) = \exp_x(t\lambda)$ is ample and equiregular with Young diagram $\y$, this asymptotics is given by the geodesic dimension $\mathcal{N}_\lambda$ (see \cite[Lemma 6.27]{curvature}). If the Young diagram has two columns, then $\mathcal{N}_\lambda =k+3(n-k)$. \hfill \qedsymbol
\section{Applications}\label{s:applications}

In this section we apply our comparison results to the class of Sasakian manifolds (which contains the Heisenberg groups as a particular case), and 3-Sasakian manifolds. In both cases we provide formulas for the sub-Riemannian Ricci curvatures, written in terms of a suitable connection.

\subsection{Sasakian manifolds}\label{s:61}
We follow the notation of \cite{ABR-contact}, to which we refer to for details and references. A contact manifold $(M,\omega)$ is a smooth odd-dimensional manifold endowed with a 1-form such that $d\omega$ is non-degenerate on $\ker \omega$. We endow $\distr=\ker \omega$ with a sub-Riemannian metric $g$. The \emph{Reeb vector field} $X_{0}$ is the unique vector field satisfying $\omega(X_{0})=1$ and $d\omega(X_{0},\cdot)=0$. Since $X_0$ is transverse to $\distr$, we can extend $g$ to a Riemannian structure on $M$, by declaring $X_0$ to be unit and orthogonal to $\distr$. The contact endomorphism $J:TM\to TM$ is defined by:
\begin{equation}
g(X,JY)=d\omega(X,Y),\qquad \forall X,Y\in TM. 
\end{equation}
We always assume that $J$ is an almost-complex structure on $\distr$, that is $J^2|_{\distr} = -I_{\distr}$. In this case the Riemannian volume, denoted $\vol_{g}$, coincides with the canonical Popp volume of the sub-Riemannian structure $(M,\distr,g)$, see \cite{BR-Popp}.

There always exists a canonical metric and linear connection, with non-vanishing torsion $\Tor$, called Tanno's connection $\nabla$. We denote by $ \mathrm{R}$ and $ \mathrm{Ric}$ the corresponding Riemann and Ricci tensor. The structure is \emph{Sasakian} if the following tensors vanish:
\begin{equation}
 Q(X,Y)= (\nabla_Y J)X=0,\qquad \tau(X) = \Tor(X_0,X) = 0, \qquad \forall X,Y \in TM.
\end{equation}

\subsubsection{Young diagram and curvature}

A horizontal curve $\gamma$ is a geodesic if and only if there exists a constant $h_{0} \in \R$ such that (cf.\ \cite[Lemma~6.7]{ABR-contact})
\begin{equation}\label{eq:hzero}
\nabla_{\ganf}\ganf=h_{0}J\ganf.
\end{equation} 
All non-trivial geodesic have the same Young diagram, with two columns and $3$ superboxes. We label them according to the convention of Figure \ref{f:YdMCP} in the Introduction, with $n=2d+1$ and $k=2d$.

\begin{lemma}\label{l:sasaky}
Let $M$ be a $(2d+1)$-dimensional contact Sasakian manifold and let $\mis=e^{-\psi}\vol_{g}$ be a smooth measure. Then along a geodesic $\gamma$ we have
\begin{align}
\mathfrak{Ric}^{a}_{\gamma}(t) & =  0, \\
\mathfrak{Ric}^{b}_{\gamma}(t)  & = \frac{1}{\|\dot\gamma\|^2}\mathrm{R}(\ganf,J\ganf,J\ganf,\ganf)  + h_0^2, \\
\mathfrak{Ric}^{c}_{\gamma}(t) & =  \frac{1}{\|\dot\gamma\|^2}\left( \mathrm{Ric}(\ganf) - \mathrm{R}(\ganf,J\ganf,J\ganf,\ganf)\right)+ \frac{1}{4}h_0^2(2d-2),
\end{align}
where in the right hand side we omitted the explicit dependence on $t$. Moreover
\begin{equation}\label{eq:rhom}
\rho_{\mis,\g}=-g(\nabla \psi, \ganf),\qquad \dot \rho_{\mis,\g}=-\nabla^{2}\psi(\ganf,\ganf)-h_{0}g(\nabla \psi,J\ganf).
\end{equation}
 \end{lemma}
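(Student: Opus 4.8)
The statement to be proved is Lemma~\ref{l:sasaky}, which computes the three sub-Riemannian Ricci curvatures $\mathfrak{Ric}^a_\gamma,\mathfrak{Ric}^b_\gamma,\mathfrak{Ric}^c_\gamma$ along a geodesic of a Sasakian contact manifold, together with the formulas for $\rho_{\mis,\g}$ and $\dot\rho_{\mis,\g}$. The strategy is to unwind the definition \eqref{eq:srbakryemery} of these curvatures as partial traces of the canonical curvature operator $\mathfrak{R}_\gamma(t)$ over the invariant subspaces $S^{\lev_i}_{\gamma(t)}$ attached to the superboxes $a,b,c$, and then to express everything in terms of Tanno's connection $\nabla$, using the known structure theory for contact Sasakian geodesics (reference \cite{ABR-contact}). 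First I would recall from \cite{ABR-contact} the explicit description of the canonical moving frame $X_1(t),\dots,X_n(t)$ along a Sasakian geodesic: because all non-trivial geodesics have the two-column Young diagram of Figure~\ref{f:YdMCP} with $n=2d+1$, $k=2d$, the frame splits into a distinguished pair spanning the ``$b$/$a$'' plane (built from $\dot\gamma$ and $J\dot\gamma$ suitably normalized and transported), and a $(2d-2)$-dimensional orthonormal family in the level of length one (the ``$c$'' directions), transverse to $\spn\{\dot\gamma,J\dot\gamma\}$ inside $\distr$. The Reeb direction $X_0$ lies in the bottom level as well. The geodesic equation \eqref{eq:hzero}, $\nabla_{\ganf}\ganf=h_0 J\ganf$, fixes the constant $h_0$ that will appear everywhere.

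The core computation is then: evaluate $\mathfrak{R}_\gamma(t)(X_i(t),X_j(t))$ on this canonical frame. Here I would invoke the identification, established in the sub-Riemannian curvature literature (\cite{curvature,BR-connection,ABR-contact}), between the Agrachev–Zelenko–Li canonical curvature $\mathfrak{R}_\gamma$ and the curvature of Tanno's connection plus torsion correction terms; in the Sasakian case the torsion tensors $Q$ and $\tau$ vanish, which dramatically simplifies these correction terms and leaves a clean expression involving only $\mathrm{R}$, $\mathrm{Ric}$, $h_0$, and $\|\dot\gamma\|$. Taking the partial trace over $S^a$ gives $0$ (the ``$a$'' superbox carries no curvature, reflecting that there is no distortion in the direction of motion, consistent with Remark~\ref{r:riemann} that $\mathfrak{R}_\gamma(\dot\gamma,\dot\gamma)=0$ and the analogous vanishing in the conjugate ``$a$'' direction). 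The trace over $S^b$ picks out the single sectional-type term $\frac{1}{\|\dot\gamma\|^2}\mathrm{R}(\ganf,J\ganf,J\ganf,\ganf)$ plus the curvature contribution $h_0^2$ coming from the geodesic equation. The trace over $S^c$, being over the $(2d-2)$ transverse horizontal directions, yields $\mathrm{Ric}(\ganf)$ minus the already-counted plane term $\mathrm{R}(\ganf,J\ganf,J\ganf,\ganf)$ (and minus the Reeb-direction contribution, which vanishes in the Sasakian case since $\tau=0$), normalized by $\|\dot\gamma\|^2$, plus the term $\frac14 h_0^2(2d-2)$ again arising from the $h_0 J\ganf$ acceleration distributed over the $2d-2$ directions.

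For the last two formulas, I would use the Riemannian-type remark following Definition~\ref{def:geodvolder}: since the Popp volume equals $\vol_g$ and $\mis=e^{-\psi}\vol_g$, one has $\rho_{\mis,\g}(t)=\frac{d}{dt}\log\mis_{\gamma(t)}(X_1,\dots,X_n)=-g(\nabla\psi,\ganf)+\rho_{\vol_g,\g}(t)$, and $\rho_{\vol_g,\g}\equiv 0$ because the canonical frame is volume-preserving for the Popp/Riemannian volume along Sasakian geodesics. Differentiating $\rho_{\mis,\g}=-g(\nabla\psi,\ganf)$ along $\gamma$ and using the metric property of $\nabla$ together with $\nabla_{\ganf}\ganf=h_0 J\ganf$ gives $\dot\rho_{\mis,\g}=-(\nabla^2\psi)(\ganf,\ganf)-h_0\,g(\nabla\psi,J\ganf)$, which is \eqref{eq:rhom}. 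The main obstacle I expect is the first step: correctly assembling the canonical moving frame for the two-column Sasakian Young diagram and verifying exactly which torsion/curvature terms of Tanno's connection survive in $\mathfrak{R}_\gamma$ — this requires care with the normalization conventions of \cite{ABR-contact}, the precise structure of the invariant subspaces $S^a,S^b,S^c$, and the bookkeeping of the $h_0$-dependent terms; once that dictionary is in place the partial traces are essentially a matter of linear algebra.
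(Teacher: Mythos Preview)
Your overall strategy matches what is actually done in \cite{ABR-contact}, but the paper itself does not redo that computation: it simply cites \cite[Thm.~6.3]{ABR-contact} for the three Ricci curvatures and \cite[p.~402]{ABR-contact} for $\rho_{\vol_g,\gamma}=0$, and then carries out only the short derivation of $\rho_{\mis,\gamma}$ and $\dot\rho_{\mis,\gamma}$ exactly as you outline. So your proposal is more ambitious than the paper's proof, which is essentially a citation plus the two-line computation you describe at the end.

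There is, however, a genuine error in your description of the invariant subspaces, and if you actually tried to carry out the computation along the lines you sketch it would not come out right. In the two-column Sasakian diagram with $n=2d+1$, $k=2d$, the first level has size $n-k=1$ and length $2$: the superbox $b$ (first column) is spanned by the horizontal direction $J\dot\gamma/\|\dot\gamma\|$, while the superbox $a$ (second column) is spanned by the Reeb vector $X_0$, not by anything related to the direction of motion. The direction of motion $\dot\gamma$ sits in the \emph{second} level, i.e.\ in $S^c$, which has size $2k-n=2d-1$. Consequently your explanation that $\mathfrak{Ric}^a_\gamma=0$ ``reflects that there is no distortion in the direction of motion'' is incorrect: the vanishing of $\mathfrak{Ric}^a_\gamma$ is a genuinely Sasakian phenomenon, coming from $\tau=\Tor(X_0,\cdot)=0$ (for a general contact structure $\mathfrak{Ric}^a_\gamma$ involves $\tau$ and does not vanish). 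Likewise, the ``minus the Reeb-direction contribution'' you insert in the $S^c$ trace is misplaced, since the Reeb direction is not in $S^c$ at all. Your treatment of $\rho_{\mis,\gamma}$ and $\dot\rho_{\mis,\gamma}$ is correct and coincides with the paper's argument.
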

 \begin{proof} The sub-Riemannian Ricci curvatures are computed in \cite[Thm.\ 6.3]{ABR-contact}. Moreover, from the formulas of \cite[p.~402]{ABR-contact}, it follows that $\rho_{\vol_{g},\g}=0$ for any geodesic.  Since $\mis=e^{-\psi}\vol_{g}$, we have that
 $\rho_{\mis,\g}=-g(\nabla \psi, \ganf)$. It follows that
\begin{equation}
 \dot \rho_{\mis,\g}=-\nabla_{\ganf} \left(g(\nabla \psi, \ganf)\right)= -\nabla^{2}\psi(\ganf,\ganf)-g(\nabla \psi,\nabla_{\ganf}\ganf).
\end{equation}
We conclude using equation \eqref{eq:hzero}.
 \end{proof}
 \begin{rmk}
Structures admitting a measure $\mis_{0}$ such that $\rho_{\mis_{0},\g}=0$ along every geodesic are called \emph{unimodular} in \cite{ABP}.
\end{rmk}

To compute the sub-Riemannian Bakry-Émery Ricci curvature we use Remark \ref{rmk:howtocomputesrBE}. In the Sasakian case, the three superboxes are denoted $a,b,c$, and their sizes are $1$, $2$ and $2d-2$, respectively. Therefore using Lemma~\ref{l:sasaky} we obtain
\begin{align}
 \mathfrak{Ric}^{N,a}_{\mis,\gamma}(t)&= 0, \label{eq:besas-a}\\
 \mathfrak{Ric}^{N,b}_{\mis,\gamma}(t)&= \frac{1}{\|\dot\gamma\|^2} \mathrm{R}(\ganf,J\ganf,J\ganf,\ganf)  + h_0^2 +\frac{1}{2d}\left(\nabla^{2}\psi(\ganf,\ganf) +h_{0}g(\nabla \psi,J\ganf) \right) \label{eq:besas-b}\\
 & \qquad -\frac{2d+1}{N-2d-1}\frac{g(\nabla \psi,\ganf)^{2}}{(2d)^2}, \nonumber \\
 \mathfrak{Ric}^{N,c}_{\mis,\gamma}(t)&= \frac{1}{\|\dot\gamma\|^2}\left(\mathrm{Ric}(\ganf) - \mathrm{R}(\ganf,J\ganf,J\ganf,\ganf)\right) + \frac{1}{4}h_0^2(2d-2)  \label{eq:besas-c}\\
 & \qquad +\frac{2d-2}{2d}\left(\nabla^{2}\psi(\ganf,\ganf)+h_{0}g(\nabla \psi,J\ganf)\right) \nonumber \\  & \qquad -\frac{(2d+1)(2d-2)}{N-2d-1}\frac{g(\nabla \psi,\ganf)^{2}}{(2d)^2}.  \nonumber
 \end{align}
Specifying Theorem~\ref{t:srbericci-sharp} to this setting, and using the model space coefficients of Section~\ref{ex:2}, we obtain the following statement.
\begin{theorem}\label{t:accadued}
Let $M$ be a $(2d+1)$-dimensional contact Sasakian manifold and $\mis=e^{-\psi}\vol_{g}$ be a smooth measure. Let $\gamma :[0,1] \to M$ be a minimizing geodesic between $x$ and $y$, with $(x,y)\notin \cut(M)$. Assume that there exists $N>2d+1$ and $\k_b,\k_c\in \R$ such that
\begin{align} \label{eq:duericci}
 \mathfrak{Ric}^{N,b}_{\mis,\gamma}(t)\geq \frac{N-1}{2d}\k_{b},\qquad  \frac{1}{2d-2}\mathfrak{Ric}^{N,c}_{\mis,\gamma}(t)\geq \frac{N-1}{2d}\k_{c},
\end{align}
with the convention that, if $d=1$, the second assumption can be omitted. Then
\begin{equation}\label{eq:137}
\beta_{t}(x,y)^{\frac{1}{N-1}}\geq t^{\frac{1}{N-1}}\left(\frac{\sin(t\alpha)}{\sin (\alpha)}\right)^{\frac{2d-2}{2d}} \left(\frac{\sin(t\theta)}{\sin (\theta)} \frac{t\theta \cos(t\theta)-\sin(t\theta)}{\theta \cos(\theta)-\sin(\theta)}\right)^{\frac{1}{2d}},
\end{equation}
for all $t\in [0,1]$, where $\alpha=\sqrt{\k_{c}}$ and $\theta = \sqrt{\k_{b}}/2$.
\end{theorem}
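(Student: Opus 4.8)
The plan is to specialise Theorem~\ref{t:srbericci-sharp} to the two-column Young diagram $\y$ of a Sasakian geodesic (Figure~\ref{f:YdMCP} with $n=2d+1$ and $k=2d$), feeding in the sub-Riemannian Bakry--Émery Ricci curvatures computed in \eqref{eq:besas-a}--\eqref{eq:besas-c}, and then to recognise the two resulting model distortion coefficients among the explicit formulas of Section~\ref{s:somemodels}.

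First I would recall that every non-trivial Sasakian geodesic is ample and equiregular with the Young diagram of Figure~\ref{f:YdMCP}: a top level of length $2$ and size $n-k=1$ with superboxes $b$ and $a$ (in this left-to-right order), and a bottom level of length $1$ and size $2k-n=2d-1$ with superbox $c$; moreover $n-1=2d$. Applying the ``direction of motion'' convention of Theorem~\ref{t:srbericci-sharp}, the effective size of the bottom level is $2d-1-1=2d-2$, and this level is dropped entirely when $d=1$, while the top level is unaffected. Next I would check the curvature hypotheses one superbox at a time. By \eqref{eq:besas-a} we have $\mathfrak{Ric}^{N,a}_{\mis,\gamma}(t)=0$, so we may take $\kappa_a=0$; the bound $\mathfrak{Ric}^{N,b}_{\mis,\gamma}(t)\geq\frac{N-1}{2d}\kappa_b$ is exactly the first inequality of \eqref{eq:duericci} (with $r_\lev=1$ for the top level); and $\frac{1}{2d-2}\mathfrak{Ric}^{N,c}_{\mis,\gamma}(t)\geq\frac{N-1}{2d}\kappa_c$ is the second inequality of \eqref{eq:duericci}, with the effective size $2d-2$ playing the role of $r_\lev$. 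When $d=1$ the bottom level disappears and the second condition is vacuous, which explains the stated convention.

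Theorem~\ref{t:srbericci-sharp} then yields
\[
\beta_t(x,y)^{1/(N-1)}\geq t^{1/(N-1)}\prod_{\lev\in\Upsilon}\bigl(\beta_t^{\y_\lev,Q_\lev}\bigr)^{r_\lev/(n-1)},
\]
where for the top level $\y_\lev$ is a single row of length $2$ with $Q_\lev=\diag(\kappa_b,\kappa_a)=\diag(\kappa_b,0)$ and exponent $r_\lev/(n-1)=1/(2d)$, and for the bottom level $\y_\lev$ is a single row of length $1$ with $Q_\lev=(\kappa_c)$ and exponent $(2d-2)/(2d)$. It then remains to substitute the explicit models from Section~\ref{s:somemodels}: the length-$2$ model with $Q=\diag(\kappa_b,0)$ is precisely the subcase $\kappa_2=0$ of Section~\ref{s:subcase1}, whose distortion coefficient is $\frac{\sin(t\theta)}{\sin\theta}\frac{t\theta\cos(t\theta)-\sin(t\theta)}{\theta\cos\theta-\sin\theta}$ with $\theta=\sqrt{\kappa_b}/2$ by \eqref{eq:stima76}, while the length-$1$ model is the one-dimensional constant-curvature model of Section~\ref{ex:1}, with coefficient $\sin(t\alpha)/\sin\alpha$ and $\alpha=\sqrt{\kappa_c}$. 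Plugging these in gives exactly \eqref{eq:137}.

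There is no genuine analytic obstacle: the splitting and tracing of the matrix Riccati equation over the levels, the Riccati comparison with limit initial data, and the matrix Cauchy--Schwarz inequalities are all packaged inside Theorem~\ref{t:srbericci-sharp}; the curvature identities are Lemma~\ref{l:sasaky} together with \eqref{eq:besas-a}--\eqref{eq:besas-c}; and the model coefficients are computed in Section~\ref{s:somemodels}. The delicate points are purely bookkeeping: getting the left-to-right order $(\kappa_{\lev_1},\kappa_{\lev_2})=(\kappa_b,\kappa_a)$ of the top-level superboxes right, so that one lands on the $\kappa_2=0$ model and not on the inequivalent $\kappa_1=0$ one (the length-$2$ LQ normal form is asymmetric in its two coordinates); consistently applying the reduction $r_\lev\mapsto r_\lev-1$ for the length-$1$ level, both in the hypothesis and in the exponent; and handling the degenerate case $d=1$. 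As in the proof of Theorem~\ref{t:srbe}, one should also note that $(x,y)\notin\cut(M)$ forces the first conjugate times of both LQ models to exceed $1$, so that the model coefficients above are positive and smooth on $(0,1]$.
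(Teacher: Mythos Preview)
Your proposal is correct and follows exactly the approach indicated by the paper: specialise Theorem~\ref{t:srbericci-sharp} to the two-column Sasakian Young diagram, use the Bakry--\'Emery Ricci curvatures \eqref{eq:besas-a}--\eqref{eq:besas-c} (in particular $\mathfrak{Ric}^{N,a}_{\mis,\gamma}=0$, so $\kappa_a=0$), and identify the resulting model coefficients via Sections~\ref{ex:1} and~\ref{s:subcase1}. Your bookkeeping on the superbox order $(\kappa_b,\kappa_a)$, the size reduction $r_\lev\mapsto r_\lev-1$ on the length-$1$ level, and the degeneration at $d=1$ is all correct and in fact more explicit than what the paper records.
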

The right hand side of \eqref{eq:137} is understood as an analytic function of $\alpha,\theta$, as explained in Section~\ref{ex:2}. If $\psi$ is constant, then we can set formally $N=2d+1$ in Theorem \ref{t:accadued}, and the Bakry-Émery Ricci curvature is given by the simple formulas in Lemma \ref{l:sasaky}. In this latter case we recover the results of \cite{LLZ-Sasakian}.

\subsection{Weighted Heisenberg group}\label{s:Heisenberg}

Let us consider the three-dimensional Heisenberg group $\mathbb{H}=\mathbb{H}^3$, that is $\R^3$ endowed with the  sub-Riemannian structure defined by the global orthonormal frame
\begin{equation}
X_{1}=\frac{\partial}{\partial x_1}-\frac{x_{2}}{2}\frac{\partial}{\partial x_3},\qquad X_{2}=\frac{\partial}{\partial x_2}+\frac{x_{1}}{2}\frac{\partial}{\partial x_3}.
\end{equation}
It is well-known that this structure is Sasakian, with the canonical choice of contact form $\omega = dx_3 - \tfrac{1}{2}(x_1 dx_2  - x_2dx_1)$. Furthermore, $\vol_{g}$ is proportional to the Lebesgue measure of $\mathbb{R}^3$. We equip $\mathbb{H}$ with the weighted measure $\mis=e^{-\psi}\vol_{g}$, and we follow the notation of Section~\ref{s:61}.

Out goal is to apply Theorem~\ref{t:accadued} to $\mathbb{H}$, for which $\mathrm{R}(X,Y,Y,X)=0$ for all horizontal $X,Y$. Since $d=1$, we only need to provide a lower bound of the form:
\begin{equation}\label{eq:zetab}
 \mathfrak{Ric}^{N,b}_{\mis,\gamma}(t)=h_0^2 +\mathfrak{Z}(t)\geq \frac{N-1}{2d}\k_{b}.
 \end{equation}
For simplicity, we restrict to the case $\kappa_{b}=0$, which yields simpler polynomial bounds on the distortion coefficient. Recall from the previous section that
\begin{align}  \label{eq:zetaa}
\mathfrak{Z}(t)
&=\frac{1}{2}\left(\nabla^{2}\psi(\ganf,\ganf) +h_{0}g(\nabla \psi,J\ganf) \right) -\frac{3}{N-3}\frac{g(\nabla \psi,\ganf)^{2}}{4}.
\end{align}
We remark that, in this case, $\nabla$ coincides with the Tanaka-Webster connection of the contact structure. Let us denote the the horizontal gradient and the symmetrized horizontal Hessian of a smooth function $\psi$ by
\begin{equation}
\nabla_H \psi=(X_{1}\psi) X_{1} +(X_{2}\psi) X_{2}, \qquad (D_H^{2}\psi)^{*} = \left(\frac{X_i X_j\psi + X_j X_i\psi}{2}\right)_{i,j=1,2}.
\end{equation}
Let $\mathcal{B}_R(0)$ be the metric ball of radius $R>0$ centered at the origin, and set
\begin{equation}
L_R:= \sup_{\mathcal{B}_R(0)} \|\nabla_H \psi\|, \qquad C_R:=\inf_{\mathcal{B}_R(0)}(D^2_H\psi)^*,
\end{equation}
where the infimum denotes the infimum of the eigenvalues of the quadratic forms $(D_H^{2}\psi)^{*}(z)$ for $z \in \mathcal{B}_R(0)$. From \eqref{eq:zetab} and \eqref{eq:zetaa} we deduce the following lower bound.

\begin{lemma} \label{l:exempio}
Let $(x,y)\notin \cut(\mathbb{H})$, and let $\gamma:[0,1]\to \mathbb{H}$ be the geodesic joining $x$ with $y$. Let $R>0$ such that $x,y\in \mathcal{B}_R(0)$. Then it holds
\begin{equation}\label{eq:exempio}
\mathfrak{Ric}^{N,b}_{\mis,\gamma}(t) \geq  \left[ \frac{1}{2}\left(C_R - \frac{|h_0|}{\|\dot\gamma\|} L_R\right) - \frac{3}{4(N-3)}L_R^2 \right]\|\dot\gamma\|^2.
\end{equation}
\end{lemma}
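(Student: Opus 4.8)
The plan is to estimate, term by term, the closed formula for $\mathfrak{Ric}^{N,b}_{\mis,\gamma}(t)$ recorded in \eqref{eq:zetab}--\eqref{eq:zetaa} — equivalently \eqref{eq:besas-b} specialised to $d=1$ and, on $\mathbb{H}$, to $\mathrm{R}\equiv 0$:
\begin{equation}
\mathfrak{Ric}^{N,b}_{\mis,\gamma}(t) = h_0^2 + \frac{1}{2}\nabla^2\psi(\dot\gamma,\dot\gamma) + \frac{h_0}{2}\,g(\nabla\psi,J\dot\gamma) - \frac{3}{4(N-3)}\,g(\nabla\psi,\dot\gamma)^2 .
\end{equation}
Throughout one uses that along a geodesic the speed $\|\dot\gamma\|$ is constant, that $\dot\gamma$ and $J\dot\gamma$ are horizontal with $\|J\dot\gamma\|=\|\dot\gamma\|$ (since $J$ is a $g$-isometry of $\distr$), and that $\gamma(t)\in\mathcal{B}_R(0)$ for all $t$, so that the quantities defining $L_R$ and $C_R$ apply pointwise along $\gamma$.

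First I would drop the term $h_0^2\ge 0$. For the two first-order terms I would note that, $\dot\gamma$ being horizontal, $g(\nabla\psi,\dot\gamma)=d\psi(\dot\gamma)=g(\nabla_H\psi,\dot\gamma)$, so Cauchy--Schwarz gives $|g(\nabla\psi,\dot\gamma)|\le\|\nabla_H\psi\|\,\|\dot\gamma\|\le L_R\|\dot\gamma\|$, and likewise $|g(\nabla\psi,J\dot\gamma)|\le L_R\|\dot\gamma\|$; hence $-\frac{3}{4(N-3)}g(\nabla\psi,\dot\gamma)^2\ge-\frac{3}{4(N-3)}L_R^2\|\dot\gamma\|^2$ and $\frac{h_0}{2}g(\nabla\psi,J\dot\gamma)\ge-\frac{|h_0|}{2}L_R\|\dot\gamma\|=-\frac{|h_0|}{2\|\dot\gamma\|}L_R\|\dot\gamma\|^2$. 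For the Hessian term I would use that on $\mathbb{H}$ Tanno's connection agrees with the Tanaka--Webster connection, for which $\nabla_{X_i}X_j=0$ ($i,j\in\{1,2\}$), so that $\nabla^2\psi(X_i,X_j)=X_iX_j\psi$; contracting the symmetric tensor $\dot\gamma\otimes\dot\gamma$ against this and symmetrising in $(i,j)$ identifies $\nabla^2\psi(\dot\gamma,\dot\gamma)$ with the symmetrised horizontal Hessian $(D_H^{2}\psi)^{*}(\dot\gamma,\dot\gamma)$, which is $\ge C_R\|\dot\gamma\|^2$ by the definition of $C_R$ as an infimum of eigenvalues over $\mathcal{B}_R(0)$. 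Adding the four lower bounds and factoring out $\|\dot\gamma\|^2>0$ (the case $x=y$ being vacuous) produces exactly the right-hand side of \eqref{eq:exempio}.

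There is no serious obstacle: everything is a direct estimate once \eqref{eq:zetab}--\eqref{eq:zetaa} is granted. The two points I would double-check are (i) the identification $\nabla^2\psi|_{\distr}=(D_H^{2}\psi)^{*}$, which genuinely uses the parallelism of the standard frame for the Tanaka--Webster connection on $\mathbb{H}$, and (ii) that $L_R$ and $C_R$ are evaluated along the whole geodesic rather than only at its endpoints — this is where the hypothesis $x,y\in\mathcal{B}_R(0)$ enters, with the caveat that the triangle inequality only guarantees $\gamma([0,1])\subset\mathcal{B}_{2R}(0)$, so in practice one takes $R$ large enough that $\gamma([0,1])\subset\mathcal{B}_R(0)$ (or replaces $L_R,C_R$ by $L_{2R},C_{2R}$).
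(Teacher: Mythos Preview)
Your proof is correct and is exactly the term-by-term estimate the paper has in mind when it writes ``From \eqref{eq:zetab} and \eqref{eq:zetaa} we deduce the following lower bound'' without further details. Your caveat about $\gamma([0,1])\subset\mathcal{B}_R(0)$ versus the stated hypothesis $x,y\in\mathcal{B}_R(0)$ is well-taken; the paper itself tacitly assumes the geodesic stays in the ball (cf.\ the later remark where this is made explicit).
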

From Lemma~\ref{l:exempio} and Theorem~\ref{t:accadued} we obtain the following result.
\begin{cor}\label{c:convex}
Let $\mathbb{H}$ be the three-dimensional Heisenberg group, equipped with the smooth measure $\mis=e^{-\psi}\vol_{g}$. Assume that for some $R>0$, it holds
\begin{equation}\label{eq:hconv}
C_R=\inf_{\mathcal{B}_R(0)}(D^2_H\psi)^*>0.
\end{equation}
Let $(x,y)\notin \cut(\mathbb{H})$, with $x,y \in \mathcal{B}_R(0)$. Assume that the unique geodesic $\gamma:[0,1]\to \mathbb{H}$ joining $x$ with $y$ is such that
\begin{equation}\label{eq:farfalla}
|h_0| < \frac{C_R}{L_R} d_{SR}(x,y).
\end{equation}
Then there exists $N_0>5$ such that
\begin{equation}\label{eq:convexestimate}
\beta_t(x,y) \geq t^{N_0},\qquad \forall t\in [0,1].
\end{equation}
\end{cor}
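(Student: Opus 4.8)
The plan is to apply Theorem~\ref{t:accadued} in the three-dimensional case $d=1$ (so $2d+1=3$) with the choice $\kappa_b=0$; since $d=1$ the hypothesis on the second level is vacuous, so the only thing to check is that $\mathfrak{Ric}^{N,b}_{\mis,\gamma}(t)\ge 0$ for all $t\in[0,1]$ for some $N>3$. Recall that on $\mathbb{H}$ the Tanaka--Webster curvature vanishes, $\mathrm{R}(\ganf,J\ganf,J\ganf,\ganf)=0$, so $\mathfrak{Ric}^{N,b}_{\mis,\gamma}(t)=h_0^2+\mathfrak{Z}(t)$ as in \eqref{eq:zetab}. Once the sign condition is secured, the right-hand side of \eqref{eq:137} simplifies: the factor with exponent $(2d-2)/(2d)$ drops out, and, since $\theta=\sqrt{\kappa_b}/2=0$, the remaining factor $\big(\tfrac{\sin t\theta}{\sin\theta}\tfrac{t\theta\cos t\theta-\sin t\theta}{\theta\cos\theta-\sin\theta}\big)^{1/(2d)}$ becomes $(t^4)^{1/2}=t^2$ (cf.\ Section~\ref{s:subcase1}). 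Hence $\beta_t(x,y)^{1/(N-1)}\ge t^{1/(N-1)}t^2$, i.e.\ $\beta_t(x,y)\ge t^{2N-1}$, and we take $N_0:=2N-1>5$.

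To produce such an $N$, I would invoke Lemma~\ref{l:exempio}: since $x,y\in\mathcal{B}_R(0)$, estimate \eqref{eq:exempio} gives, for all $t\in[0,1]$,
\begin{equation}
\mathfrak{Ric}^{N,b}_{\mis,\gamma}(t)\ge \Big[\tfrac12\Big(C_R-\tfrac{|h_0|}{\|\dot\gamma\|}L_R\Big)-\tfrac{3}{4(N-3)}L_R^2\Big]\|\dot\gamma\|^2.
\end{equation}
As $\gamma$ is a length-minimizing geodesic parametrized on $[0,1]$, its speed is constant, $\|\dot\gamma\|=d_{SR}(x,y)$, so $\tfrac{|h_0|}{\|\dot\gamma\|}L_R=\tfrac{|h_0|L_R}{d_{SR}(x,y)}$; the hypothesis \eqref{eq:farfalla} together with \eqref{eq:hconv} (and with $L_R>0$, the case $L_R=0$ being immediate because the last bracketed term then vanishes for every $N$) says exactly that $\delta:=\tfrac12\big(C_R-\tfrac{|h_0|}{\|\dot\gamma\|}L_R\big)>0$. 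Since $\delta,C_R,L_R,|h_0|,d_{SR}(x,y)$ do not depend on $t$, it suffices to pick $N>3$ with $\tfrac{3L_R^2}{4(N-3)}\le\delta$ --- possible because the left side tends to $0$ as $N\to\infty$ --- and then $\mathfrak{Ric}^{N,b}_{\mis,\gamma}(t)\ge 0$ on $[0,1]$.

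Combining the two steps, Theorem~\ref{t:accadued} applies with this $N$ and $\kappa_b=0$ and yields $\beta_t(x,y)\ge t^{2N-1}=t^{N_0}$ with $N_0=2N-1>5$, which is \eqref{eq:convexestimate}. There is no substantial obstacle: the analytic work is all contained in Theorem~\ref{t:accadued} and Lemma~\ref{l:exempio}. The one point that needs care is that the curvature bound must hold \emph{uniformly in $t\in[0,1]$}, so that a single $N$ (hence a single exponent $N_0$) suffices; this holds because the quantities $C_R,L_R,|h_0|,d_{SR}(x,y)$ controlling \eqref{eq:exempio} depend only on $R$, on the fixed endpoints $x,y$, and on the geodesic's Reeb constant $h_0$, not on the running point $\gamma(t)$.
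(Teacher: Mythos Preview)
Your proof is correct and follows the same approach as the paper: use Lemma~\ref{l:exempio} together with \eqref{eq:farfalla} to make the bracket in \eqref{eq:exempio} non-negative for $N$ large enough, then apply Theorem~\ref{t:accadued} with $\kappa_b=0$. You in fact give more detail than the paper does, correctly simplifying the right-hand side of \eqref{eq:137} to $t^{1/(N-1)}t^2$ and identifying $N_0=2N-1>5$.
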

\begin{proof}
Under condition \eqref{eq:farfalla}, and since $d_{SR}(x,y) =\|\dot\gamma\|$, we have that the first term in the lower bound \eqref{eq:exempio} is strictly positive. We can then ensure, by choosing $N$ sufficiently large, that the whole right hand side of \eqref{eq:exempio} is non-negative. In particular this means that if $N\geq N_0$ with
\begin{equation}
N_0 = 5 +\frac{3}{2}\frac{L_R^2}{C_R - |h_0|L_R/\|\dot\gamma\|},
\end{equation}
then $\mathfrak{Ric}^{N,b}_{\mis,\gamma}(t)  \geq 0$. We then conclude easily by Theorem \ref{t:accadued}.
\end{proof}
We provide some comments on Corollary \ref{c:convex}.
\begin{rmk}
Notice that \eqref{eq:convexestimate} is coherent with the well-known fact that, if $\beta_t(x,y)\geq t^\alpha$ for some $\alpha>0$, then $\alpha$ is greater than the geodesic dimension of the Heisenberg group, that is $\alpha \geq 5$, see \cite[Sec.\ 8.2]{BR-G1}.
\end{rmk}
\begin{rmk}
 We recall that the parameter $h_0$ controls the spiraling of the geodesic. Furthermore, it is well-known that if $(x,y)\notin \cut(\mathbb{H})$, then for the corresponding geodesic we have $|h_0|< 2\pi$.
\end{rmk}
\begin{rmk} 
Any smooth function which is $C$-convex with respect to the Euclidean metric satisfies \eqref{eq:hconv}. The same holds for any horizontally $C$-convex function in the sense of \cite{DGN-Convex,Balogh-Convex}. In this case one can choose $C=C_R$ in \eqref{eq:hconv} for all $R>0$. The value of $N_0$ will still depend on $x,y$ through $d_{SR}(x,y)$, $h_0$ and $R$. For example, one can choose 
$\psi : \mathbb{H} \to \R$
\begin{equation}
\psi(x)=\frac{1}{2}(x_{1}^{2}+x_{2}^{2}),
\end{equation}
which is horizontally $1$-convex and with cylindrical symmetry. In this case we can choose $C_R=1$ for any $R>0$ in Corollary \ref{c:convex}. Furthermore
\begin{equation}
\|\nabla_H \psi\|^{2}=x_{1}^{2}+x_{2}^{2},\qquad L_R=\sup_{\mathcal{B}_R(0)} \|\nabla_H \psi\|= R.
\end{equation}
When either $x$ or $y$ go to infinity (i.e.\ $R\to +\infty$) then the corresponding $N_0$ is not bounded.   
This is the sub-Riemannian analogue of the well-known fact that $\R^{n}$ endowed with the Gaussian measure does not satisfy any global $\mathrm{CD}(0,N)$ condition for finite $N$.
\end{rmk}
\begin{rmk}
In the spirit of Theorem \ref{t:srbe} one might require separately a lower bound on curvature (which in this example is zero), and an upper bound on the geodesic volume derivative. Let then $\gamma$ be a geodesic joining $x$ with $y$, and assume that $\gamma$ is contained $\mathcal{B}_R(0)$. We have in this case
\begin{equation}\label{eq:ruspa}
\rho_{\mis,\gamma}(t) \leq d_{SR}(x,y)\sup_{\mathcal{B}_R(0)}\|\nabla_H \psi\| = d_{SR}(x,y) L_R.
\end{equation}
By Remark \ref{r:cgeqleq} we obtain that for any $x,y\in\mathcal{B}_R(0)$ with $(x,y)\notin \cut(\mathbb{H})$ we have
\begin{equation}\label{eq:stimaruspa}
\beta_t(x,y) \geq t^5 e^{d_{SR}(x,y) L_R  (t-1) }, \qquad \forall t\in [0,1].
\end{equation}
Estimate \eqref{eq:stimaruspa} can be stronger or weaker than the one provided by Corollary~\ref{c:convex}, depending on the values of the parameters. This is similar to what happens already in the weighted Euclidean case.
\end{rmk}

\subsection{3-Sasakian manifolds}

We use the notation and conventions of \cite[Sec.\ 5]{RS17}, to which we refer to for more details. A \emph{$3$-Sasakian structure} on a smooth manifold $M$ of dimension $4d+3$, with $d\geq 1$, is a collection $\{\phi_\alpha,\eta_\alpha,\xi_\alpha,g\}_\alpha$, with $\alpha=I,J,K$, of three contact metric structures, where $g$ is a Riemannian metric, $\eta_\alpha$ is a one-form, $\xi_\alpha$ is the Reeb vector field and $\phi_\alpha : \Gamma(TM) \to \Gamma(TM)$ satisfy
\begin{equation}
2g(X,\phi_\alpha Y) = d\eta(X,Y), \qquad \qquad \forall X,Y \in TM.
\end{equation}
The three structures are Sasakian, and $\phi_I,\phi_J,\phi_K$ satisfy quaternionic-like compatibility relations. A natural sub-Riemannian structure is given by the restriction of the Riemannian metric $g$ to the distribution
\begin{equation}
\distr = \bigcap_{\alpha=I,J,K} \ker \eta_\alpha.
\end{equation}
The three Reeb vector fields $\xi_\alpha$ are an orthonormal triple, orthogonal to $\distr$. We denote by $\vol_{g}$ the corresponding Riemannian measure, which is proportional to the canonical Popp measure of th sub-Riemannian structure. For $3$-Sasakian structures we adopt as a reference connection the Levi-Civita connection $\nabla$ of $g$.
 
\subsubsection{Young diagram and curvature}

A horizontal curve $\gamma$ is a geodesic if and only if there exists three constants $v_I,v_J,v_K \in \R$ such that (cf.\ \cite[Lemma 37]{RS17})
\begin{equation}\label{eq:geod3sas}
\nabla_{\ganf}\ganf=\sum_{\alpha=I,J,K}v_{\alpha}\phi_{\alpha}\ganf.
\end{equation} 
In the following, we let $\|v\|^{2}:=\sum_{\alpha=I,J,K}v_{\alpha}^{2}$.

Any non-trivial geodesic has the same Young diagram, with two columns and $3$ superboxes. We label them according to the convention of Figure \ref{f:YdMCP} in the Introduction, with $n=4d+3$ and $k=4d$, and we label accordingly the corresponding Ricci curvatures. We are now ready to prove Theorem \ref{t:3Sas}, stated in the Introduction.

\subsubsection{Proof of Theorem \ref{t:3Sas}}

We will apply Theorem \ref{t:mcp}. Thanks to the homogeneity property of the sub-Riemannian Ricci curvature (see Appendix \ref{s:prel}), it is sufficient to check the assumptions for unit-speed geodesics. The sub-Riemannian Ricci curvatures of a $3$-Sasakian structure have been computed in \cite[Thm.\ 8]{RS17}. In particular for every unit-speed geodesic $\gamma$ it holds $\rho_{\mis,\g} =0$ and
\begin{align}
\mathfrak{Ric}^a_{\gamma}(t) & =  3\left(\tfrac{3}{4}\varrho(v)-\tfrac{7}{2}\|v\|^2-\tfrac{15}{8}\|v\|^4\right),\\
\mathfrak{Ric}^b_{\gamma}(t)& = 3(4 +5\|v\|^2), \\
\mathfrak{Ric}^c_{\gamma}(t) &  =(4d-4)(1+ \|v\|^2),
\end{align}
where $v = (v_I,v_J,v_K)$ are as in \eqref{eq:geod3sas}. In the above formulas $\varrho(v)$ is a sectional-like curvature invariant, given by
\begin{equation}
\varrho(v) := \sum_{\alpha=I,J,K} R_g(\dot\gamma,Z_\alpha,Z_\alpha,\dot\gamma),
\end{equation}
where $R_g$ is the (Levi-Civita) Riemannian curvature of the $3$-Sasakian structure, and the vectors $Z_I,Z_J,Z_K \in \distr$ are
\begin{equation*}
Z_I := (v_J \phi_K  - v_K \phi_J) \dot\gamma, \qquad Z_J := (v_K \phi_I  - v_I \phi_K) \dot\gamma, \qquad Z_K := (v_I \phi_J  - v_J \phi_I) \dot\gamma.
\end{equation*}
Assume now \eqref{eq:boundrhoa}, thus $\varrho(v) \geq \sum_{\alpha} K \|Z_\alpha\|^2 = 2 K \|v\|^2$. Thus
\begin{align}
\tfrac{1}{3}\mathfrak{Ric}^a_{\gamma(t)} & \geq  \k_a(v) := \|v\|^2\left(\tfrac{3}{2}K-\tfrac{7}{2}-\tfrac{15}{8}\|v\|^2\right),\\
\tfrac{1}{3}\mathfrak{Ric}^b_{\gamma(t)} & \geq  \k_b(v) := ( 4+5\|v\|^2),\\
\mathfrak{Ric}^c_{\gamma(t)} & \geq  \k_c(v) := 0.
\end{align}
for any unit-speed geodesic $\gamma$. Conditions \eqref{eq:conditions2} of Theorem \ref{t:mcp} are equivalent to
\begin{equation}
	\frac{35}{2}\|v\|^4+(26+6K)\|v\|^2+16 \geq 0,
\end{equation}
which is verified independently on $v$ provided that $K\geq -\tfrac{1}{3}(13+ 2\sqrt{70}) \simeq -9.91$. \hfill \qedsymbol

\appendix

\section{Sub-Riemannian curvature and canonical moving frames} \label{s:prel}

We assume the reader to be familiar with the basic definitions of sub-Riemannian geometry. We refer to \cite[Sec.\ 2]{BR-G1} for a minimal background, and to \cite{nostrolibro} for a comprehensive introduction. The material presented in this appendix has been developed in \cite{curvature,BR-comparison,BR-connection}, following the pioneering works of Agrachev-Zelenko \cite{agzel1,agzel2} and Zelenko-Li \cite{lizel}.

\subsection{Notation} In what follows $M$ is a smooth, connected $n$-dimensional manifold (where $n\geq 3$), equipped with a bracket-generating distribution $\distr$ of rank $k$. The distribution is endowed with an inner product, defining the sub-Riemannian distance $d_{SR}$. The Hamiltonian associated with the sub-Riemannian structure is denoted by $H$, and $\vec{H}$ denotes the corresponding Hamiltonian vector field.

A geodesic is a horizontal curve parametrized with constant speed whose short arcs realize the sub-Riemannian distance. A geodesic is normal if there exists a lift $\lambda:[0,T]\to T^{*}M$ such that $\lambda(t)=e^{t\vec H}(\lam_0)$ for some $\lambda_0\in T^{*}_{x}M$ and $\gamma(t) =\pi(\lambda(t))$. The lift $\lambda$ is called normal extremal.

 Recall that $\cut(x)$ is the complement of the set of points where $d^2_{SR}(x,\cdot)$ is smooth. It is closed and nowhere dense in $M$ \cite{Agrasmoothness,RT-MorseSard}. Let $\cut(M) = \{(x,y)\mid x\in M,\; y\in \cut(x)\}$ (see also \cite[Def.\ 18]{BR-G1}).
 
\subsection{Ample and equiregular curves}\label{s:gfyd}

Let $\gamma$ be a smooth horizontal curve, and consider a smooth admissible extension of the tangent vector, namely a horizontal vector field $\tanf$ such that $\tanf|_{\gamma(t)} = \dot{\gamma}(t)$.
The \emph{flag} of $\gamma$ is the sequence of subspaces (cf.~Definition~\ref{d:flag})
\begin{equation}
\DD_{\gamma(t)}^i :=  \spn\{\mc{L}_\tanf^j (X)|_{\gamma(t)} \mid  X \in \Gamma(\distr),\, j \leq i-1\} \subseteq T_{\gamma(t)} M, \qquad \forall\, i \geq 1,
\end{equation}
where $\mc{L}_{\tanf}$ denotes the Lie derivative in the direction of $\tanf$. The definition is well posed, namely it does not depend on the choice of the admissible extension $\tanf$ (see \cite[Sec. 3.4]{curvature}). Observe that $\DD_{\gamma(t)}^i \subseteq \DD_{\gamma(t)}^{i+1}$ for all $i \geq 1$, and $\DD_{\gamma(t)}^1 = \distr_{\gamma(t)}$.

The \emph{growth vector} of $\gamma$ is the sequence of integer numbers 
\begin{equation}
\mathcal{G}_{\gamma(t)} := \{\dim \DD_{\gamma(t)}^1,\dim \DD_{\gamma(t)}^2,\ldots\}.
\end{equation}
We say that the smooth horizontal curve $\gamma$ is
\begin{itemize}
\item[(a)] \emph{equiregular} if $\dim \DD_{\gamma(t)}^i$ does not depend on $t$ for all $i \geq 1$,
\item[(b)] \emph{ample} if for all $t$ there exists $m \geq 1$ such that $\dim \DD_{\gamma(t)}^{m} = \dim T_{\gamma(t)}M$.
\end{itemize}
Assume from now on that $\gamma$ is ample and equiregular. The smallest integer $m \geq 1$ such that $\dim \DD_{\gamma(t)}^{m} = \dim T_{\gamma(t)}M$ is called \emph{step} of $\gamma$. Let
\begin{equation}
d_i =  \dim \DD_{\gamma(t)}^i -\dim \DD_{\gamma(t)}^{i-1}, \qquad i \geq 1,
\end{equation}
with the convention that $\dim\DD_{\gamma(t)}^0 =0$. It is easy to show that $d_1 \geq d_2 \geq \ldots \geq d_m$, cf.\ \cite[Lemma 3.5]{curvature}. 

\subsection{Young diagrams}\label{s:young}
To any ample and equiregular curve, we associate a Young tableau $\y$, with $m$ columns of length $d_{i}$, for $i=1,\ldots,m$, as follows:
\begin{figure}[!ht]
\centering
\includegraphics[scale=1]{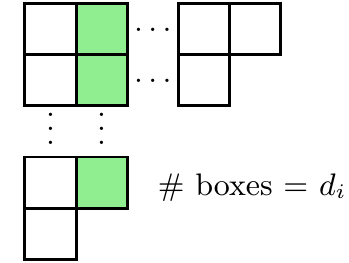}
\end{figure}

The total number of boxes in $\y$ is equal to the dimension of the manifold $\sum_{i=1}^m d_i = n$. The diagram $\y$ is a way to encode the data of the growth vector of $\gamma$.

Let $n_1,\ldots,n_k$ be the lengths of the rows, where $k = \rank \distr$. We employ the notation $ai \in \y$ to denote the generic box of the diagram, where $a=1,\ldots,k$ is the row index, and $i=1,\ldots,n_a$ is the progressive box number, starting from the left, in the specified row.

\begin{figure}[!ht]
\centering
\includegraphics[scale=1]{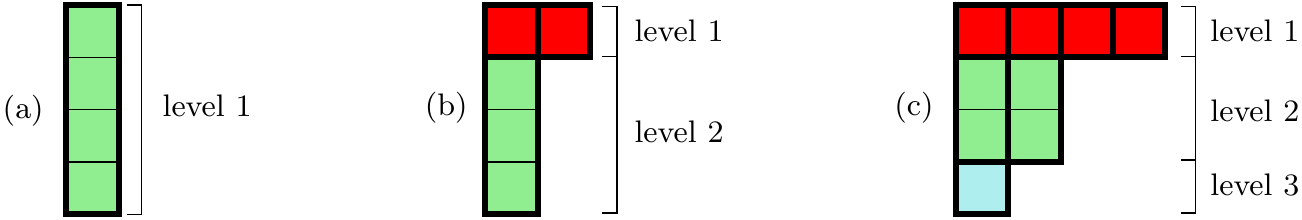}
\caption{Levels (shaded regions) and superboxes (delimited by bold lines) for some Young diagrams}\label{f:Yd2}
\end{figure}

We collect rows with the same length in $\y$, and we call them \emph{levels}.  If a level is the union of $r$ rows $\y_1,\ldots,\y_r$, then $r$ is called the \emph{size} of the level. The set of all the boxes $ai \in\y$ that belong to the same column and the same level of $\y$ is called \emph{superbox}. Notice that that two boxes $ai$, $bj$ are in the same superbox if and only if $ai$ and $bj$ are in the same column of $\y$ and in possibly distinct row but with same length, i.e., if and only if $i=j$ and $n_a = n_b$ (see Fig.~\ref{f:Yd2}). The Greek letters $\alpha,\beta$ are used to denote the generic superbox of the Young diagram. Sometimes, with an abuse of notation that should not cause confusion, we use $\lev$ to denote the generic level of the Young diagram, and if $\ell = \ell_\lev$ is its length, the superboxes belonging to that level are denoted $\lev_1,\dots,\lev_\ell$.

\subsection{Normal form matrices}\label{s:matrices}
Given a Young diagram $\y$, we define the two associated $n\times n$ \emph{matrices} $\Gamma_1=\Gamma_1(\y)$ and $\Gamma_2=\Gamma_2(\y)$ as follows.
 For $a,b = 1,\dots,k$, $i=1,\dots,n_a$, $j=1,\dots,n_b$:
\begin{equation}
[\Gamma_1]_{ai,bj} := \delta_{ab}\delta_{i,j-1}, \qquad
[\Gamma_2]_{ai,bj} := \delta_{ab}\delta_{i1}\delta_{j1}.
\end{equation}
It is convenient to regard $\Gamma_1$ and $\Gamma_2$ as block diagonal matrices:
\begin{equation}
\Gamma_i(\y) := \begin{pmatrix} 
\Gamma_i(\y_1) &  &    \\
 &  \ddots & \\
 &   & \Gamma_i(\y_k)
\end{pmatrix}, \qquad i =1,2,
\end{equation}
where $\y_a$, for $a=1,\dots,k$ denotes the $a$-th row of $\y$. Thus the $a$-th block in the above formula corresponds to the $n_a\times n_a$ matrices
\begin{equation}\label{eq:Gamma}
\Gamma_1(\y_a) := \begin{pmatrix}
0 & \mathbbold{1}_{n_a-1} \\
0 & 0
\end{pmatrix} , 
\qquad \Gamma_2(\y_a) := \begin{pmatrix}
1 & 0 \\
0 & \mathbbold{0}_{n_a-1}
\end{pmatrix},
\end{equation}
where $\mathbbold{1}_{m}$ is the $m \times m$ identity matrix and $\mathbbold{0}_{m}$ is the $m \times m$ zero matrix. Notice that the matrices $A=\Gamma_{1}^*(\y)$ and $B=\Gamma_{2}(\y)$ satisfy the Kalman rank condition
\begin{equation}\label{eq:Kalman}
\rank\{B, AB, \dots, A^{n-1} B\}=n.
\end{equation}
Analogously, the matrices $A_a=\Gamma_{1}^*(\y_{a})$, $B_a = \Gamma_2(\y_a)$ satisfy \eqref{eq:Kalman} with $n=n_{a}$.

\subsection{Sub-Riemannian Jacobi fields}

Let $\lambda(t)= e^{t\vec{H}}(\lambda_0)$, $t \in [0,1]$ be an integral curve of the Hamiltonian flow. For any smooth vector field $\xi(t)$ along $\lambda(t)$, the dot denotes the Lie derivative in the direction of $\vec{H}$, namely
\begin{equation}
\dot{\xi}(t) := \left.\frac{d}{d\eps}\right|_{\eps=0} e^{-\eps \vec{H}}_* \xi(t+\eps).
\end{equation}
A vector field $\J(t)$ along $\lambda(t)$ is a \emph{Jacobi field} if it satisfies the equation
\begin{equation}\label{eq:defJF}
\dot{\J} = 0.
\end{equation}
Jacobi fields along $\lambda(t)$ are of the form $\J(t) = e^{t\vec{H}}_* \J(0)$, for some unique initial condition $\J(0) \in T_{\lambda_0} (T^*M)$, and the space of solutions of \eqref{eq:defJF} is a $2n$-dimensional vector space. We define the smooth sub-bundle $\ver\subset T^*M$ with Lagrangian fibers:
\begin{equation}
\ver_{\lambda} := \ker \pi_*|_{\lambda} = T_\lambda(T^*_{\pi(\lambda)} M) \subset T_{\lambda}(T^*M), \qquad \lambda \in T^*M,
\end{equation}
which we call \emph{vertical sub-bundle}.

Let $\gamma :[0,1] \to M$ be a normal geodesic, projection of $\lambda(t) = e^{t\vec{H}}(\lambda_0)$, for some $\lambda_0 \in T^*M$. Consider the family of $n$-dimensional subspaces generated by a set of independent Jacobi fields $\J_1(t),\dots,\J_n(t)$ along $\lambda(t)$, that is
\begin{equation}
\mathcal{L}_t=\spn\{\J_1(t),\dots,\J_n(t)\} \subset T_{\lambda(t)}(T^*M).
\end{equation}
Since $\mathcal{L}_t = e^{t\vec{H}}_* \mathcal{L}_0$, then $\mathcal{L}_t$ is Lagrangian if and only if it is Lagrangian at time $t=0$.

Let $\sigma$ be the symplectic structure of $T^*M$. Fix a Darboux moving frame along $\lambda(t)$, that is smooth vector fields $E_i(t),F_j(t) \in T_{\lambda(t)} (T^*M)$, $i,j=1,\dots,n$, such that
\begin{equation}
\sigma(E_i,F_j) - \delta_{ij} = \sigma(E_i,E_j) = \sigma(F_i,F_j) = 0, \qquad \forall i,j=1,\ldots,n,
\end{equation}
and such that $E_1(t),\ldots,E_n(t)$ generate the vertical subspace $\mathcal{V}_{\lambda(t)}=\ker \pi_{*}|_{\lambda(t)}$:
\begin{equation}
\mathcal{V}_{\lambda(t)}= \spn\{E_1(t),\dots,E_n(t)\}, \qquad \forall t \in [0,1].
\end{equation}
We denote with $X_i(t):=\pi_* F_i(t)$, for $i=1,\dots,n$, the corresponding moving frame along the normal geodesic $\gamma(t) = \pi(\lambda(t))$, $t\in [0,1]$.
\begin{definitionappendix}
We say that $\{E_i(t),F_i(t)\}_{i=1}^{n}$ is a \emph{moving Darboux frame} along the extremal $\lambda(t)$, and that $X_1(t),\dots,X_n(t)$ is the corresponding \emph{moving frame} along the geodesic $\gamma(t)$.
\end{definitionappendix}

We identify $\mathcal{L}_t = \spn\{\J_1(t),\dots,\J_n(t)\}$ with a smooth family of $2n \times n$ matrices
\begin{equation}\label{eq:jacmatrix}
\mathbf{J}(t) = \begin{pmatrix}
M(t) \\
N(t)
\end{pmatrix}, \qquad t \in [0,1],
\end{equation}
such that, with respect to the given Darboux frame, we have
\begin{equation}\label{eq:howtowriteJ}
\J_{i}(t) = \sum_{j=1}^n E_{j}(t)  M_{ji}(t) +  F_{j}(t)N_{ji}(t), \qquad \forall i=1,\dots,n.
\end{equation}
We call $\mathbf{J}(t)$ a \emph{Jacobi matrix}, while the $n\times n$ matrices $M(t)$ and $N(t)$ represent respectively its ``vertical'' and ``horizontal'' components with respect to the decomposition induced by the Darboux moving frame
\begin{equation}
T_{\lambda(t)}(T^*M) = \mathcal{H}_{\lambda(t)} \oplus \mathcal{V}_{\lambda(t)}, \qquad \text{with} \qquad \mathcal{H}_{\lambda(t)} := \spn\{F_1(t),\dots,F_n(t)\}.
\end{equation}

Jacobi matrices are solutions of a general Hamiltonian system or, equivalently, a Riccati-type matrix equation. The precise statement is as follows. Its proof follows directly form the properties of $H$, see for example \cite[Lemma 24]{BR-G1}.

\begin{lemmaappendix}\label{l:subspacesriccati}
For any Darboux frame along $\lambda(t)$ there exist smooth families of $n\times n$ matrices $A(t),B(t),R(t)$, $t \in[0,1]$, with $B(t),R(t)$ symmetric and $B(t) \geq 0$, such that any Jacobi matrix $\mathbf{J}(t)$ is a solution of
\begin{equation}\label{eq:mainequation}
\frac{d}{dt} \begin{pmatrix}
M \\
N
\end{pmatrix} = \begin{pmatrix}
-A(t)^* & - R(t) \\
B(t) & A(t)
\end{pmatrix} \begin{pmatrix}
M \\
N
\end{pmatrix}.
\end{equation}
On any interval $I \subseteq [0,1]$ such that $N(t)$ is non-degenerate, the matrix $V(t):= M(t)N(t)^{-1}$ satisfies the Riccati equation
\begin{equation}
\dot{V} + A(t)^* V + VA(t) + V B(t) V + R(t) = \mathbbold{0}, \qquad
\end{equation}
The associated family of subspaces $\mathcal{L}_t$ is Lagrangian if and only if $V(t)$ is symmetric.
\end{lemmaappendix}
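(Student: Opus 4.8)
The plan is to expand the Jacobi equation $\dot\J=0$ in the given moving Darboux frame and to read off the structure of the resulting linear system from the $\vec H$‑invariance of the symplectic form $\sigma$. First I would note that, since $E_1(t),\dots,E_n(t),F_1(t),\dots,F_n(t)$ are smooth sections along $\lambda(t)$, so are $\dot E_i(t)$ and $\dot F_i(t)$, hence expanding them in the frame produces four smooth $n\times n$ matrix‑valued functions. Substituting the representation \eqref{eq:howtowriteJ} of a Jacobi matrix into $\dot\J=0$ and separating the coefficients of the $E_j$ and of the $F_j$ then yields a linear ODE $\frac{d}{dt}\begin{pmatrix} M\\ N\end{pmatrix}=\mathcal C(t)\begin{pmatrix} M\\ N\end{pmatrix}$, where $\mathcal C(t)$ is assembled from those four blocks. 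This already gives \eqref{eq:mainequation} for some smooth $A(t),B(t),R(t)$ and an a priori unrelated lower‑right block; the content of the lemma is to pin down the symmetry and positivity constraints.

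The second step is to differentiate along $\vec H$ the defining relations of the Darboux frame, $\sigma(E_i,F_j)=\delta_{ij}$ and $\sigma(E_i,E_j)=\sigma(F_i,F_j)=0$. Since $\mc L_{\vec H}\sigma=0$, each derivative vanishes, and the resulting identities among the four blocks are precisely the statement that $\mathcal C(t)\in\mathfrak{sp}(2n,\R)$ for every $t$ (with the chosen ordering of the frame, $\sigma$ is represented by $\begin{pmatrix}\mathbbold 0 & \mathbbold 1\\ -\mathbbold 1 & \mathbbold 0\end{pmatrix}$). Elementary symplectic linear algebra then forces $\mathcal C(t)$ into the block form of \eqref{eq:mainequation}, with $B(t)=B(t)^*$, $R(t)=R(t)^*$, and lower‑right block equal to $A(t)$.

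The main obstacle — the one point that is not formal — is the sign condition $B(t)\geq 0$. Here I would use that the frame is adapted to the vertical bundle, i.e.\ $\ver_{\lambda(t)}=\spn\{E_1(t),\dots,E_n(t)\}$, together with the fact that the sub‑Riemannian Hamiltonian is a fibrewise nonnegative quadratic form: in these terms $B(t)$ is the matrix of the fibre Hessian $\partial_p^2H$ read in the frame $F_1(t),\dots,F_n(t)$, hence positive semidefinite (in fact of rank $k=\rank\distr$). This computation I would carry out exactly as in \cite[Lemma 24]{BR-G1} (see also \cite{curvature}); it is really the only substantive input.

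Finally, the Riccati equation follows by differentiating $V=MN^{-1}$ on an interval where $N(t)$ is invertible and substituting $\dot M=-A^*M-RN$, $\dot N=BM+AN$, which gives $\dot V=-A^*V-VA-VBV-R$. For the Lagrangian criterion, the expansion \eqref{eq:howtowriteJ} shows $\sigma(\J_i,\J_j)=(M^*N-N^*M)_{ij}$, and this matrix is $t$‑independent because the $\J_i$ are Jacobi fields and $\sigma$ is $\vec H$‑invariant; hence $\mathcal L_t$ is Lagrangian for one (equivalently, for every) $t$ if and only if $M^*N=N^*M$, and where $N$ is invertible this is the same as the symmetry of $V=MN^{-1}$, completing the proof.
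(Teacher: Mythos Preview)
Your proof is correct and follows exactly the approach the paper has in mind: the paper does not spell out the argument but simply defers it to \cite[Lemma 24]{BR-G1}, and your sketch is precisely that standard proof --- expand $\dot{\J}=0$ in the moving Darboux frame, use $\mathcal{L}_{\vec H}\sigma=0$ to force the symplectic block structure on the coefficient matrix, identify $B(t)$ with the fibrewise Hessian of $H$ to get $B(t)\geq 0$, and derive the Riccati equation and the Lagrangian criterion by direct computation. There is nothing to add.
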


\subsection{Canonical Darboux frame}\label{a:canframe}

There exists a canonical choice of moving Darboux frames, in terms of which the Hamiltonian system of Lemma \ref{l:subspacesriccati} takes a simple normal form. This frame is uniquely defined up to constant orthogonal transformations that, roughly speaking, respect the structure of the Young diagram. The following theorem is the main result of \cite{lizel}.
\begin{theoremappendix}\label{p:can} 
Let $\gamma(t)$ be an ample and equiregular geodesic with Young diagram $\y$, and let $\lambda(t)$ be its normal extremal lift. Then, there exists a moving Darboux frame $\{E_{ai}(t),F_{ai}(t)\}_{ai \in \y}$ along $\lambda(t)$ such that the Hamiltonian system of Lemma \ref{l:subspacesriccati} takes a normal form, with
\begin{equation}
A(t) = \Gamma_1^*(\y), \qquad B(t) =\Gamma_2(\y),
\end{equation}
are the constant matrices defined in Section \ref{s:matrices}, and the symmetric matrix $R(t)$ is \emph{normal} in the sense of Zelenko-Li (see Definition~\ref{d:normal}).

If $\{\wt{E}_{ai},\wt{F}_{ai}\}_{ai \in \y}$ is another moving Darboux frame verifying the above properties, for some normal matrix $\wt{R}(t)$, then for any superbox $\alpha$ of size $r$ there exists an orthogonal constant $r\times r$ matrix $O^\alpha$ such that
\begin{equation}
\wt{E}_{ai} = \sum_{bj \in \alpha} O^\alpha_{ai,bj} E_{bj}, \qquad \wt{F}_{ai} = \sum_{bj \in \alpha} O^\alpha_{ai,bj} F_{bj}, \qquad \forall ai \in \alpha.
\end{equation}
\end{theoremappendix}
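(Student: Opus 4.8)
The plan is to deduce this from the corresponding statement about the \emph{Jacobi curve} attached to $\lambda(t)$, regarded as a curve in a Lagrange Grassmannian; this is the construction of Zelenko--Li in \cite{lizel}, whose steps I sketch. Set $\Sigma := T_{\lambda_0}(T^*M)$, a $2n$-dimensional symplectic vector space, and transport the vertical spaces backward along the flow, $J(t) := e^{-t\vec{H}}_{*}(\ver_{\lambda(t)}) \subset \Sigma$, a smooth curve of Lagrangian subspaces. Producing a moving Darboux frame $\{E_{ai}(t),F_{ai}(t)\}_{ai\in\y}$ along $\lambda(t)$ with $\ver_{\lambda(t)} = \spn\{E_{ai}(t)\}_{ai\in\y}$ is equivalent to producing, smoothly in $t$, a symplectic basis of $\Sigma$ whose $E$-part spans $J(t)$. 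The velocity $\dot J(t)$ is a well-defined quadratic form on $J(t)$; ampleness of $\gamma$ translates into the condition that the iterated derivatives of $J$ eventually span $\Sigma$, and equiregularity into the dimensions of the increasing osculating flag $J^{(0)}(t) \subset J^{(1)}(t) \subset \cdots$ (the spans of $J(t)$ together with its successive derivatives) and of the dual decreasing flag being constant in $t$; the jump dimensions of this double flag are exactly the row/column data of the Young diagram $\y$.

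\textbf{Construction of the frame.} Using the symplectic form together with $\dot J(t)$ one first builds Zelenko's \emph{derivative curve}: a canonical Lagrangian complement $J^{\circ}(t)$ to $J(t)$, singled out by a compatibility condition with the osculating flag. Fixing the splitting $\Sigma = J(t)\oplus J^{\circ}(t)$, one then constructs a frame $\{E_{ai}(t)\}$ of $J(t)$ adapted to the flag — the boxes $a1,\dots,ai$ of a row spanning the appropriate osculating subspace — and normalised so that differentiation acts, along each row of $\y$, as a one-step shift; the symplectic pairing with $J^{\circ}(t)$ then determines the dual frame $\{F_{ai}(t)\}$. The essential subtlety is that this must be carried out \emph{level by level}: rows of a common length are normalised simultaneously by a matrix Gram--Schmidt, and only after the longer rows are fixed can one descend to the shorter ones, since differentiation couples rows of different lengths. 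Writing the structural equations \eqref{eq:mainequation} in the resulting frame gives exactly $A(t) = \Gamma_1^{*}(\y)$ and $B(t) = \Gamma_2(\y)$, and shows that the remaining block $R(t)$ obeys the vanishing and symmetry relations defining normality in the sense of Zelenko--Li.

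\textbf{Uniqueness.} One tracks the residual freedom at each stage. The derivative curve $J^{\circ}(t)$ is canonical, hence carries no ambiguity. The adapted frame of $J(t)$ is determined, inside each superbox $\alpha$ of size $r$, only up to an $r\times r$ orthogonal transformation; requiring that the structural equations remain in normal form forces this transformation to be \emph{constant} in $t$, and requiring that it respect the osculating flag forces it to be block-diagonal with one orthogonal block per superbox. Conversely, any such constant block-orthogonal change of frame preserves all the defining properties, yielding the stated rigidity.

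\textbf{Main difficulty.} The crux is the construction of the frame: extracting the derivative curve canonically from the symplectic data of $J(\cdot)$, and then running the level-by-level Gram--Schmidt so that it simultaneously realises the shift structure on $J(t)$, the symplectic duality with $J^{\circ}(t)$, and the normality of $R(t)$. A general Young diagram with several levels of distinct lengths is what makes this delicate, since naive differentiation mixes the levels and one must verify that all the required orthogonality normalisations can be imposed at once; the rigidity statement additionally requires a non-degeneracy argument ruling out any $t$-dependent frame change that would preserve the normal form.
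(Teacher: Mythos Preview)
The paper does not give its own proof of this theorem: it is stated as ``the main result of \cite{lizel}'' and simply cited. Your proposal correctly sketches the Zelenko--Li construction (Jacobi curve in the Lagrange Grassmannian, derivative curve as canonical complement, level-by-level normalisation adapted to the Young diagram, and the rigidity argument for uniqueness), so your approach is exactly the one the paper defers to.
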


\begin{definitionappendix}
The frame $\{E_{ai}(t),F_{ai}(t)\}_{ai \in \y}$ of Theorem \ref{p:can} is called \emph{canonical moving Darboux frame} along $\lambda(t)$. The frame $\{X_{ai}(t)\}_{ai \in \y}$, defined by $X_{ai}(t) = \pi_* F_{ai}(t)$ is the corresponding \emph{canonical moving frame} along $\gamma(t)$. 
\end{definitionappendix}
It is not hard to check that, in the Riemannian case, canonical moving frames along $\gamma(t)$ are precisely the parallel and orthonormal ones (see for instance~\cite{BR-comparison}).

\begin{definitionappendix}\label{d:normal}
A $n\times n$ matrix $R$, whose entries are labelled according to the entries of a Young diagram $\y$, is \emph{normal} in the sense of Zelenko-Li if it satisfies:
\begin{itemize}
\item[(i)] global symmetry: for all $ai,bj\in \y$
\[R_{ai,bj}=R_{bj,ai}.\]
\item[(ii)] partial skew-symmetry: for all $ai,bi\in \y$ with $n_{a}=n_{b}$ and $i<n_{a}$ 
\[R_{ai,b(i+1)}=- R_{bi,a(i+1)}.\]
\item[(iii)] vanishing conditions: the only possibly non vanishing entries $R_{ai,bj}$ satisfy
\begin{itemize}
\item[(iii.a)] $n_{a}=n_{b}$ and $|i-j|\leq 1$,
\item[(iii.b)] $n_{a}>n_{b}$ and $(i,j)$ belong to the last $2n_{b}$ elements of Table~\ref{t:normaltable}.
\begin{table}[htbp]
\begin{center}
\caption{Vanishing conditions.}
\begin{tabular}{|c||c|c|c|c|c|c|c|c|c|c|c|c|c|}
\hline
$i$ & $1$ & $1$ & $2$ & $\cdots$ & $\ell$ & $\ell$ & $\ell+1$ & $\cdots$ & $n_{b}$ & $n_{b}+1$ & $\cdots$ & $n_a-1$ & $n_a$ \\
\hline
$j$ & $1$ & $2$ & $2$ & $\cdots$ & $\ell$ & $\ell+1$ & $\ell+1$ & $\cdots$ & $n_b$ & $n_b$ & $\cdots$ & $n_b$ & $n_b$ \\
\hline
\end{tabular}
\label{t:normaltable}
\end{center}
\end{table}
\end{itemize}
\end{itemize}
The sequence is obtained as follows: starting from $(i,j)=(1,1)$ (the first boxes of the rows $a$ and $b$), each next even pair is obtained from the previous one by increasing $j$ by one (keeping $i$ fixed). Each next odd pair is obtained from the previous one by increasing $i$ by one (keeping $j$ fixed). This stops when $j$ reaches its maximum, that is $(i,j) = (n_b,n_b)$. Then, each next pair is obtained from the previous one by increasing $i$ by one (keeping $j$ fixed), up to $(i,j) = (n_a,n_b)$. The total number of pairs appearing in the table is $n_b+n_a-1$.
\end{definitionappendix}

\subsection{Canonical structure} \label{s:cancurv}
Theorem \ref{p:can} defines several canonical objects along $\gamma(t)$, including the sub-Riemannian curvature. Let then $\{X_{ai}(t)\}_{ai \in \y}$ be a canonical moving frame along the ample and equiregular geodesic $\gamma(t)$. Such a frame is defined up to constant orthogonal transformations that mix only the $X_{ai}$'s belonging to the same superbox of $\y$. Thus, the following definitions are well posed for all $t$.
\begin{definitionappendix}\label{d:split}
The \emph{canonical splitting} of $T_{\gamma(t)} M$ is
\begin{equation}
T_{\gamma(t)}M = \bigoplus_{\alpha}S_{\gamma(t)}^{\alpha}, \qquad S_{\gamma(t)}^{\alpha}:=\spn\{ X_{ai}(t)\mid \, ai \in \alpha\},
\end{equation}
where the sum is over the superboxes $\alpha$ of $\y$. The dimension of $S_{\gamma(t)}^{\alpha}$ is equal to the size $r$ of the superbox $\alpha$, that is the number of boxes contained in $\alpha$.
\end{definitionappendix}

\begin{definitionappendix}
The \emph{canonical scalar product} is the positive quadratic form $\langle \cdot |\cdot \rangle_{\gamma(t)} : T_{\gamma(t)}M\times T_{\gamma(t)}M\to \R$ such that $\{X_{ai}(t)\}_{ai\in\y}$ is an orthonormal frame for $\langle \cdot |\cdot \rangle_{\gamma(t)}$.
\end{definitionappendix}
It is not difficult to show that the subset $\{X_{a1}\}_{a1\in \y}$ is an orthonormal frame for the sub-Rieman\-nian metric $g$, and thus $\langle \cdot |\cdot\rangle_{\gamma(t)}$ coincides with $g$ on $\distr_{\gamma(t)}$.

\begin{definitionappendix}
Let $\Pi_{\gamma(t)}$ be the orthogonal projection on $\distr_{\gamma(t)}$ with respect to $\langle \cdot |\cdot \rangle_{\gamma(t)}$. We define a non-negative quadratic form $\mathfrak{B}_{\gamma}(t):T_{\gamma(t)}M\times T_{\gamma(t)}M \to \R$ as
\[
\mathfrak{B}_{\gamma}(t)(v,w)=g(\Pi_{\gamma(t)}v,\Pi_{\gamma(t)}w), \qquad \forall v,w \in T_{\gamma(t)} M.
\]
\end{definitionappendix}

\begin{rmkappendix}\label{rmk:howtocomputeB}
The representative matrix of $\mathfrak{B}_{\gamma}(t)$, in terms of the basis $\{X_{ai}\}_{ai \in \y}$, is the matrix $B=\Gamma_2(\y)$ of Section \ref{s:matrices}. In particular, for any superbox $\alpha$ we have
\begin{equation}
\tr\left(\mathfrak{B}_{\gamma}(t)|_{S_{\gamma(t)}^{\alpha}}\right) = \begin{cases}
\mathrm{size}(\alpha) & \text{$\alpha$ is the first superbox of its level}, \\
0 & \text{otherwise}.
\end{cases}
\end{equation}
\end{rmkappendix}

\begin{definitionappendix}\label{d:curvature}
The \emph{canonical curvature} is the quadratic form  $\Rcan_{\g}(t): T_{\gamma(t)} M \times T_{\gamma(t)} M\to \R$ whose representative matrix, in terms of the basis $\{X_{ai}\}_{ai \in \y}$, is $R_{ai,bj}(t)$. In other words for all $v\in T_{\gamma(t)}M$ we have
\begin{equation}\label{eq:SR-dircurv}
\Rcan_{\g}(t)(v,v) := \sum_{ai,bj\in\y} R_{ai,bj}(t) v_{ai}v_{bj}, \qquad v = \sum_{ai\in\y} v_{ai} X_{ai}(t) \in T_{\gamma(t)}M.
\end{equation}
For any pair of superboxes $\alpha,\beta$, we denote the restrictions of $\mathfrak{R}_{\g}(t)$ on the appropriate subspaces by:
\begin{equation}
\mathfrak{R}^{\alpha\beta}_{\g}(t) : S^{\alpha}_{\gamma(t)} \times S^{\beta}_{\gamma(t)} \to \R.
\end{equation}
Finally, for any superbox $\alpha$, the \emph{canonical Ricci curvature} is the partial trace:
\begin{equation}
\mathfrak{Ric}_{\g}^\alpha(t):= \sum_{ai \in \alpha} \Rcan_{\g}^{\alpha\alpha}(t)(X_{ai}(t),X_{ai}(t)).
\end{equation}
\end{definitionappendix}
 \begin{rmkappendix}\label{rmk:howtocomputesrBE}
Let $N>n$ and fix a smooth measure $\mis$ on $M$. The sub-Rieman\-nian Bakry-Émery Ricci curvature $\Rcan_{\mis,\g}^{N,\lev}(t)$ was defined in \eqref{eq:srbakryemery} as the partial trace, over the subspace $S_{\gamma(t)}^\alpha$ associated with the superbox $\alpha$, of
\begin{equation}
\mathfrak{R}_{\mis,\gamma}^{N}(t)= 
\mathfrak{R}_{\gamma}(t)-\left(\frac{\dot \rho_{\mis,\g}(t)}{k}+\frac{n}{N-n}\frac{ \rho_{\mis,\g}^{2}(t)}{k^{2}}\right)\mathfrak{B}_{\gamma}(t),
\end{equation}
where $\rho_{\mis,\g}(t)$ is the geodesic volume derivative of Definition \ref{def:geodvolder}. Taking into account Remark \ref{rmk:howtocomputeB}, and letting $r_\alpha$  be the size of the superblock $\alpha$, we have
\begin{equation}
\mathfrak{Ric}_{\mis,\g}^{N,\alpha}(t)=
\mathfrak{Ric}_{\g}^\alpha(t) -r_\alpha\left(\frac{\dot \rho_{\mis,\g}(t)}{k}+\frac{n}{N-n} \frac{ \rho_{\mis,\g}^{2}(t)}{k^{2}}\right),
\end{equation}
if $\alpha$ is the first superblock of its level, and $\mathfrak{Ric}_{\mis,\g}^{N,\alpha}(t) = \mathfrak{Ric}_{\g}^\alpha(t)$ otherwise.
\end{rmkappendix}

\subsection{Homogeneity properties}

For all $c>0$, let $H_c := H^{-1}(c/2)$ be the Hamiltonian level set. In particular $H_1$ is the unit cotangent bundle: the set of initial covectors associated with unit-speed geodesics. Since the Hamiltonian function is fiber-wise quadratic, we have the following property for any $c>0$
\begin{equation}\label{eq:commutation}
e^{t \vec{H}}(c \lambda) = c e^{c t\vec{H}}(\lambda),
\end{equation}
where, for $\lambda \in T^*M$, the notation $c \lambda$ denotes the fiber-wise multiplication by $c$. 
%
The sub-Riemannian curvatures enjoy the following homogeneity property, proved in \cite[Thm.\ 4.7]{BR-connection}. The analogous property of the geodesic volume derivative follows from analogous properties of the canonical frame \cite[Prop.\ 4.9]{BR-connection}.
\begin{theoremappendix}\label{t:homogR}
Let $\gamma :[0,T] \to M$ an ample and equiregular geodesic, with normal extremal $\lambda : [0,T] \to T^*M$, and Young diagram $\y$. Let $c>0$ and consider the reparametrization $\gamma^c:[0,T/c] \to M$ defined by $\gamma^c(t) = \gamma(ct)$, which is again ample and equiregular, with the same Young diagram. The corresponding normal extremal is $\lambda^c :[0,T/c] \to T^*M$ with $\lambda^c(t) = c \lambda(ct)$. For any superbox $\alpha \in \y$, let $|\alpha|$ denote the column index of $\alpha$. Then, we have
\begin{equation}
\mathfrak{R}^{\alpha\beta}_{\gamma^{c}}(t) = c^{|\alpha|+ |\beta|} \mathfrak{R}^{\alpha\beta}_{\gamma}(ct).
\end{equation}
In particular, for the Ricci curvatures it holds
\begin{equation}
\mathfrak{Ric}^{\alpha}_{\gamma^c}(t) = c^{2|\alpha|} \mathfrak{Ric}^\alpha_{\gamma}(ct).
\end{equation}
Furthermore, for the geodesic volume derivative, it holds
\begin{equation}
\rho_{\mis,\gamma^c}(t)= c\rho_{\mis,\gamma}(ct).
\end{equation}
\end{theoremappendix}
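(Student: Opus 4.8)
The plan is to exploit the fibrewise dilation $\Phi_c\colon T^*M\to T^*M$, $\Phi_c(\lambda)=c\lambda$, together with the homogeneity identity \eqref{eq:commutation}. First I would record three elementary facts. From $e^{t\vec H}\circ\Phi_c=\Phi_c\circ e^{ct\vec H}$ one gets $\lambda^c(t)=\Phi_c(\lambda(ct))$ and, differentiating in $t$, that any vector field of the form $\xi(t)=(\Phi_c)_*\eta(ct)$ along $\lambda^c$ satisfies $\dot\xi(t)=c\,(\Phi_c)_*\dot\eta(ct)$ -- i.e.\ one extra factor $c$ for every Lie derivative along $\vec H$. Next, $\Phi_c$ multiplies the Liouville $1$-form, hence the symplectic form $\sigma$, by $c$, so $\sigma((\Phi_c)_*v,(\Phi_c)_*w)=c\,\sigma(v,w)$. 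Finally $\pi\circ\Phi_c=\pi$, so $(\Phi_c)_*$ preserves the vertical bundle $\ver$ and commutes with $\pi_*$.

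Next I would start from a canonical moving Darboux frame $\{E_{ai}(t),F_{ai}(t)\}_{ai\in\y}$ along $\lambda$ as in Theorem \ref{p:can}, with normal form matrices $A=\Gamma_1^*(\y)$, $B=\Gamma_2(\y)$ and normal curvature matrix $R(t)$; concretely it satisfies structure equations of the form $\dot E_{ai}=E_{a,i-1}-\delta_{i1}F_{a1}$ and $\dot F_{ai}=-F_{a,i+1}+\sum_{bj}R_{ai,bj}E_{bj}$ (conventions $E_{a,0}=0$, $F_{a,n_a+1}=0$). Then I would define a frame along $\lambda^c$ by
\[
E^c_{ai}(t):=c^{-i}(\Phi_c)_*E_{ai}(ct),\qquad F^c_{ai}(t):=c^{i-1}(\Phi_c)_*F_{ai}(ct).
\]
Using the second and third facts above, $\{E^c_{ai},F^c_{ai}\}$ is again a Darboux frame (the weight product $c^{-i}\cdot c^{i-1}\cdot c=1$ enforces the normalization) and the $E^c_{ai}$ span $\ver_{\lambda^c(t)}$. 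The core computation is to plug this ansatz into the structure equations above and use the first fact: a direct check shows that $\{E^c_{ai},F^c_{ai}\}$ satisfies the same normal-form equations, with the same $A$ and $B$ and with curvature matrix $R^c$ given by $R^c_{ai,bj}(t)=c^{i+j}R_{ai,bj}(ct)$. The exponents $-i$ and $i-1$ are precisely what makes the ``prolongation'' part of the structure equations (the column shifts $i\mapsto i-1$ for the $E$'s and $i\mapsto i+1$ for the $F$'s, and the coupling $\dot E_{a1}\sim -F_{a1}$) absorb the factor $c$ produced by reparametrizing. Since normality in the sense of Zelenko-Li (Definition \ref{d:normal}) concerns only the symmetry/skew-symmetry relations and the vanishing pattern of the entries -- and the pairs of entries linked by those relations always sit in the same pair of columns, hence are rescaled by the same power of $c$ -- the matrix $R^c(t)$ is again normal. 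By the uniqueness part of Theorem \ref{p:can}, $\{E^c_{ai},F^c_{ai}\}$ is therefore a canonical moving Darboux frame along $\lambda^c$, and $\{X^c_{ai}:=\pi_*F^c_{ai}\}=\{c^{i-1}X_{ai}(ct)\}$ is the corresponding canonical moving frame along $\gamma^c$.

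From here the three conclusions follow. The column index of the superbox $\alpha$ containing a box $ai$ is $|\alpha|=i$, so the entrywise identity $R^c_{ai,bj}(t)=c^{i+j}R_{ai,bj}(ct)$ is exactly the claimed $\mathfrak R^{\alpha\beta}_{\gamma^c}(t)=c^{|\alpha|+|\beta|}\mathfrak R^{\alpha\beta}_\gamma(ct)$, both sides being read off the respective canonical frames as in Definition \ref{d:curvature}; summing the diagonal entries over $ai\in\alpha$ gives $\mathfrak{Ric}^\alpha_{\gamma^c}(t)=c^{2|\alpha|}\mathfrak{Ric}^\alpha_\gamma(ct)$. For the geodesic volume derivative, using $X^c_{ai}(t)=c^{i-1}X_{ai}(ct)$ and multilinearity of $\mis$ one has $\mis_{\gamma^c(t)}(X^c_1(t),\dots,X^c_n(t))=\big(\prod_{ai\in\y}c^{i-1}\big)\,\mis_{\gamma(ct)}(X_1(ct),\dots,X_n(ct))$; the prefactor is a positive constant, so its logarithmic derivative vanishes and the chain rule yields $\rho_{\mis,\gamma^c}(t)=c\,\rho_{\mis,\gamma}(ct)$ (this is independent of the chosen canonical frame because $\rho_{\mis,\gamma}$ is, cf.\ Definition \ref{def:geodvolder}).

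The main obstacle is the structure-equation computation in the second paragraph: it rests on the explicit Zelenko-Li normal form and on the verification that the weights $c^{-i}$, $c^{i-1}$ make the equations homogeneous. This is the substance of the statement -- indeed it is \cite[Thm.\ 4.7]{BR-connection}, with \cite[Prop.\ 4.9]{BR-connection} handling the volume-derivative part -- whereas the Darboux check, the normality check, and the passages to the Ricci curvature and to $\rho$ are routine.
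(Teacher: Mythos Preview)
Your argument is correct. The paper itself does not give a self-contained proof of this statement: it simply records that the curvature homogeneity is \cite[Thm.\ 4.7]{BR-connection} and that the volume-derivative part follows from \cite[Prop.\ 4.9]{BR-connection}. What you have written is precisely a reconstruction of that argument --- defining the rescaled frame $E^c_{ai}=c^{-i}(\Phi_c)_*E_{ai}(ct)$, $F^c_{ai}=c^{i-1}(\Phi_c)_*F_{ai}(ct)$, checking it is again Darboux and satisfies the Zelenko--Li normal form with curvature $R^c_{ai,bj}(t)=c^{i+j}R_{ai,bj}(ct)$, and then invoking the uniqueness in Theorem~\ref{p:can}. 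Your verification that normality is preserved (the entries linked by condition~(ii) of Definition~\ref{d:normal} lie in columns $i$ and $i+1$ and are both scaled by $c^{2i+1}$) and the passage to $\rho_{\mis,\gamma^c}$ via $X^c_{ai}=c^{i-1}X_{ai}(ct)$ are the right details to spell out. In short: you have supplied the proof that the paper outsources, and you correctly identify the cited references as its source.
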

\begin{rmkappendix}
In the Riemannian setting, $\y$ has only one superbox with $|\al|=1$  (see Fig.~\ref{f:Yd2}). Then $\mathfrak{R}_{\gamma}(t):=\mathfrak{R}^{\alpha\alpha}_{\gamma}(t)$ is homogeneous of degree $2$ as a function of $\dot\gamma(t)$.
\end{rmkappendix}
\section{Matrix Riccati comparison} \label{s:riccati}

We consider the following non-autonomous matrix Riccati equation 
\begin{equation}
\dot{X} = \mathrm{R}(X;t):=M(t)_{11} +  X M(t)_{12} + M(t)_{12}^* X + X M(t)_{22} X, 
\end{equation}
where $M(t)$ is a smooth family of $2n\times 2n$ \emph{symmetric} matrices. If we couple the equation with a symmetric initial datum, then the solution must be symmetric as well on the maximal interval of definition. All the comparison results are based upon the following theorems. For a proof of these facts we refer to \cite[Appendix A]{BR-comparison}.
\begin{theoremappendix}[Riccati comparison 1]\label{t:riccati}
Let $M_1(t)$, $M_2(t)$ be two smooth families of $2n\times 2n$ symmetric matrices. Let $X_i(t)$ be smooth solutions of the Riccati equation
\begin{equation}
\dot{X}_i = \mathrm{R}_i(X_i;t), \qquad i =1,2,
\end{equation}
on a common interval $I \subseteq \R$. Let $t_0 \in I$ and assume \emph{(i)} $M_1(t) \geq M_2(t)$ for all $t \in I$, \emph{(ii)} $X_{1}(t_0) \geq X_{2}(t_0)$. Then for any $t \in [t_0,+\infty) \cap I$, we have $X_1(t) \geq X_2(t)$.
\end{theoremappendix}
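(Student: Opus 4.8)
The plan is to run the classical integrating-factor comparison for the matrix Riccati flow, applied to the difference $W(t):=X_1(t)-X_2(t)$. First note that both $X_i$ are symmetric, being solutions of the Riccati equation issued from symmetric data --- the only situation in which hypothesis (ii) is meaningful --- so $W$ is symmetric, and the task reduces to showing that $W(t)\geq\mathbbold{0}$ for all $t\in[t_0,+\infty)\cap I$, knowing $W(t_0)\geq\mathbbold{0}$.

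The first step I would carry out is to differentiate $W$ and separate the part that is insensitive to $W$. Writing $\mathrm{R}_i(\,\cdot\,;t)$ for the right-hand side of the $i$-th Riccati equation and inserting the mixed term $\mathrm{R}_1(X_2;t)$, one gets
\begin{equation}
\dot W=\bigl(\mathrm{R}_1(X_1;t)-\mathrm{R}_1(X_2;t)\bigr)+\bigl(\mathrm{R}_1(X_2;t)-\mathrm{R}_2(X_2;t)\bigr).
\end{equation}
In the second bracket the argument $X_2$ is frozen, so it depends only on $M_1(t)-M_2(t)$; concretely it is nothing but the value, on the graph $\{(v,X_2(t)v)\}$ of $X_2(t)$, of the quadratic form defined by $M_1(t)-M_2(t)$, whence
\begin{equation}
D(t):=\mathrm{R}_1(X_2;t)-\mathrm{R}_2(X_2;t)\geq\mathbbold{0},\qquad\forall\,t\in I,
\end{equation}
by hypothesis (i). In the first bracket the only term not already linear in $W$ is the quadratic one, which one disposes of by symmetrizing,
\begin{equation}
X_1(M_1)_{22}X_1-X_2(M_1)_{22}X_2=\tfrac12\bigl((X_1+X_2)(M_1)_{22}W+W(M_1)_{22}(X_1+X_2)\bigr),
\end{equation}
so that the first bracket becomes $\Lambda(t)W+W\Lambda(t)^*$ for a matrix $\Lambda(t)$ assembled from the blocks of $M_1(t)$ and from $\tfrac12(X_1(t)+X_2(t))$; in particular $\Lambda$ is smooth on $I$. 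Altogether $W$ solves the linear equation
\begin{equation}
\dot W=\Lambda(t)W+W\Lambda(t)^*+D(t),\qquad D(t)\geq\mathbbold{0},\qquad W(t_0)\geq\mathbbold{0}.
\end{equation}

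To conclude I would introduce the fundamental matrix $\Phi(t)$ of $\dot\Phi=-\Lambda(t)\Phi$ with $\Phi(t_0)=\mathbbold{1}$; since $\Lambda$ is smooth on all of $I$, $\Phi$ is defined and invertible on $I$. A direct computation gives
\begin{equation}
\frac{d}{dt}\bigl(\Phi(t)^*W(t)\Phi(t)\bigr)=\Phi(t)^*\bigl(\dot W-\Lambda^*W-W\Lambda\bigr)\Phi(t)=\Phi(t)^*D(t)\Phi(t)\geq\mathbbold{0},
\end{equation}
so $t\mapsto\Phi(t)^*W(t)\Phi(t)$ is non-decreasing in the Loewner order on $[t_0,+\infty)\cap I$; as it equals $W(t_0)\geq\mathbbold{0}$ at $t_0$, it remains $\geq\mathbbold{0}$, and conjugating back by the invertible $\Phi(t)^{-1}$ yields $W(t)\geq\mathbbold{0}$. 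The step I expect to require the most care is the decomposition of $\dot W$: because the Riccati nonlinearity is quadratic rather than linear, $W$ does not a priori satisfy a linear equation, and one must verify both that the symmetrization genuinely produces the affine form $\Lambda W+W\Lambda^*$ with the whole monotonicity carried by the non-negative term $D(t)$ (nothing positive-definite being hidden inside $\Lambda$), and that $\Lambda$, hence $\Phi$, does not blow up before the right endpoint of $I$ --- which is guaranteed precisely because $X_1,X_2$ are assumed to solve their equations on the common interval $I$.
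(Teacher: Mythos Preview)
The paper does not prove this statement itself; it refers to \cite[Appendix~A]{BR-comparison}. Your argument is the standard one and is essentially correct: split $\dot W$ into a part linear in $W$ and a non-negative forcing term $D(t)$, then conjugate by a fundamental solution to reduce to a monotone Lyapunov equation. The verification that $D(t)\geq\mathbbold{0}$ via the graph of $X_2$ and the symmetrization of the quadratic term are both fine.

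There is, however, a slip in the integrating-factor step. With $\dot W=\Lambda W+W\Lambda^*+D$, the choice $\dot\Phi=-\Lambda\Phi$ gives
\[
\frac{d}{dt}\bigl(\Phi^*W\Phi\bigr)=\Phi^*\bigl(\dot W-\Lambda^*W-W\Lambda\bigr)\Phi
=\Phi^*\bigl((\Lambda-\Lambda^*)W-W(\Lambda-\Lambda^*)+D\bigr)\Phi,
\]
which is not $\Phi^*D\Phi$ because $\Lambda$ is not symmetric in general. The correct choice is $\dot\Phi=-\Lambda^*\Phi$ (equivalently $\dot\Phi^*=-\Phi^*\Lambda$), after which
\[
\frac{d}{dt}\bigl(\Phi^*W\Phi\bigr)=\Phi^*\bigl(\dot W-\Lambda W-W\Lambda^*\bigr)\Phi=\Phi^*D\Phi\geq\mathbbold{0},
\]
and the conclusion follows exactly as you wrote. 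This is a one-symbol fix; the structure of the proof is sound.
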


The assumptions of Theorem~\ref{t:riccati}  involve comparison on coefficients of Riccati equations and on initial data. It can be generalised for limit initial data as follows.
\begin{theoremappendix}[Riccati comparison 2]\label{t:riccatilim}
Let $M_1(t)$, $M_2(t)$ be two smooth families of $2n\times 2n$ symmetric matrices. Let $X_i(t)$ be smooth solutions of the Riccati equation
\begin{equation}
\dot{X}_i = \mathrm{R}_i(X_i;t), \qquad i =1,2,
\end{equation}
on a common interval $I \subseteq \R$. Let $t_0 \in \overline{I}$. Assume that \emph{(i)} $M_1(t) \geq M_2(t)$ for all $t \in \overline{I}$, \emph{(ii)} $X_i(t) >0$ for $t>t_0$ sufficiently small, \emph{(iii)} there exist $Y_i(t_0):=\lim_{t\to t_0+} X_i^{-1}(t)$ and \emph{(iv)} $Y_1(t_0) \leq Y_2(t_0)$. Then, for any $t \in (t_0,+\infty) \cap I$, we have $X_1(t) \geq X_2(t)$.
\end{theoremappendix}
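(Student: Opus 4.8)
The plan is to reduce Theorem \ref{t:riccatilim} to Theorem \ref{t:riccati} by passing to the inverse matrices. By hypothesis (ii) there is $\delta>t_0$ with $(t_0,\delta]\subseteq I$ and $X_i(t)>0$ on $(t_0,\delta]$; set $Y_i(t):=X_i(t)^{-1}$ there. Differentiating $Y_i=X_i^{-1}$, using $\dot X_i=\mathrm R_i(X_i;t)$ and $X_iY_i=Y_iX_i=\mathbbold{1}$, one checks that $Y_i$ solves a matrix Riccati equation of exactly the same type,
\begin{equation*}
\dot Y_i=\widetilde{\mathrm R}_i(Y_i;t),\qquad \widetilde M_i(t):=-J\,M_i(t)\,J,\qquad J:=\begin{pmatrix}\mathbbold{0}&\mathbbold{1}\\ \mathbbold{1}&\mathbbold{0}\end{pmatrix},
\end{equation*}
with $\widetilde M_i(t)$ symmetric and smooth. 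The key point is that conjugation by the invertible symmetric matrix $J$ preserves the order of symmetric matrices while the sign change reverses it, so from (i) one gets $\widetilde M_1(t)\le\widetilde M_2(t)$ for all $t\in\overline I$.

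Next I would extend $Y_i$ up to the endpoint $t_0$. By (iii) the finite limit $Y_i(t_0):=\lim_{t\to t_0^+}Y_i(t)$ exists, so $Y_i$ is bounded near $t_0$; since $\widetilde{\mathrm R}_i(\cdot;\cdot)$ is continuous near $\bigl(Y_i(t_0),t_0\bigr)$, also $\dot Y_i$ is bounded near $t_0$, hence $Y_i$ is Lipschitz there and extends continuously to $[t_0,\delta]$ with value $Y_i(t_0)$ at $t_0$; feeding this back into the equation gives $\dot Y_i(t)\to\widetilde{\mathrm R}_i(Y_i(t_0);t_0)$ as $t\to t_0^+$, so $Y_i\in C^1([t_0,\delta])$ (hence, by bootstrap, smooth) and solves the Riccati equation on the closed interval $[t_0,\delta]$. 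Now apply Theorem \ref{t:riccati} on $[t_0,\delta]$ with base point $t_0$ to the pair $Y_2,Y_1$, whose coefficient matrices are $\widetilde M_2,\widetilde M_1$ respectively: since $\widetilde M_2\ge\widetilde M_1$ on $[t_0,\delta]$ and, by (iv), $Y_2(t_0)\ge Y_1(t_0)$, we conclude $Y_2(t)\ge Y_1(t)$ for all $t\in[t_0,\delta]$. On $(t_0,\delta]$ both matrices are positive definite, and inversion reverses the order on positive definite matrices, so $X_1(t)\ge X_2(t)$ for $t\in(t_0,\delta]$. Finally, since $X_1(\delta)\ge X_2(\delta)$, $M_1\ge M_2$ on $I$, and $\delta\in I$, a second application of Theorem \ref{t:riccati} with base point $\delta$ yields $X_1(t)\ge X_2(t)$ for all $t\in[\delta,+\infty)\cap I$; together with the case $t\in(t_0,\delta]$ already treated, this proves the assertion for all $t\in(t_0,+\infty)\cap I$.

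I expect the only genuinely delicate step to be the one-sided continuation in the second paragraph: one must verify that a solution of the Riccati equation which merely admits a finite limit at $t_0$ really does extend to a $C^1$ (hence smooth) solution on the closed interval $[t_0,\delta]$, so that Theorem \ref{t:riccati} can legitimately be invoked with $t_0$ as base point. The other ingredients — the algebraic identity recasting the equation for $X^{-1}$ as a Riccati equation, the order behaviour of $M\mapsto JMJ$ and of $P\mapsto P^{-1}$ on symmetric (positive definite) matrices, and the forward propagation of the inequality via Theorem \ref{t:riccati} — are routine.
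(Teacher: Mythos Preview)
The paper does not prove Theorem~\ref{t:riccatilim} itself; it defers the proof to \cite[Appendix~A]{BR-comparison}. Your argument is correct and is precisely the standard one used there: pass to the inverses $Y_i=X_i^{-1}$, observe that they solve a Riccati equation with coefficient matrix $\widetilde M_i=-JM_iJ$ (your computation is right), extend $Y_i$ smoothly to the endpoint $t_0$ using the assumed finite limit, apply Theorem~\ref{t:riccati} to the pair $(Y_2,Y_1)$ on $[t_0,\delta]$, invert back on $(t_0,\delta]$ using that inversion is order-reversing on positive definite matrices, and propagate forward with a second application of Theorem~\ref{t:riccati}. Nothing is missing; the continuation step you flagged as delicate is indeed routine once $M_i$ is smooth on $\overline I$, since the Riccati right-hand side is polynomial in $Y$ with coefficients continuous at $t_0$.
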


The above results can be used to prove that the Cauchy problem with limit initial condition is well posed, in the following sense.
\begin{lemmaappendix}\label{l:limit}
Let $A,B$ be constant $n\times n$ matrices, with $B\geq 0$ and satisfying the Kalman condition
\begin{equation}
\rank(B,AB,\ldots,A^{m-1}B) = n,
\end{equation}
for some $m\geq 0$. Let $R(t)$ be a smooth family of symmetric $n\times n$ matrices. Then the Cauchy problem with limit initial condition
\begin{equation}\label{eq:riccatiappendix}
\dot{V} +A^{*}V + VA +VB V + R(t)=\mathbbold{0} , \qquad \displaystyle \lim_{t\to 0^+} V(t)^{-1} = \mathbbold{0},
\end{equation}
is well-posed and admits a unique solution defined on a maximal interval $I \subseteq (0,+\infty)$. This solution is symmetric, and $V(t) >0$ for small $t>0$. Furthermore, if $R(t) =R$ is constant, then $V(t)$ is non-decreasing.
\end{lemmaappendix}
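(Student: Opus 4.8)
The plan is to trade the Riccati Cauchy problem for the associated linear Hamiltonian system, in which the limit condition becomes an ordinary initial condition. Let $(M(t),N(t))$ be the unique solution — smooth and globally defined on $[0,+\infty)$ by linearity — of the system \eqref{eq:mainequation} of Lemma~\ref{l:subspacesriccati} with the given constant matrices $A,B$ and the given smooth $R(t)$, and with the vertical initial datum
\[
\begin{pmatrix} M(0) \\ N(0) \end{pmatrix} = \begin{pmatrix} \mathbbold{1} \\ \mathbbold{0} \end{pmatrix}.
\]
On any interval where $N(t)$ is invertible, $V(t):=M(t)N(t)^{-1}$ solves \eqref{eq:riccatiappendix} (this is the computation already recorded in Lemma~\ref{l:subspacesriccati}), and $V(t)^{-1}=N(t)M(t)^{-1}\to\mathbbold{0}$ as $t\to 0^+$ since $M(0)=\mathbbold{1}$, $N(0)=\mathbbold{0}$; conversely, every solution of \eqref{eq:riccatiappendix} with the limit datum arises this way on its interval of definition, by uniqueness for linear ODEs. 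Thus the whole statement reduces to understanding the set $\{t>0 : \det N(t)\neq 0\}$.

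The crux is that the Kalman condition forces $N(t)$ to be invertible for all sufficiently small $t>0$; equivalently, the first conjugate time of the linear system is strictly positive. When $R\equiv\mathbbold{0}$ one has explicitly $N(t)=W(t)e^{-tA^{*}}$ with $W(t)=\int_{0}^{t}e^{sA}Be^{sA^{*}}\,ds$ the controllability Gramian of $(A,B)$, which is positive definite for $t>0$ precisely under the rank condition. For general smooth $R$ the additional term is formally of higher order, but because the eigenvalues of $W(t)$ vanish at different polynomial rates — dictated by the controllability indices of $(A,B)$ — a crude perturbation estimate is not enough; the correct argument rescales the linear system at $t=0$ along a filtration adapted to those indices, producing a rescaled solution that converges to an invertible limit, and showing along the way that $N(t)>0$, hence $V(t)=M(t)N(t)^{-1}>0$, for small $t>0$. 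I expect this to be the main obstacle; it is carried out in \cite[Appendix~A]{BR-comparison} (compare \cite[Ch.\ 16]{Agrachevbook}), and can also be obtained ``softly'' from the fact that, before the first conjugate time, $V(t)$ is the positive-definite Hessian of the optimal cost of the LQ-type problem with time horizon $t$.

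Granting this, let $I=(0,T)$ be the connected component of $\{t>0:\det N(t)\neq 0\}$ accumulating at $0$; since $\|V(t)\|\to\infty$ as $t\to T^-$, this $I\subseteq(0,+\infty)$ is the maximal interval on which the Riccati solution is defined. Symmetry of $V(t)$ comes from the linearized picture: the $n$-plane spanned by the columns of the Jacobi matrix is the image under the symplectic Hamiltonian flow of the vertical Lagrangian plane $\{(v,\mathbbold{0})\}$, hence stays Lagrangian, and by Lemma~\ref{l:subspacesriccati} this is equivalent to $V(t)=V(t)^{*}$. For uniqueness, if $V_{1},V_{2}$ both solve \eqref{eq:riccatiappendix} with the limit datum, then both are positive for small $t>0$ and both satisfy $\lim_{t\to 0^+}V_{i}^{-1}=\mathbbold{0}$, so two applications of the limit comparison Theorem~\ref{t:riccatilim} with $M_{1}=M_{2}$, exchanging the roles of $V_{1}$ and $V_{2}$, give $V_{1}\geq V_{2}$ and $V_{2}\geq V_{1}$, i.e.\ $V_{1}=V_{2}$ near $0$ and therefore on all of $I$ by ODE uniqueness. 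This is the asserted well-posedness.

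Finally, for constant $R$ the equation \eqref{eq:riccatiappendix} is autonomous, so for every $h>0$ the time-translate $V_{h}(t):=V(t+h)$ solves the same Riccati equation on a shifted interval. Since $\lim_{t\to 0^+}V(t)^{-1}=\mathbbold{0}$ while $\lim_{t\to 0^+}V_{h}(t)^{-1}=V(h)^{-1}$, with $V(h)>0$ for $h$ small, Theorem~\ref{t:riccatilim} applied to the pair $V$, $V_{h}$ (again with $M_{1}=M_{2}$, and $Y_{1}(0)=\mathbbold{0}\leq V(h)^{-1}=Y_{2}(0)$) compares $V(t)$ with $V(t+h)$ for $t,t+h\in I$; letting $h$ vary and propagating the comparison along $I$ gives the monotone dependence of $V$ on $t$ asserted in the statement.
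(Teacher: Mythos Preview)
The paper does not prove this lemma: the sentence preceding Theorem~\ref{t:riccati} sends the reader to \cite[Appendix~A]{BR-comparison} for the proofs of all the Riccati facts in Appendix~\ref{s:riccati}, including Lemma~\ref{l:limit}. Your sketch follows exactly that standard route---pass to the linear Hamiltonian pair $(M,N)$ with vertical initial datum, use the Kalman condition to get $N(t)$ invertible for small $t>0$, read off symmetry from the Lagrangian property, and get uniqueness and monotonicity from Theorem~\ref{t:riccatilim}---and you even cite the same source for the one genuinely delicate step (invertibility of $N$ near $0$). So your approach and the paper's (deferred) proof coincide.

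Two small remarks. First, in your uniqueness step you assert that any competing solution $V_2$ with $V_2^{-1}\to\mathbbold{0}$ is automatically positive for small $t>0$; this is not immediate from the limit condition alone (eigenvalues of $V_2$ could a priori diverge to $-\infty$), and the cleaner argument is the one you allude to in the existence part: lift $V_2$ back to an $n$-plane in $\R^{2n}$, observe that the limit condition forces that plane to coincide with the vertical at $t=0$, and invoke uniqueness for the linear system. Second, your time-shift argument correctly yields $V(t)\geq V(t+h)$, i.e.\ $V$ is non-\emph{increasing}; this is the right conclusion (already in the scalar model $\dot V+V^2=0$, $V(t)=1/t$), and the word ``non-decreasing'' in the statement of Lemma~\ref{l:limit} appears to be a typo.
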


\bibliographystyle{alphaabbrv}
\bibliography{SR-Bakry-Emery-biblio}

\end{document}